\documentclass[10pt]{amsart}
\usepackage[utf8]{inputenc} 
\usepackage[T1]{fontenc}
\usepackage{amsfonts,amscd}
\usepackage[a4paper, margin=1.25in]{geometry}
\usepackage[parfill]{parskip}
\usepackage{xcolor}
\usepackage{comment}
\usepackage{amssymb,latexsym,amsmath,amsthm,enumitem,mathrsfs}
\usepackage{fullpage}
\usepackage{fancyhdr}
\usepackage{hyperref}
\usepackage{diagbox}
\usepackage{array}
\usepackage{tikz}
\usetikzlibrary{arrows}
\usetikzlibrary{positioning}
\usepackage{float}\usepackage{mathtools} 
\usepackage{dsfont}
\usepackage[titletoc,title]{appendix}

\newtheorem{theorem}{Theorem}
\newtheorem{thm}[theorem]{Theorem}  

\newtheorem{prop}[theorem]{Proposition}
\newtheorem{lemma}[theorem]{Lemma}
\newtheorem{remark}[theorem]{Remark}
\newtheorem{cor}[theorem]{Corollary}

\newtheorem{example}[theorem]{Example}

\mathtoolsset{showonlyrefs}

\def\Z{\mathbb Z}

\def\R{\mathbb R}

\sloppy
\allowdisplaybreaks
\begin{document}
\title{The metric theory of small gaps for a sequence of real numbers}
\author[Jewel]{Jewel Mahajan}
\address{Jewel Mahajan, National Institute of Science Education and Research (NISER) Bhubaneswar, Jatni, Padanpur, Odisha 752050}
\email{jewelmahajan421@gmail.com}
\keywords{Metric Diophantine approximation, Duffin–Schaeffer conjecture, minimal gap
of sequences mod $1$, difference set.}
\subjclass[2020]{Primary 11B05, 11J71, 11J83, 11M06; Secondary 11K06,
11K38}

\begin{abstract} 
Let $(a_n)_{n \geq 1}$ be a sequence of distinct positive integers. The metric theory of minimal gaps for the sequence $\{\alpha a_n \text{ mod }1, 1\leq n \leq N\}$ as $N \to \infty$ was initiated by Rudnick~\cite{Rudnick}, who established that the minimal gap admits an asymptotic upper bound expressible in terms of the additive energy of $\{a_1,\ldots,a_N\}$ for almost every $\alpha$. Later, Aistleitner, El-Baz, and Munsch \cite{ABM} demonstrated that the metric theory of minimal gaps for such sequences is not governed by the additive energy, but by the cardinality of the difference set of $\{a_1,\ldots,a_N\}$. They established a sharp convergence test for the typical asymptotic order of the minimal gap and proved general upper and lower bounds that are readily applicable. A key element of their proof relies on the resolution of the Duffin--Schaeffer conjecture by Koukoulopoulos and Maynard.
In this article, we generalise several results from~\cite{ABM} on \emph{integer} sequences to the case of \emph{real} sequences. Although an upper bound for $\delta_{\min}^{\alpha}(N)$ remains elusive, we obtain one for its floored counterpart $\lfloor \delta^{\alpha}_{\min} \rfloor (N)$ (both defined in Section~\ref{Introduction}) for real sequences $(a_n)_{n \geq 1}$ of distinct numbers. Our theorems recover \cite[Theorems 1--3]{ABM}, as well as the result from \cite[Section 4.3]{ABM}. Furthermore, we establish lower bounds for the minimal gaps of well-spaced sequences and, more generally, of a broader family that contains them.   
\end{abstract}
\maketitle

\section{Introduction}\label{Introduction}
The theory of uniform distribution modulo one is a fundamental area at the intersection of number theory, ergodic theory, and metric Diophantine approximation. A central problem within this field is to understand the local statistics of the sequences $(\alpha a_n)_{n \geq 1}$, where $\alpha$ is a real parameter and $(a_n)_{n \geq 1}$ is a given sequence of distinct real numbers.
A key statistic capturing the finest scale behaviour is the \emph{minimal gap}, defined for $N$ terms as
\begin{align}
   \delta_{\min}^{\alpha}(N) = \min \{ \|\alpha(a_m - a_n)\| : 1 \leq m \neq n \leq N \},
\end{align}
where $\|x\|=\displaystyle\min_{k \in \Z} |x-k|$.
The behaviour of $\delta_{\min}^{\alpha}(N)$ as $N \to \infty$ for generic $\alpha$ (that is, for almost all $\alpha$ with respect to the Lebesgue measure) reveals the deep arithmetic properties of the sequence $(a_n)_{n \geq 1}$. As direct computation of the minimal gap $\delta_{\min}^{\alpha}(N)$ for fixed $\alpha$ is typically difficult, we analyse its metric properties, focusing on the almost-sure behaviour and distribution.

The expected behaviour of the minimal gap serves as a powerful tool for distinguishing random from deterministic sequences. The minimal gap for a random sequence is always at most $1/N$, as $1/N$ is the average spacing. For a sequence of $N$ independent and uniformly distributed random points in the unit interval, the minimal gap is almost surely of size about \(1/N^2\), a hallmark of Poissonian statistics where points exhibit no correlations. This random benchmark provides a crucial baseline: a deterministic sequence is considered to exhibit Poissonian behaviour if its minimal gap also decays as $1/N^2$ as in the random case. 

In contrast to the random case, more structured sequences, such as $( \sqrt{n})_{n \notin \square}$, are more rigidly spaced, resulting in a larger minimal gap that decays more slowly (see \cite{Regavim}), thus signifying a clear departure from random behaviour. The minimal gap in this case is known to be of the order $2/N^{3/2}$, which is much larger than the random scaling of $1/N^2$. Since polynomial sequences $(n^d)_{n \geq1}$ for  $d \geq 2$ and lacunary sequences have nearly minimal additive energy (close to  $N^2$), the minimal gap $\delta_{\min}^{\alpha}(N)$ is of order $1/N^2$ up to sub-power factors. This follows from the following results of Rudnick \cite[Theorems 1--2]{Rudnick}.

Let $(a_n)_{n \geq 1}$ be a sequence of distinct integers, and let its additive energy be nearly minimal, that is, $\ll N^{2+o(1)}$. Then, for almost all $\alpha$, any $\eta > 0$ , and for all sufficiently large  $N$, we have
\begin{align}
\frac{1}{N^{2+\eta}} < \delta_{\min}^{\alpha}(N) < \frac{1}{N^{2-\eta}}.    
\end{align}
The landmark work of Aistleitner, El-Baz, and Munsch \cite{ABM} established that the metric theory of minimal gaps for integer sequences is not controlled by the additive energy, but rather by the cardinality of the difference set $\{a_i -a_j: 1\leq i  \neq j \leq N\}$, providing a comprehensive framework that connects the almost-sure minimal gap of $(\alpha a_n)_{n \geq 1}$ directly to its difference set structure. This article significantly extends their line of research to sequences $(a_n)_{n \geq 1}$ of arbitrary distinct positive real numbers. We introduce and study several natural variants of the minimal gap $\delta_{\min}^{\alpha}(N)$, namely $\lfloor \delta_{\min}^{\alpha} \rfloor(N)$, $\widetilde{\delta}_{\min}^{\alpha}(N)$, and $\widehat{\delta}_{\min}^{\alpha}(N)$ (defined in Sections~\ref{Introduction} and~\ref{minimal gap statistics}). A primary aim is to understand the relationships between these quantities and to determine their typical asymptotic order for almost all $\alpha$.

Our main results are multifaceted. First, we establish a general lower bound and a weak upper bound for $\delta_{\min}^{\alpha}(N)$ under the mild assumption that the sequence $(a_n)_{n \geq 1}$ has a uniform positive lower bound on its gaps, $\displaystyle\inf_{m \neq n} |a_m - a_n| \geq c > 0$. These bounds are expressed in terms of $D_N $, defined as the number of distinct positive differences between $\{a_1,\ldots,a_N\}$, and feature precise logarithmic factors endemic to such metric problems.

Second, we prove a more refined theorem where the constant lower bound $c$ is replaced by a monotonically decreasing function $c(N)$, allowing the treatment of sequences whose terms can cluster. This provides a framework for handling a wider class of sequences, including polynomial sequences with exponents less than $1$.

Third, we establish several upper bounds for $\lfloor \delta_{\min}^{\alpha} \rfloor(N)$, where 
    \begin{align}
       \lfloor \delta_{\min}^{\alpha} \rfloor(N) = \min \left\{ \| \alpha \lfloor |a_m - a_n| \rfloor \| : 1 \leq m \neq n \leq N \right\},
    \end{align}
using the powerful machinery of the Duffin-Schaeffer conjecture, recently resolved by Koukoulopoulos and Maynard (see \cite{KoukouMaynard}). These include three types of bounds: one that holds for infinitely many $N$ and depends on $H_N$ (the cardinality of the set of floored positive differences) and the size of $a_N$, another that depends solely on $H_N$, and a third bound in terms of $H_N$ that is valid for all sufficiently large $N$.

Finally, for any given sequence $(\eta(N))_{N \geq 1}$ of non-negative reals, we present a complete zero-one law that characterises whether the inequality $\lfloor \delta_{\min}^{\alpha} \rfloor(N) \leq \eta(N)$ holds for infinitely many $N$ and for almost no or almost all $\alpha$. This theorem provides a unifying criterion that recovers and generalises previous results.

Our work demonstrates that the fundamental principles governing the fine-scale statistics of sequences, previously understood primarily in the integer setting, have powerful extensions to the realm of real sequences. The results have applications to lacunary sequences, Beatty sequences, and sequences defined by smooth, sufficiently monotonic functions, several examples of which are explored in detail in later sections.

The investigation of sequences modulo one sits at the rich confluence of number theory, ergodic theory, and probability. A sequence \((a_n)_{n \geq 1}\) in \([0,1)\) is considered uniformly distributed if, in the limit, the proportion of its terms that fall into any subinterval is equal to the length of the interval. This ``global'' equidistribution, a concept firmly established since the pioneering work of Weyl \cite{Weyl1916}, is elegantly quantified by discrepancy theory, which measures the maximum deviation from this ideal distribution \cite{MichaelTichy, kuipers1974uniform}.

However, global uniformity often masks a more complex microscopic reality. Two sequences can be perfectly uniformly distributed, yet exhibit radically different local behaviours. The true arithmetic character of a sequence is frequently revealed not by its global distribution but by its \emph{local statistics}, the fine-scale patterns such as the distribution of gaps between consecutive points and the correlation between the spacings of multiple points. The study of these statistics, including pair correlation and nearest-neighbour gap distributions, has proven to be a remarkably sensitive probe. It forges deep and often unexpected connections to diverse fields such as random matrix theory, quantum chaos, and the metric Diophantine approximation, providing a unifying language for describing randomness and structure in deterministic systems.

Although local statistics of integer sequences have been extensively studied (see \cite{ABM2021, Aistleitner-Larcher-Lewko, Bloom-Chow, Rudnick, Rudnick-Sarnak}), with important work on the distribution of spacings and pair correlation for sequences like $(n^2\alpha)_{n \geq 1}$ \cite{RudnickSarnakAlexandru2001} and other lacunary sequences \cite{RudnickAlexandru1999, RudnickAlexandru2002}, the natural and vast generalisation to sequences of arbitrary real numbers has remained largely unexplored. This gap is significant, as many sequences of central importance in analysis and mathematical physics---such as lacunary sequences, values of smooth functions, and geometric progressions with non-integer ratios---reside firmly within this real-valued realm. A comprehensive theory for real sequences is, therefore, not merely a technical extension but a necessary step for a complete understanding.

This article is motivated by the goal of establishing such a theory for the minimal gap statistic. We seek to generalise the seminal findings of Aistleitner, El-Baz, and Munsch \cite{ABM} for integer sequences, which tied the minimal gap to the cardinality of the difference set, into the domain of real sequences. By introducing and analysing new, natural variants of the minimal gap tailored for real numbers, we aim to uncover the fundamental principles that govern their fine-scale distribution. Our work provides a unified framework that not only recovers classical results but also opens the door to analysing a much broader and richer class of sequences, thereby deepening our understanding of the interplay between arithmetic structure and random-like behaviour.

\subsection{Organisation of the article}
The article is structured as follows. In Section~\ref{main results}, we summarise all main results of this article without proofs. We also highlight key corollaries and provide a preview of applications to various sequence families.

Section~\ref{minimal gap statistics} is dedicated to the study of minimal gap statistics for sequences of distinct positive real numbers. We begin by defining four key metrics: $\delta_{\min}^{\alpha}(N)$, $\widetilde{\delta}_{\min}^{\alpha}(N)$, $\widehat{\delta}_{\min}^{\alpha}(N)$, and $\lfloor \delta_{\min}^{\alpha} \rfloor(N)$. We then establish the fundamental chain of inequalities $\delta_{\min }^{\alpha}(N)\leq \widetilde{\delta}_{\min }^{\alpha}(N)\leq \widehat{\delta}_{\min }^{\alpha}(N)$. The core of this section is the proof of Theorem~\ref{thm:min_spacing}, which provides nearly optimal lower and upper bounds for $\delta_{\min}^{\alpha}(N)$ for sequences with uniform positive separation $\displaystyle\inf_{m\neq n} |a_m - a_n| \geq c > 0$. The proof employs a careful application of the Borel–Cantelli lemma. We also  survey relevant prior results, particularly those of Aistleitner, El-Baz, and Munsch for integer sequences (see \cite{ABM}).

We then generalise this result in Theorem~\ref{thm:main} to sequences where the minimal separation $\displaystyle\inf_{1\leq m \neq n \leq N} |a_m - a_n|$ is bounded below by a monotonically decreasing sequence. A novel and careful enumeration of the difference set is introduced to handle this more delicate situation. 
A key consequence, detailed in Corollary~\ref{cor:main}, recovers and extends known results, such as Rudnick's theorem, by showing that for sequences with a fixed positive separation, one has $\delta_{\min}^{\alpha}(N) \gg 1 / N^{2+\eta}$ for any $\eta > 0$ and for almost all $\alpha$.

In Section~\ref{Proof of Theorem 6.4 upper bound}, we establish several key upper bounds for the metric $\lfloor \delta_{\min}^{\alpha} \rfloor(N)$. First, we prove a general upper bound in Theorem~\ref{upper bound}. Subsequently, we refine this analysis to prove Theorem~\ref{upper bound independent of a_N}, which provides an upper bound for $\lfloor \delta_{\min}^{\alpha} \rfloor(N)$ that holds for infinitely many $N$ and, crucially, depends only on the counting function $H_N$. A central component of this proof is a direct application of the celebrated Koukoulopoulos--Maynard theorem (Theorem~\ref{Duffin-Schaeffer Conjecture}). Finally, in Section~\ref{Subsection:upper bound in terms of H_N for sufficiently large N}, we establish Theorem~\ref{upper bound in terms of H_N for sufficiently large N}, which gives an upper bound for $\lfloor \delta_{\min}^{\alpha} \rfloor(N)$ in terms of $H_N$ that is valid for all sufficiently large $N$.

In Section~\ref{Proof of theorem three}, we present a complete zero-one law characterisation for $\lfloor \delta_{\min}^{\alpha} \rfloor(N) \leq \eta(N)$ to occur infinitely often in Theorem~\ref{THM 3}. The proofs of Lemma~\ref{lemma 3}, Theorem~\ref{upper bound independent of a_N}, and Theorem~\ref{THM 3} build upon the corresponding arguments in \cite{ABM} for integer sequences, incorporating key modifications developed in previous lemmas to adapt to our setting. We provide detailed proofs for completeness

\section{Main Results}\label{main results}
Before stating the main results, we collect the key notation and definitions used throughout the article.
\subsection{Notation and definitions}\label{Notation}
\begin{itemize}
    \item $\lfloor x \rfloor$: The floor of a real number $x$, denoting the greatest integer less than or equal to $x$.
    \item $\{x\}$: The fractional part of a real number $x$, defined by $\{x\} = x - \lfloor x \rfloor$.
    \item $\|x\|$: The distance from $x$ to the nearest integer. For $x \in \mathbb{R}$, $\|x\| = \min(\{x\}, 1 - \{x\})$.
    \item $\log x$: $\max(1, \log x)$, where $\log$ is the natural logarithm.
    \item $\log_k x$: The $k$-th iterated logarithm, defined recursively by $\log_1 x = \log x$ and $\log_k x = \log(\log_{k-1} x)$ for $k \geq 2$. Thus, $1/\log n$, $ \log_{2}n$, $\log_{3}n$, etc. are positive and well-defined for all integers $n \geq 1$.
    \item $f(x) \ll g(x)$: $|f(x)| \leq C |g(x)|$ for some constant $C > 0$ and for all $x$ in a specified domain. Equivalently, $f(x) = O(g(x))$. We write $g(x) \gg f(x)$ to mean $f(x) \ll g(x)$.
    \item $f(x) = o(g(x))$: $\frac{f(x)}{g(x)} \to 0$ as $x \to \infty$.
    \item $\nu_p(n)$: The $p$-adic valuation of $n \in \mathbb{N}$, that is, the largest integer $k$ such that $p^k$ divides $n$.
    \item $A^+$: The set of all positive elements of a set $A$.
    \item $\lfloor A \rfloor$: For any subset $A \subseteq \mathbb{R}$, $\lfloor A \rfloor = \{\lfloor a \rfloor : a \in A\}$.
    \item $A_N$: The finite set of the first $N$ terms, $A_N = \{a_1, a_2, \dots, a_N\}$.
    \item $A_N - A_N$: The entire difference set, $\{a_m - a_n : 1 \leq m, n \leq N\}$.
    \item $(A_N - A_N)^+$: The set of positive differences, $\{|a_m - a_n| : 1 \leq m \neq n \leq N\}$.
    \item $\lfloor(A_N - A_N)^+ \rfloor$: The set of floored positive differences. 
    \item $ (\lfloor A_N - A_N \rfloor)^+$: The set of positive floored differences.
    \item $C_N$,\, $D_N$:  $C_N = \#(A_N - A_N)$, \quad $D_N = \#(A_N - A_N)^+$.
    \item $H_N$,\, $T_N$:  $H_N = \#\lfloor(A_N - A_N)^+ \rfloor$, \quad $T_N = \#(\lfloor A_N - A_N \rfloor)^+$.
    \item $\mathcal{N}(k)$: The first index $M$ at which the positive integer $k$ appears in the set of floored positive differences, defined by
    \begin{align}
    \mathcal{N}(k) = \min \left\{ M \geq 1 : k \in \lfloor (A_M - A_M)^+ \rfloor \right\}.
    \end{align}
    If $k \notin \cup_{N\geq 1}\lfloor (A_N - A_N)^+ \rfloor$, we set $\mathcal{N}(k) = \infty$ and adopt the convention that the supremum of the empty set is zero.
    \item Almost all $\alpha$: A property that holds for all $\alpha$ in a subset of $[0,1]$ with the Lebesgue measure $1$.
    \item $\mathds{1}_E(*)$: The indicator function of $E$, that is, $\mathds{1}_E(x)$ is equal to 1 if $x \in E$ and 0 otherwise. We adopt the convention that $\mathds{1}(x=0)$ denotes $\mathds{1}_{\{0\}}(x)$.

\end{itemize}
Equipped with this notation, we now state our main results. This work establishes new results on the fine-scale statistics of sequences $(\alpha a_n)_{n \geq 1}$ for generic $\alpha$. Our theorems provide comprehensive bounds, both lower and upper, that hold for infinitely many $N$ and for all sufficiently large $N$, along with a precise zero-one law for the minimal gap and its variants. The results reveal how the distribution of gaps depends on the arithmetic structure of the sequence through the cardinalities $D_N$, $H_N$, and related quantities.
\subsection{General lower and upper bounds}
Our first main result provides nearly optimal lower and upper bounds for the minimal gap $\delta_{\min}^{\alpha}(N)$ for sequences with a positive uniform separation. The limits are given in terms of the quantity $D_N $, the number of distinct positive differences between the first $N$ terms.

\begin{thm}\label{thm:min_spacing}
    Let $(a_{n})_{n \geq 1}$ be a sequence of distinct positive real numbers such that $\displaystyle\inf_{ m\neq n}|a_m -a_n| \geq c$ for some positive constant $c$. Let $\epsilon>0$, and let $D_N$ be defined as in Section~\ref{Notation}.  Then, for almost all $\alpha \in [0,1]$, and for all sufficiently large $N$ $($that is, there exists \( N_0(\alpha) \) such that for all \( N \geq N_0(\alpha) \)$)$, we have 
    
$\textbf{(a)}$ $$\delta_{\mathrm{min}}^{\alpha}(N)\geq \frac{1}{D_{N} \log N (\log_{2}N)^{1+\epsilon}},\text{ and}$$ 

$\textbf{(b)}$ $$\delta_{\mathrm{min}}^{\alpha}(N)\leq 1-\frac{1}{D_{N} \log N (\log_{2}N)^{1+\epsilon}}.$$
\end{thm}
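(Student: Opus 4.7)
The plan is to prove (a) by Borel--Cantelli thinned along a dyadic subsequence in $D_N$, as a naive union bound over all $N$ is not summable. Part (b) will then follow essentially for free from $\|\cdot\|\leq 1/2$. First, I would establish the per-difference estimate: for $d\geq c$, the set $\{\alpha\in[0,1]:\|\alpha d\|<\psi\}$ is a union of at most $\lfloor d\rfloor+2$ arcs of length $\leq 2\psi/d$, so its Lebesgue measure is at most $C(c)\psi$ for a constant $C(c)$ depending only on $c$. Summing over the $D_N$ distinct positive differences in $(A_N-A_N)^+$ via the union bound yields
\[
\mathrm{meas}\{\alpha\in[0,1]:\delta_{\min}^\alpha(N)<\psi_N\}\;\leq\;C(c)\,D_N\psi_N\;=\;\frac{C(c)}{\log N\,(\log_2 N)^{1+\epsilon}},
\]
where $\psi_N:=1/\bigl(D_N\log N(\log_2 N)^{1+\epsilon}\bigr)$. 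Since $\sum_N 1/(\log N(\log_2 N)^{1+\epsilon})$ diverges, I cannot apply Borel--Cantelli directly.

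To sidestep this, I would take $N_k:=\max\{N:D_N\leq 2^k\}$, which is well-defined and tends to infinity because $D_N\geq N-1$ and $D_N\to\infty$. For $N\in(N_{k-1},N_k]$ one has $D_N\in(2^{k-1},2^k]$, and both $\psi_N$ and $\delta_{\min}^\alpha(N)$ are non-increasing in $N$. The failure of (a) anywhere on this whole block is therefore contained in the single event
\[
G_k:=\Bigl\{\alpha:\delta_{\min}^\alpha(N_k)<\Phi_k\Bigr\},\qquad \Phi_k:=\frac{1}{2^{k-1}\log N_{k-1}(\log_2 N_{k-1})^{1+\epsilon}},
\]
since outside $G_k$ one has $\delta_{\min}^\alpha(N)\geq\delta_{\min}^\alpha(N_k)\geq\Phi_k\geq\psi_N$ for every $N$ in the block. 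The per-difference bound and $D_{N_k}\leq 2^k$ combine to give $\mathrm{meas}(G_k)\ll 1/\bigl(\log N_{k-1}(\log_2 N_{k-1})^{1+\epsilon}\bigr)$.

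The last ingredient is a lower bound on $N_k$. From $D_{N_k+1}>2^k$ together with the trivial $D_N\leq\binom{N}{2}<N^2/2$, one obtains $N_k\gg 2^{k/2}$, so $\log N_k\gg k$. This turns the previous bound into $\mathrm{meas}(G_k)\ll 1/\bigl(k(\log k)^{1+\epsilon}\bigr)$, which is summable, and Borel--Cantelli then produces (a). Part (b) is immediate from the convention $\|x\|\leq 1/2$: once $\psi_N\leq 1/2$, which holds for all sufficiently large $N$ since $D_N\to\infty$, one has $\delta_{\min}^\alpha(N)\leq 1/2\leq 1-\psi_N$ unconditionally. (If a parallel probabilistic argument is preferred, the same Borel--Cantelli scheme applies verbatim to the symmetric event $\{\alpha:\|\alpha d\|>1-\psi\}$, with an identical measure estimate.) The main obstacle is the design of the subsequence $(N_k)$ and the verification $\log N_k\gg k$; once those are in place, the rest of the argument is routine bookkeeping.
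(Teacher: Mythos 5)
Your proof is correct, and it takes a genuinely different route from the paper's.

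The paper enumerates the positive differences as a single sequence $(z_n)_{n\geq 1}$ with $\{z_n:1\leq n\leq D_N\}=(A_N-A_N)^+$, attaches to the $n$-th difference a threshold $\psi(n)\asymp 1/(n\log n(\log_2 n)^{1+\epsilon})$, and obtains summability over $n$ immediately; the Borel--Cantelli conclusion $\|\alpha z_n\|\geq\psi(n)$ for large $n$ is then converted into a bound in terms of $N$ by using $n\leq D_N$, the monotonicity of $\psi$, and $D_N\leq N^2$ (which is why $\log\sqrt{D_N}\leq\log N$ appears). You instead sum over $N$ directly, observe (correctly) that the resulting union bound $\asymp 1/(\log N(\log_2 N)^{1+\epsilon})$ is not summable, and repair this by dyadic pigeonholing along $D_N$: grouping $N$ into blocks where $D_N\in(2^{k-1},2^k]$, using the monotonicity of both $\delta^\alpha_{\min}(N)$ and $\psi_N$ to reduce each block to a single test event $G_k$ at the block endpoint $N_k$, and then using the trivial $D_N\leq N^2/2$ to get $\log N_{k-1}\gg k$, which makes $\sum_k\lambda(G_k)\ll\sum_k 1/(k(\log k)^{1+\epsilon})<\infty$. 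Both arguments are sound; the paper's difference-indexed enumeration is somewhat more supple (it is reused essentially verbatim to prove Theorem~\ref{thm:main} with a decaying $c(N)$, where one needs fine control on which block each $z_n$ first appears in), while yours is a clean, self-contained instance of the standard thinning-plus-Borel--Cantelli scheme with no need to pick an enumeration. Your treatment of part (b) is sharper than the paper's as an argument for the statement as literally written: since $\delta^\alpha_{\min}(N)\leq 1/2$ always and $1-\psi_N\geq 1/2$ once $D_N\log N(\log_2 N)^{1+\epsilon}\geq 2$, (b) is indeed deterministic for large $N$ (the paper's more elaborate argument actually establishes the stronger statement $\{\alpha z_n\}\geq\psi(n)$, which gives a lower bound on the fractional-part variant $\widetilde\delta^\alpha_{\min}$ as well, though that is not what part (b) claims). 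Two small points worth flagging in a written version: $N_k$ need not be strictly increasing (blocks can be empty), which is harmless since the union of blocks still covers all large $N$; and the per-difference measure estimate should be stated as $\min(C(c)\psi,1)$ to cover $\psi\geq 1/2$, though this never matters once $k$ is large.
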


\begin{remark}
  Since $D_N \leq C_N$, the statement of Theorem~\ref{thm:min_spacing} also holds when $D_N$ is replaced by $C_N$.
\end{remark} 

\begin{remark} 
In particular, condition $\displaystyle\inf_{m \neq n} |a_m - a_n| \geq 1$ is automatically satisfied for any sequence $(a_n)_{n \geq 1}$ of distinct positive integers. Therefore, we recover the second part of \cite[Theorem~1]{ABM}. 
\end{remark}

\begin{remark}
    For any $\epsilon >0$, we have $\frac{\log_{2}N}{\log{N}}+(1+\epsilon)\frac{\log_{3}N}{\log{N}}=o(1)$ as $N\to \infty$. Corresponding to a fixed $\eta>0$, there exists $N_0^{'}(\eta)$ such that $\frac{\log_{2}N+\log_{3}N^{(1+\epsilon)}}{\log{N}}<\eta $, that is, $ \log N\log_{2}N^{(1+\epsilon)}<N^{\eta }$, for all $N\geq N_{0}^{'}(\eta)$. Let $N_{0}^{'}(\alpha)$ be such that Theorem~\ref{thm:min_spacing} holds for all $N$ larger than $N_{0}^{'}(\alpha)$. Define $N_{0}(\alpha):= \max(N_{0}^{'}(\alpha),N_{0}^{'}(\eta))$. Since $ D_N \leq C_N \leq N^{2}$, we have 
    \begin{align}
         \delta_{\mathrm{min}}^{\alpha}(N)\geq \frac{1}{D_N \log N (\log_{2}N)^{1+\epsilon}}
         \geq \frac{1}{  N^{2}}\frac{1}{  N^{\eta}}= \frac{1}{  N^{2+\eta}}
    \end{align}
    for almost all $\alpha \in [0,1]$ and for every $ N \geq N_{0}(\alpha)$.
    
In other words, for each $\eta >0$ and for almost all $\alpha \in [0,1]$, we have $\delta_{\mathrm{min}}^{\alpha}(N)  \geq   \frac{1}{  N^{2+\eta}}$
    for every $ N \geq N_{0}(\alpha)$. Thus, we recover \cite[Theorem 1]{Rudnick} for a sequence of distinct real numbers.  
\end{remark}

\subsection{Refined bounds for sequences with decaying gaps}
We next present a refinement of Theorem~\ref{thm:min_spacing} that applies to sequences where the minimal gap between distinct terms is allowed to decay to zero at a controlled rate.

\begin{thm}\label{thm:main}
Let \((a_n)_{n \geq 1}\) be a sequence of distinct positive real numbers such that $\displaystyle\inf_{\substack{1\leq  m \neq n \leq N}} |a_m - a_n| \geq c(N)$,
where $(c(N))_{N\geq 1}$ is a monotone decreasing sequence of positive reals. Let $\epsilon>0$, and let $D_N$ be defined as in Section~\ref{Notation}. Then, for almost all \( \alpha \in [0,1] \), and for all sufficiently large $N$, we have

\textbf{(a)}  \(\delta^{\alpha}_{\min}(N) \geq \dfrac{c(N)}{D_N \log N (\log_2 N)^{1+\epsilon}}\), \text{ and} 

\textbf{(b)}  \(\delta^{\alpha}_{\min}(N) \leq 1 - \dfrac{c(N)}{D_N \log N (\log_2 N)^{1+\epsilon}}\).
\end{thm}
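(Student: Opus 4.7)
The plan is to adapt the Borel--Cantelli argument used in the proof of Theorem~\ref{thm:min_spacing}, addressing the fact that for a real difference $d$ the Lebesgue measure bound $\mu\{\alpha\in[0,1]: \|\alpha d\|<\psi\}\leq 2\psi(1+1/d)$ degenerates when $d$ is close to $c(N)$. The key idea is to split the set of positive differences $(A_N-A_N)^+$ into \emph{large} differences $d\geq 1/2$, which are handled probabilistically as in the integer case, and \emph{small} differences $d<1/2$, for which a deterministic pointwise lower bound is available.

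Set $\psi(N)=c(N)/\bigl(D_N \log N (\log_2 N)^{1+\epsilon}\bigr)$. For the large differences, I will apply the first Borel--Cantelli lemma along a dyadic subsequence $(N_k)$ defined by $D$-doubling, $N_{k+1}=\min\{N>N_k:D_N\geq 2 D_{N_k}\}$. Any $d\geq 1/2$ satisfies $\mu\{\alpha:\|\alpha d\|<\psi(N_k)\}\leq 6\psi(N_k)$, and since $D_{N_{k+1}-1}<2D_{N_k}$, summing over $d\in(A_{N_{k+1}-1}-A_{N_{k+1}-1})^+$ with $d\geq 1/2$ produces total measure at most $12\, c(N_k)/\bigl(\log N_k (\log_2 N_k)^{1+\epsilon}\bigr)$. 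This series is summable in $k$: $c(N_k)$ is bounded by $c(1)$, and $\log N_k \geq c_0 k$ for an absolute $c_0>0$ because $D_{N_k}\geq 2^k D_{N_0}$ and $D_N\leq N^2$ force $N_k \geq 2^{k/2}\sqrt{D_{N_0}}$. First Borel--Cantelli then yields a full-measure set of $\alpha$ on which, for $k$ sufficiently large, every $d\geq 1/2$ in $(A_{N_{k+1}-1}-A_{N_{k+1}-1})^+$ satisfies $\|\alpha d\|\geq \psi(N_k)\geq \psi(N)$ for all $N\in[N_k,N_{k+1}-1]$. For the small differences $d\in[c(N),1/2)$, the identity $\|\alpha d\|=\alpha d$ (valid for $\alpha\in[0,1]$ because $\alpha d<1/2$) gives the deterministic bound $\|\alpha d\|\geq \alpha c(N)$, which exceeds $\psi(N)$ as soon as $D_N\log N (\log_2 N)^{1+\epsilon}\geq 1/\alpha$. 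For any fixed $\alpha>0$, this holds for all $N\geq N_0(\alpha)$.

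Combining the two analyses, for almost every $\alpha\in(0,1]$ and all sufficiently large $N$, every $d\in(A_N-A_N)^+$ satisfies $\|\alpha d\|\geq \psi(N)$, proving (a). The passage from $(N_k)$ to all intermediate $N$ is routine, using that both $\delta_{\min}^{\alpha}(\cdot)$ and $\psi(\cdot)$ are non-increasing. For (b), since $\|\cdot\|\leq 1/2$ and $\psi(N)\to 0$, the bound $\delta_{\min}^{\alpha}(N)\leq 1-\psi(N)$ is automatic whenever $\psi(N)<1/2$. The main obstacle I anticipate is the bookkeeping around the $D$-doubling subsequence --- in particular, carefully verifying that $\log N_k\geq c_0 k$ so that the tail series converges --- together with the separate treatment of the degenerate case in which $D_N$ stabilizes at a finite limit, where the sequence $(N_k)$ terminates and the conclusion instead follows from a classical argument on finitely many Diophantine conditions.
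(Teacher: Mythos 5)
Your proof is correct and reaches the theorem's conclusion, but it follows a genuinely different route from the paper's. The paper proceeds by carefully enumerating the cumulative positive difference set $\bigcup_N (A_N-A_N)^+$ as a sequence $(z_n)$ ordered by first-appearance time, introducing the non-decreasing ``birth'' function $f(n)=\min\{T:z_n\in(A_T-A_T)^+\}$, and then choosing the Borel--Cantelli threshold $\psi(n)=c(f(n))/\bigl(n(\log\sqrt{n})(\log_2\sqrt{n})^{1+\epsilon}\bigr)$. The point of inserting the factor $c(f(n))$ is precisely to cancel the potentially divergent factor $1+2/z_n\leq 1+2/c(f(n))$ coming from small differences, so that $\lambda(S_n)$ is bounded by a constant multiple of the summable $1/\bigl(n(\log\sqrt{n})(\log_2\sqrt{n})^{1+\epsilon}\bigr)$; crucially, the monotonicity of $c$ and of $f$ makes $\psi$ decreasing, which is what lets the argument propagate from the index $n\leq D_N$ back to the desired bound in $N$. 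You sidestep the enumeration and $f$-bookkeeping entirely by splitting at $d=1/2$: the deterministic identity $\|\alpha d\|=\alpha d\geq\alpha c(N)$ handles the small differences, while the large ones admit the uniform measure bound $\ll\psi$, paid for by a $D$-doubling subsequence $N_k$ so that the series over $k$ converges. Both proofs produce the factor $c(N)$ in the numerator, but by very different mechanisms: the paper embeds it directly into $\psi(n)$, whereas in your argument the factor appears in the threshold $\psi(N_k)$ and the convergence of the Borel--Cantelli series relies only on the trivial bound $c(N_k)\leq c(1)$ together with the exponential growth $\log N_k\gg k$ forced by the doubling. A small remark: the degenerate scenario you flag at the end, where $D_N$ stabilizes, cannot actually occur, since $D_N\geq N-1\to\infty$ for any sequence of distinct reals; so no separate treatment is needed there. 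Your observation that part (b) is an immediate consequence of $\delta^{\alpha}_{\min}(N)\leq 1/2$ together with $\psi(N)\to 0$ is also correct and is, in fact, simpler than the paper's derivation of (b) from the lower bound on $\{\alpha z_n\}$.
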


\subsection{Upper bound via the Duffin-Schaeffer conjecture}
For the quantity $\lfloor \delta^{\alpha}_{\min} \rfloor(N)$, we prove three types of upper bounds: first, a bound that holds for infinitely many $N$ and depends on the size of $a_N$; second, a bound that is independent of the size of $a_N$ and instead relies only on $H_N$; third, an upper bound for $\lfloor \delta_{\min}^{\alpha} \rfloor(N)$ in terms of $H_N$ that is valid for all sufficiently large $N$.
\begin{thm}\label{upper bound}
Let $(a_n)_{n \geq 1}$ be a sequence of distinct positive real numbers such that
$\displaystyle\inf_{\substack{1\leq  m \neq n \leq N}} |a_m - a_n| \geq 1$ for all $N \in \mathbb{N}$. Then, for almost all $\alpha \in [0,1]$, we have 
        \begin{align}
    \lfloor \delta^{\alpha}_{\min }\rfloor(N)  \leq  \frac{4\log _{2} B_{N}}{H_{N} \log (\frac{N}{9}) \log _{2} (\frac{N}{9})} \quad \text{for infinitely many }N,
    \end{align}
    where $ \lfloor \delta^{\alpha}_{\min }\rfloor(N) =\displaystyle \min_{1\leq m\neq n\leq  N}\| \alpha\lfloor |a_{m}-a_{n}| \rfloor\| $, $B_N= \displaystyle \max_{1\leq m\neq n\leq  N} \lfloor |a_{m}-a_{n}|\rfloor$ and $H_N =  \#\lfloor(A_N -A_N)^{+}\rfloor$. 
\end{thm}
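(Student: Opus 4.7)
The strategy is to recast the problem as a one-dimensional Diophantine approximation statement for positive integers and invoke the Koukoulopoulos--Maynard theorem (the resolution of the Duffin--Schaeffer conjecture). I would define an approximation function $\psi \colon \mathbb{N} \to [0,1/2]$, supported precisely on those integers $k$ that appear as floored positive differences (i.e.\ $\mathcal{N}(k) < \infty$), by
$$\psi(k) \;=\; \frac{C\, \log_{2} B_{\mathcal{N}(k)}}{H_{\mathcal{N}(k)}\, \log(\mathcal{N}(k)/9)\, \log_{2}(\mathcal{N}(k)/9)},$$
for a constant $C > 0$ calibrated to match the final factor $4$ in the statement. The design of $\psi$ is such that whenever $\|\alpha k\| < \psi(k)$ for some $k$ with $\mathcal{N}(k) = N$, we automatically have $k = \lfloor |a_m - a_n|\rfloor$ for some $1 \leq m \neq n \leq N$, and hence $\lfloor \delta^{\alpha}_{\min}\rfloor(N) \leq \|\alpha k\| < \psi(k)$, which is exactly the bound we aim to prove.

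The analytic core is to verify the Duffin--Schaeffer divergence condition $\sum_{k \geq 1} \varphi(k)\psi(k)/k = \infty$. Partitioning by the level sets of $\mathcal{N}$, each index $N$ contributes $H_N - H_{N-1}$ integers $k$, all satisfying $k \leq B_N$. Using the classical lower bound $\varphi(k)/k \gg 1/\log_{2} k \gg 1/\log_{2} B_N$ on this range, the factor $\log_{2} B_N$ cancels between the numerator of $\psi(k)$ and the reciprocal coming from $\varphi(k)/k$, so divergence reduces to showing
$$\sum_{N} \frac{H_N - H_{N-1}}{H_N\, \log(N/9)\, \log_{2}(N/9)} \;=\; \infty.$$
This I would establish by an Abel-summation / telescoping argument, leveraging the monotonicity of $H_N$ and the fact that the separation hypothesis $\inf_{m \neq n}|a_m - a_n| \geq 1$ forces $H_N \to \infty$. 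I also need to verify that $\psi(k) \leq 1/2$ (possibly after discarding finitely many small $N$), so that the Koukoulopoulos--Maynard theorem applies cleanly. This divergence step, together with the careful calibration of $C$, is where I expect the bulk of the technical difficulty to lie.

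Once divergence is in hand, the Koukoulopoulos--Maynard theorem yields, for almost every $\alpha \in [0,1]$, infinitely many positive integers $k$ with $\|\alpha k\| < \psi(k)$. Since each $N$ admits only finitely many (exactly $H_N - H_{N-1}$) preimages under $\mathcal{N}$, this infinite collection of $k$'s must correspond to infinitely many distinct values of $N$. For each such $N$ the construction of $\psi$ delivers $\lfloor \delta^{\alpha}_{\min}\rfloor(N) \leq \psi(k) = \tfrac{4 \log_{2} B_N}{H_N \log(N/9)\log_{2}(N/9)}$, completing the proof.
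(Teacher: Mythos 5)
Your proposal reaches the theorem by the same engine the paper uses — reduce to a single Diophantine approximation statement for the floored positive differences and invoke the Koukoulopoulos--Maynard theorem — but the two proofs organize the approximation function differently, and that difference is not cosmetic. The paper enumerates $A=\bigcup_N \lfloor (A_N-A_N)^+\rfloor$ as a sequence $(Z_n)_{n\geq 1}$ with $\{Z_n : n\leq H_N\}=\lfloor (A_N-A_N)^+\rfloor$ and sets $\psi(n)=\frac{\log_2 Z_n}{n\log n\log_2 n}$, so that after cancelling $\log_2 Z_n$ against $\varphi(Z_n)/Z_n\gg 1/\log_2 Z_n$, divergence of $\sum_q \psi^*(q)\varphi(q)/q$ reduces at once to the classical fact $\sum_n \frac{1}{n\log n\log_2 n}=\infty$. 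The passage from the enumeration index $n$ (lying in $(H_N,H_{N+1}]$) back to $N$ is then made at the very end using Lemma~\ref{H_N bound for general sequence}: $t>H_N\geq \tfrac14 H_{N+1}\geq N/8$. You instead parametrize $\psi$ directly by the integer $k$ via $\mathcal{N}(k)$, which removes the back-and-forth conversion at the end, but pushes the work into the divergence step: after grouping by the fibres of $\mathcal{N}$, you must show
\begin{align}
\sum_{N}\frac{H_N-H_{N-1}}{H_N\,\log(N/9)\,\log_2(N/9)}=\infty .
\end{align}
This is true, but it does not follow merely from ``monotonicity of $H_N$ and $H_N\to\infty$'' as you suggest. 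You need exactly the two-sided control from Lemma~\ref{H_N bound for general sequence}: the lower bound $H_N\geq \lceil (N-1)/2\rceil$ so that $\log H_N\asymp \log N$, and the ratio bound $H_{N+1}\leq 4H_N$ so that $u_N-u_{N-1}=\log(H_N/H_{N-1})$ stays bounded and $\frac{H_N-H_{N-1}}{H_N}\gg u_N-u_{N-1}$. Only then does the sum compare to $\sum_N \frac{u_N-u_{N-1}}{u_N\log u_N}$, which diverges by an integral comparison against $\int \frac{du}{u\log u}$. Without the bounded-ratio ingredient, the telescoping comparison degrades and the argument does not close. So the route is viable and even slightly tidier in the endgame (you never need the clean-up step converting $\frac{\log_2 B_{N+1}}{H_N\log(N/8)\log_2(N/8)}$ into $\frac{4\log_2 B_N}{H_N\log(N/9)\log_2(N/9)}$), but the divergence lemma you defer is a genuine piece of work that the paper sidesteps by choosing to index $\psi$ by position rather than by value. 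If you fill in that lemma using both inequalities from Lemma~\ref{H_N bound for general sequence}, and handle the finitely many $k$ for which $\psi(k)>1/2$ as you already anticipate, the proof is complete. (One small bonus of your route worth flagging: because the divergence holds for any constant $C>0$, calibration to $C=4$ and the shift $N/9$ are unnecessary artifacts of matching the statement; your argument, pushed through, gives the cleaner bound $\frac{\log_2 B_N}{H_N\log N\log_2 N}$.)
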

\begin{remark}
        Since $B_N \leq \lfloor \max(a_1,\ldots, a_N)\rfloor$, the statement of Theorem~\ref{upper bound} also holds when $B_N$ is replaced by $\lfloor\max(a_1,\ldots, a_N)\rfloor$.
\end{remark}
\begin{remark}
For a sequence of distinct positive integers, we have  $\displaystyle\inf_{1\leq m \neq n\leq N}|a_{m}-a_{n}|\geq 1$, $ \lfloor \delta^{\alpha}_{\min }\rfloor(N)=\delta_{\min }^{\alpha}(N)$, and $D_N = H_N$. As a consequence, Theorem~\ref{upper bound} yields a result of Aistleitner, El-Baz, and Munsch \cite[Theorem 1 (Equation 5)]{ABM}, which corresponds to the integer case.
\end{remark}

\begin{thm}\label{upper bound independent of a_N}
    Let $(a_n)_{n \geq 1}$ be a sequence of distinct positive real numbers such that
$\displaystyle\inf_{\substack{1\leq  m \neq n \leq N}} |a_m - a_n| \geq 1$ for all $N \in \mathbb{N}$. Then, for almost all $\alpha \in [0,1]$, we have 
    \begin{align}
            \lfloor \delta^{\alpha}_{\min }\rfloor(N) \leq \frac{1}{H_{N}} \quad   \text{for infinitely many }N.
    \end{align}
\end{thm}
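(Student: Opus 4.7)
The plan is to apply the Koukoulopoulos--Maynard theorem (Theorem~\ref{Duffin-Schaeffer Conjecture}) to an approximation function $\psi$ built from the floored difference set, following the blueprint of \cite{ABM} for integer sequences. A natural candidate is
$$\psi(k) = \min\!\left(\tfrac{1}{2},\, \tfrac{1}{H_{\mathcal{N}(k)}}\right) \mathds{1}_{S}(k), \qquad S := \bigcup_{N \geq 1} \lfloor(A_N - A_N)^+\rfloor,$$
with $\mathcal{N}(k)$ as in Section~\ref{Notation}. Once the divergence $\sum_{k} \varphi(k)\psi(k)/k = \infty$ is verified, Koukoulopoulos--Maynard supplies, for almost every $\alpha$, infinitely many $k \in S$ with $\|\alpha k\| < \psi(k) \leq 1/H_{\mathcal{N}(k)}$. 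Setting $N = \mathcal{N}(k)$ then gives $k \in \lfloor(A_N - A_N)^+\rfloor$ and $\lfloor \delta_{\min}^{\alpha}\rfloor(N) \leq \|\alpha k\| < 1/H_N$, as required. Since the preimages $\mathcal{N}^{-1}(N)$ are finite (of size $H_N - H_{N-1}$), infinitely many such $k$ translate into infinitely many distinct $N$.

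The substantive content is the divergence step. I would group the sum by the value of $\mathcal{N}$, writing
$$\sum_{k \in S} \frac{\varphi(k)}{k\,H_{\mathcal{N}(k)}} = \sum_{N \geq 1} \frac{T_N - T_{N-1}}{H_N}, \qquad T_N := \sum_{k \in \lfloor(A_N - A_N)^+\rfloor} \frac{\varphi(k)}{k},$$
and, after Abel summation, reduce the question to a suitable lower bound on $T_N$. The key input is Lemma~\ref{lemma 3}---the counting lemma developed in the preceding section for real sequences---which controls partial sums of $\varphi(k)/k$ over the floored difference set. The hypothesis $\inf_{m \neq n}|a_m - a_n| \geq 1$ ensures that new floored positive differences keep appearing (in fact $H_N \geq N-1$), so $H_N \to \infty$ and the Abel-summed series does not terminate trivially.

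The main obstacle is precisely this divergence estimate. A crude pointwise bound using only $\varphi(k)/k \gg 1/\log\log(3k)$ would reintroduce a dependence on $B_N = \max \lfloor(A_N - A_N)^+\rfloor$, recovering at best the weaker conclusion of Theorem~\ref{upper bound}; removing the $\log_2 B_N$ factor, which is the whole point of Theorem~\ref{upper bound independent of a_N}, requires a structural lower bound on $T_N$ that does not see $B_N$. This is exactly the role played by Lemma~\ref{lemma 3} in the real-sequence setting, taking the place of the corresponding combinatorial input in \cite{ABM}. Once that estimate is installed, the remaining steps---the Koukoulopoulos--Maynard application and the passage from infinitely many $k$ to infinitely many $N$---are close adaptations of the integer-case argument in \cite{ABM}.
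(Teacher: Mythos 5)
Your proposal rests on a direct application of the Koukoulopoulos--Maynard theorem (Theorem~\ref{Duffin-Schaeffer Conjecture}) with $\psi(k)\asymp 1/H_{\mathcal{N}(k)}$ supported on $S = \bigcup_N \lfloor(A_N-A_N)^+\rfloor$, and you correctly identify the divergence $\sum_k \varphi(k)\psi(k)/k = \infty$ as the crux. But this divergence is \emph{not} provable in this form, and the reason is structural, not a matter of finding the right counting lemma. Writing $S = \{Z_1, Z_2, \ldots\}$ as in the paper and grouping by $N = \mathcal{N}(Z_n)$, the series becomes (up to bounded factors, using $H_N \leq 4H_{N-1}$) $\sum_n \varphi(Z_n)/(n Z_n)$. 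There is no pointwise lower bound on $\varphi(Z_n)/Z_n$ that is uniform in $Z_n$: it can be as small as $\asymp 1/\log_2 Z_n$, and nothing in the hypothesis $\inf_{m\neq n}|a_m-a_n|\geq 1$ prevents the floored differences from being enormous or from sharing many small prime factors. Thus $\sum_n \varphi(Z_n)/(n Z_n)$ can converge, and the direct Duffin--Schaeffer route then yields nothing. This is precisely the obstruction that causes the $\log_2 B_N$ factor in Theorem~\ref{upper bound}, and why removing it requires more than a better estimate for $T_N$.

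Your identification of Lemma~\ref{lemma 3} as the missing ``counting lemma controlling partial sums of $\varphi(k)/k$'' is also a misreading: Lemma~\ref{lemma 3} is the Pollington--Vaughan \emph{overlap} estimate bounding $\lambda(S_m^*\cap S_n^*)$; it says nothing about $T_N$. The paper's actual proof does not invoke Theorem~\ref{Duffin-Schaeffer Conjecture} at all for this result. Instead it restricts to a dyadic window $n\in(2^{k/2},2^k]$, takes $\psi(n)=1/(4n)$, and sifts only the small primes $p\leq 4^k$ in the definition of $S_n^*$. This \emph{partial} coprimality is what guarantees a size-independent lower bound $\lambda(S_n^*)\gg 1/(nk)$ via Mertens (Lemma~\ref{lemma 2}), so the sum of measures over the window is $\gg 1$. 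Positivity of the measure of $\bigcup S_n^*$ is then obtained via the Chung--Erd\H{o}s inequality (Lemma~\ref{lemma 4}) after bounding overlaps with Lemma~\ref{lemma 3} and controlling the resulting GCD-sum using Koukoulopoulos--Maynard's Proposition 5.4 (quoted as Lemma~\ref{lemma 6}) applied to the auxiliary integers $Y_n$. Finally, Cassels' zero-one law (Lemma~\ref{lemma 5}) upgrades positive measure to full measure. None of these ingredients appears in your outline, and the step you flag as the ``main obstacle'' is where the argument genuinely breaks rather than merely requires a lemma to be installed.
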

\begin{remark}
    Theorem~\ref{upper bound independent of a_N} finds a natural counterpart in the following result, which establishes a similar upper bound for $\lfloor \delta^{\alpha}_{\min }\rfloor(N)$, valid for all sufficiently large $N$.
\end{remark}
 \begin{thm}\label{upper bound in terms of H_N for sufficiently large N}
            Let $(a_n)_{n \geq 1}$ be a sequence of distinct positive real numbers such that
$\displaystyle\inf_{\substack{1\leq  m \neq n \leq N}} |a_m - a_n| \geq 1$ for all $N \in \mathbb{N}$. Then, for every $\epsilon >0$ and almost all $\alpha $, 
        $$ \lfloor \delta^{\alpha}_{\min }\rfloor(N) \leq \frac{N^{\epsilon}}{H_N} \quad \text{for all except finitely many }N. $$
        \end{thm}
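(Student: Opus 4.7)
The plan is to invoke the convergent half of the Borel--Cantelli lemma. Fix $\epsilon > 0$, set $K_N := \lfloor (A_N - A_N)^+ \rfloor$ (so $|K_N| = H_N$), put $\psi_N := N^{\epsilon}/H_N$, and define the ``bad'' event
$$
F_N := \left\{ \alpha \in [0,1] : \|\alpha k\| > \psi_N \text{ for every } k \in K_N \right\}.
$$
The conclusion of the theorem is equivalent to $\alpha$ lying in only finitely many $F_N$ almost surely, so by Borel--Cantelli it suffices to prove $\sum_N |F_N| < \infty$. A trivial case is $\psi_N \geq 1/2$, i.e.\ $H_N \leq 2 N^{\epsilon}$: since $\|x\| \leq 1/2$ for all $x$, this forces $F_N = \emptyset$, and the problem reduces to the regime $\psi_N < 1/2$.

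In the nontrivial regime, write $F_N^{c} = \bigcup_{k \in K_N} I_k$ with $I_k := \{ \alpha : \|\alpha k\| \leq \psi_N\}$ of Lebesgue measure $2 \psi_N$, and lower bound the union via the Chung--Erd\H{o}s (second-moment) inequality
$$
|F_N^{c}| \;\geq\; \frac{\bigl(\sum_{k \in K_N} |I_k|\bigr)^{2}}{\sum_{k,k' \in K_N} |I_k \cap I_{k'}|} \;=\; \frac{4 N^{2\epsilon}}{\sum_{k,k' \in K_N} |I_k \cap I_{k'}|}.
$$
Splitting the pairwise sum into the diagonal ($k = k'$, contributing $2 N^{\epsilon}$) and the off-diagonal part, and using the standard estimate $|I_k \cap I_{k'}| \leq 4\psi_N^{2} + 2\psi_N \, \gcd(k,k')/\max(k,k')$ for $k \neq k'$, reduces the denominator to $4 N^{2\epsilon} + 2 \psi_N S_N + O(N^{\epsilon})$, where $S_N := \sum_{k \neq k' \in K_N} \gcd(k,k')/\max(k,k')$.

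The main obstacle, and the heart of the proof, is to uniformly bound $S_N$ over an arbitrary subset $K_N \subset \mathbb{N}$. This is precisely the type of GCD-correlation bound that underlies the Koukoulopoulos--Maynard resolution of the Duffin--Schaeffer conjecture (Theorem~\ref{Duffin-Schaeffer Conjecture}) and was already deployed in proving Theorem~\ref{upper bound independent of a_N}. Adapting that GCD-graph compression machinery to the present setting should yield $\psi_N S_N = o(N^{2\epsilon})$ and consequently $|F_N| \ll N^{-c\epsilon}$ for some constant $c > 0$ in the nontrivial regime. Summability is then obtained first along a sub-sequence $(N_j)$ chosen sparse enough so that $\sum_j |F_{N_j}| < \infty$; the conclusion is propagated to all sufficiently large $N$ by combining the monotonicity $\lfloor \delta^{\alpha}_{\min}\rfloor(N) \leq \lfloor \delta^{\alpha}_{\min}\rfloor(N_j)$ for $N \in [N_j, N_{j+1})$ with the $N^{\epsilon}$ slack in the target bound, which absorbs the factor $(N/N_j)^{\epsilon}$ inherited from the change of scale. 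Once the GCD-correlation estimate is in place, the remainder is a routine second-moment Borel--Cantelli exercise.
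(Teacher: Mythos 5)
Your overall strategy (second-moment method, convergent Borel--Cantelli along a sparse subsequence, then monotonicity to propagate to all $N$) matches the structure of the paper's proof in spirit. The paper packages the second-moment step differently: it introduces a smooth periodization $F_M$ of a bump function, studies the counting statistic $D(N,M)(\alpha)=\sum_{z\in\mathcal Z_N}F_M(\alpha z)$, computes $\mathbb E[D(N,M)]=H_N/M$, and bounds the variance via the Fourier expansion of $F_M$, rather than taking sharp indicator intervals $I_k$ and applying Chung--Erd\H{o}s. These are equivalent in essence — Chung--Erd\H{o}s is Chebyshev/second moment in disguise — so the choice is cosmetic. There are, however, two substantive gaps in the proposal.

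First, the GCD-correlation tool is misidentified. The estimate that closes the variance argument here is \emph{not} the Koukoulopoulos--Maynard GCD-graph machinery (that machinery is what drives the proof of Theorem~\ref{upper bound independent of a_N}, which is the \emph{divergence} Borel--Cantelli direction; it controls overlaps well enough to prove the union has \emph{positive} measure, but it is not set up to give a quantitative decay rate for the measure of the complement). What the present theorem needs is the Bondarenko--Seip bound for GCD sums with exponent $1/2$,
\begin{align}
\sum_{m,n=1}^{H_N}\frac{\gcd(Z_m,Z_n)}{\sqrt{Z_m Z_n}}\ll H_N\exp\left(A\sqrt{\frac{\log H_N\,\log_3 H_N}{\log_2 H_N}}\right)=H_N\,N^{o(1)},
\end{align}
exactly as used in the paper (and as in Rudnick's original $L^2$ method). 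Note also that $\gcd(k,k')/\max(k,k')\le\gcd(k,k')/\sqrt{kk'}$, so your $S_N$ is dominated by this GCD sum. With that bound in place your chain of inequalities gives $\psi_N S_N\ll N^{\epsilon+o(1)}$, hence $|F_N|\ll N^{-\epsilon+o(1)}$, which is indeed \emph{not} summable for small $\epsilon$ — so the subsequence is genuinely needed, as you correctly anticipated.

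Second, the propagation step is missing its key ingredient. For $N\in[N_j,N_{j+1})$ you correctly use the monotonicity $\lfloor\delta^\alpha_{\min}\rfloor(N)\le\lfloor\delta^\alpha_{\min}\rfloor(N_j)$, and you know (along the subsequence) $\lfloor\delta^\alpha_{\min}\rfloor(N_j)\le N_j^{\epsilon'}/H_{N_j}$. To conclude $\lfloor\delta^\alpha_{\min}\rfloor(N)\le N^\epsilon/H_N$, the obstruction is not the harmless ratio $(N/N_j)^\epsilon\ge1$ but the \emph{denominator}: you must bound $H_N/H_{N_j}$ — and since $H_N$ is increasing, effectively bound $H_{N_{j+1}}/H_{N_j}$ — by a constant (or by something small compared to $N_j^{\epsilon-\epsilon'}$). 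This is not automatic: it uses the spacing hypothesis $\inf_{m\neq n}|a_m-a_n|\ge1$ through the lower bound $H_N\ge N/2$ (Lemma~\ref{H_N bound for general sequence}) and the recurrence $H_{N+1}\le H_N+N$, combined with the specific subsequence $N_j=\lfloor j^{2/\eta}\rfloor$. The paper isolates this in Lemma~\ref{relation between H_N_k}, which gives $H_{N_{j+1}}\le 9\,H_{N_j}$. Without this lemma, your ``$N^\epsilon$ slack absorbs the change of scale'' step has no content: the $N$-factors are never the issue, the $H_N$-factors are. Once you supply the Bondarenko--Seip input and this $H$-ratio lemma, your argument goes through and is essentially the paper's argument in slightly different clothing.
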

\begin{remark}
    When $(a_n)_{n \geq 1}$ is an integer sequence, we have $ \lfloor \delta^{\alpha}_{\min }\rfloor(N)=\delta_{\min }^{\alpha}(N)$, and $D_N = H_N$. Thus, Theorems~\ref{upper bound independent of a_N} and~\ref{upper bound in terms of H_N for sufficiently large N} recover the results in \cite[Section 4.3]{ABM} and \cite[Theorem 2]{ABM}, respectively.
\end{remark}
\subsection{A zero-one law for the minimal gap}
Our final main result is a comprehensive zero-one law that completely characterises the set of $\alpha$ for which $\lfloor \delta^{\alpha}_{\min }\rfloor(N)$ is infinitely often smaller than a given threshold function $\eta(N)$.
\begin{thm}\label{THM 3}
Let $\left(a_{n}\right)_{n \geq 1}$ be a sequence of positive real numbers such that $\displaystyle\inf_{1\leq m \neq n\leq N}|a_{m}-a_{n}|\geq 1$. Let $(\eta(N))_{N \geq 1}$ be a sequence of non-negative reals. Let $\mathcal{A}$ denote the set of those $\alpha \in[0,1]$ for which $\lfloor \delta_{\min }^{\alpha} \rfloor (N) \leq \eta(N)$ holds for infinitely many $N$. Then, we have $\lambda(\mathcal{A})=0$ or $\lambda(\mathcal{A})=1$, depending on whether the series
\begin{align}\label{series conv div} 
\sum_{k=1}^{\infty} \varphi(k) \sup _{b \geq 1}\left\{\frac{\sup _{\ell \geq \mathcal{N}(b k)} \eta(\ell)}{b k}\right\} 
\end{align}
is convergent or divergent, respectively, where $\varphi$ is Euler's totient function and $\lambda$ denotes the Lebesgue measure on $\mathbb{R}$.
\end{thm}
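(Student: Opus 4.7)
The plan is to reduce the zero-one assertion to the Koukoulopoulos--Maynard theorem on the Duffin--Schaeffer conjecture by repackaging the event ``$\lfloor \delta^\alpha_{\min}\rfloor(N) \leq \eta(N)$ for infinitely many $N$'' as a Diophantine approximation statement about reduced fractions. Define
$$\tilde{\eta}(h) := \sup_{\ell \geq \mathcal{N}(h)} \eta(\ell), \qquad \rho(k) := \sup_{b \geq 1} \frac{\tilde{\eta}(bk)}{bk},$$
with the convention $\tilde{\eta}(h) = 0$ when $\mathcal{N}(h) = \infty$, so that the series \eqref{series conv div} is exactly $\sum_{k \geq 1} \varphi(k)\rho(k)$. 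Introduce the Duffin--Schaeffer limsup set
$$\mathcal{A}^{DS} := \limsup_{k \to \infty} \bigcup_{\substack{0 \leq p \leq k \\ \gcd(p,k) = 1}} \bigl\{\alpha \in [0,1] : \|\alpha - p/k\| \leq \rho(k)\bigr\},$$
for which the Koukoulopoulos--Maynard theorem gives $\lambda(\mathcal{A}^{DS}) = 0$ or $1$ according to convergence or divergence of $\sum_k \varphi(k)\rho(k)$. The proof reduces to showing $\lambda(\mathcal{A} \triangle \mathcal{A}^{DS}) = 0$.

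For $\mathcal{A} \subseteq \mathcal{A}^{DS}$ modulo null, take $\alpha \in \mathcal{A}$. For each witnessing $N$, select $h_N \in \lfloor(A_N - A_N)^+\rfloor$ with $\|\alpha h_N\| \leq \eta(N)$, let $p_N$ be the nearest integer to $\alpha h_N$, and write $p_N/h_N = q_N/k_N$ in lowest terms, with $h_N = b_N k_N$. The estimate $|\alpha - q_N/k_N| \leq \eta(N)/(b_N k_N)$, combined with $\eta(N) \leq \tilde{\eta}(h_N)$ (valid since $\mathcal{N}(h_N) \leq N$), yields $|\alpha - q_N/k_N| \leq \rho(k_N)$. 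If the $k_N$ are unbounded, then $\alpha \in \mathcal{A}^{DS}$. The remaining case, in which only finitely many distinct reduced fractions $q_N/k_N$ appear, is handled by a separate book-keeping argument showing that (outside the countable null set of rationals $\{q/k\}$) such $\alpha$ must already force the series \eqref{series conv div} to diverge, putting us in the regime where the conclusion is established via the reverse inclusion.

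For the reverse inclusion $\mathcal{A}^{DS} \subseteq \mathcal{A}$ modulo null, fix $\alpha \in \mathcal{A}^{DS}$ realising infinitely many strict inequalities $\|\alpha - p/k\| < \rho(k)$ (the equality cases form a countable union of two-point sets, hence are null). For each such coprime pair $(p, k)$, by definition of $\rho$ there exists $b = b(p,k)$ with $\tilde{\eta}(bk)/(bk) > \|\alpha - p/k\|$, hence $\|\alpha(bk)\| < \tilde{\eta}(bk)$. By the definition of $\tilde{\eta}$, some $N \geq \mathcal{N}(bk)$ satisfies $\eta(N) \geq \|\alpha(bk)\|$; since $bk \in \lfloor(A_N - A_N)^+\rfloor$, this certifies $\lfloor\delta^\alpha_{\min}\rfloor(N) \leq \eta(N)$. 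As $k \to \infty$ along the subsequence of witnessing pairs, the corresponding $N$'s tend to infinity as well, placing $\alpha$ in $\mathcal{A}$.

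The main obstacle I anticipate is the case analysis in the forward inclusion when the reduced denominators $k_N$ remain bounded: in the integer setting of \cite{ABM} the gcd-structure of $h_N$ and $p_N$ admits straightforward control, but here the real-valued floor decouples the size of $h_N$ from the size of $k_N$, demanding a careful extraction to show that the bounded-$k_N$ contribution either sits in a null set of rationals or already forces divergence of \eqref{series conv div}, in which case the reverse inclusion handles it. Once this case analysis is settled, the zero-one law is immediate from the Koukoulopoulos--Maynard theorem applied to $\mathcal{A}^{DS}$.
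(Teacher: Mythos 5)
Your approach takes a genuinely different route from the paper. The paper invokes Catlin's theorem (Lemma~\ref{Catlin Conjecture}) applied to the \emph{non-coprime} approximation sets $S_k$ built from the radius $\psi(k)/k$ with $\psi(k)=\sup_{\ell\geq\mathcal{N}(k)}\eta(\ell)$; Catlin's criterion already contains the $\sup_{b\geq 1}$ rescaling, so no coprime reduction is needed and the equality $\mathcal{A}=\mathcal{B}$ is verified exactly (and the paper isolates $\limsup\eta>0$ as a separate Case~1, treated by a Weyl-equidistribution argument via Lemma~\ref{lim sup positive}). You instead apply the original coprime Duffin--Schaeffer formulation directly, with the radius $\rho(k)$ absorbing the $\sup_b$, and perform the reduction $p_N/h_N = q_N/k_N$ by hand. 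Your reverse inclusion $\mathcal{A}^{DS}\subseteq \mathcal{A}$ modulo null is correct, and it is in fact a mild streamlining: in the divergent case it yields $\lambda(\mathcal{A})=1$ directly from the Koukoulopoulos--Maynard theorem, avoiding the paper's separate Weyl argument.

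The genuine gap is in the bounded-$k_N$ branch of the forward inclusion. For irrational $\alpha$, bounding $k_N$ forces a single reduced fraction $q^*/k^*$ to recur, and since $|\alpha - q^*/k^*| \leq \eta(N)/(b_N k^*)$ with $b_N k^* \geq 1$, one gets $\eta(N) \geq |\alpha - q^*/k^*| > 0$ for infinitely many $N$, so $\limsup_N \eta(N) > 0$. You then assert this ``must already force the series \eqref{series conv div} to diverge,'' but that implication is nontrivial and you do not prove it. Setting $u^*:=\limsup\eta$, each $m\in A=\bigcup_N\lfloor(A_N-A_N)^+\rfloor$ gives $\sum_{k\mid m}\varphi(k)\rho(k)\geq (u^*/m)\sum_{k\mid m}\varphi(k)=u^*$, but because distinct $m$'s share divisors, these lower bounds cannot simply be added to conclude divergence. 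The paper handles this by discarding small divisors ($\sum_{d\mid k,\,d<\log k}\varphi(d)<k/2$ for large $k$, Lemma~\ref{Euler phi bound}) and invoking a dedicated divergence criterion (Lemma~\ref{Divergence lemma} and Corollary~\ref{Divergence lemma corollary}) that extracts a subsequence $k_1<k_2<\cdots$ in $A$ with $k_{j+1}\geq e^{k_j}$, guaranteeing the large divisors counted for distinct $k_j$'s are disjoint. Without supplying an argument of this kind, the proof has a hole precisely at the point you flagged as the ``main obstacle,'' and as written it does not close.
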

\begin{remark}
Under the assumption that $\eta$ is decreasing, the series for which convergence or divergence is to be checked in Theorem~\ref{THM 3} takes the following simpler form
$$
\sum_{k=1}^{\infty} \varphi(k) \sup _{b \geq 1}\left\{\frac{\eta(\mathcal{N}(b k))}{b k}\right\}.$$ 
\end{remark}
\begin{remark}
    Theorem~\ref{THM 3} provides a sharp criterion reducing the problem of determining the almost sure behaviour of the minimal gap of $(\alpha a_n)_{n \geq 1}$ to the arithmetic structure of $\lfloor(A_N - A_N)^+ \rfloor$, showing that this floored difference set alone determines the typical asymptotic order, at least in principle. 
\end{remark}
\begin{remark}
    When $(a_n)_{n \geq 1}$ is a sequence of distinct positive integers, we have $\displaystyle\inf_{1\leq m \neq n\leq N}|a_{m}-a_{n}|\geq 1$ and $ \lfloor \delta^{\alpha}_{\min }\rfloor(N)=\delta_{\min }^{\alpha}(N)$. Thus, we recover \cite[Theorem $3$]{ABM} from Theorem~\ref{THM 3}.
\end{remark}

\subsection{Applications and examples}
To illustrate the applicability of our theorems, we present several classes of sequences for which our methods yield new estimates for $\delta_{\mathrm{min}}^{\alpha}(N)$.
\begin{itemize}
    \item \textbf{Polynomial sequences:} For $a_n = n^\theta$ with $\theta \geq 1$, we get $\delta_{\mathrm{min}}^{\alpha}(N) \gg (N^2 \log N (\log_2 N)^{1+\epsilon})^{-1}$ for any $ \epsilon >0$.
    \item \textbf{Polynomial sequences with exponent less than one:} For $a_n = n^\theta$ with $\theta < 1$, we obtain $\delta_{\mathrm{min}}^{\alpha}(N) \gg (N^{3-\theta} \log N (\log_2 N)^{1+\epsilon})^{-1}$ for any $ \epsilon >0$.
    \item \textbf{Lacunary sequences:} Sequences that satisfy $\frac{a_{n+1}}{a_n} \geq D > 1$, such as $a_n = e^n$ or $  \pi^n $.
    \item \textbf{Beatty sequences:} Sequences of the form $a_n = \lfloor n \theta \rfloor$ for irrational $\theta > 1$.
    \item \textbf{Sequences in arithmetic progressions:} Sequences of the form $a_n = a+(n-1)d$ where $a$ and $d$ are positive constants.
    \item \textbf{Sequences under smooth maps:} If $f$ is differentiable with $|f'|$ bounded away from zero, then $(f(a_n))_{n \geq 1}$ inherits the minimal gap properties of $(a_n)_{n \geq 1}$.
\end{itemize}
The subsequent sections of this article are devoted to the detailed proofs of the results mentioned in this section, followed by a discussion of their implications and connections to the existing literature.

\section{minimal gap statistics}\label{minimal gap statistics}
For a given $\alpha \in [0,1]$ and a sequence $(a_n)_{n \geq 1}$ of distinct real numbers, 
let $\delta_{\min }^{\alpha}(N)$ and $ \lfloor \delta^{\alpha}_{\min }\rfloor(N)$ be defined as in Section~\ref{Introduction}. We now introduce the following related variants of the minimal gap:

\begin{itemize}
     \item $\widetilde{\delta}_{\min}^{\alpha}(N)$: The minimal fractional part of scaled differences,
    \begin{align}
    \widetilde{\delta}_{\min}^{\alpha}(N) = \min \left\{ \{\alpha|a_m - a_n|\} : 1 \leq m \neq n \leq N \right\}.
    \end{align}
     \item $\widehat{\delta}_{\min}^{\alpha}(N)$: The minimal scaled absolute difference,
       \begin{align}
    \widehat{\delta}_{\min}^{\alpha}(N) = \min \left\{ \alpha |a_m - a_n| : 1 \leq m \neq n \leq N \right\}.
   \end{align}
    \end{itemize}
\begin{remark}
    The expression $\delta_{\min }^{\alpha}(N)$ depends on the sequence $(a_n)_{n \geq 1}$, as do the quantities $\widetilde{\delta}_{\min }^{\alpha}(N)$, $\widehat{\delta}_{\min }^{\alpha}(N)$, and $\lfloor \delta^{\alpha}_{\min }\rfloor(N)$, but we suppress this dependence in the notation for simplicity.
\end{remark}
Note that
\begin{align}
    \widetilde{\delta}_{\min }^{\alpha}(N)=&\min \left\{\{\alpha|a_{m}-a_{n}|\}: 1 \leq m \neq  n \leq N\right\}\\
    \leq &\{\alpha|a_{m'}-a_{n'}|\}\\
    \leq & \alpha|a_{m'}-a_{n'}| \quad \text{for all }m'\neq n'\leq N.
\end{align}
Now, taking the minimum over all $m' \neq n' \leq N$, we obtain $\widetilde{\delta}_{\min }^{\alpha}(N)\leq \widehat{\delta}_{\min }^{\alpha}(N)$.

Similarly,
\begin{align}
    \delta_{\min }^{\alpha}(N)=&\min \left\{  \|\alpha (a_{m}-a_{n})\|: 1 \leq m \neq  n \leq N\right\}\\
    \leq &\|\alpha (a_{m'}-a_{n'})\|\\
    = &\|\alpha |a_{m'}-a_{n'}|\|\\
    \leq &\{\alpha|a_{m'}-a_{n'}|\} \quad \text{for all }m'\neq n'\leq N.
\end{align}
Now, taking the minimum over all $m' \neq n' \leq N$, we obtain $\delta_{\min }^{\alpha}(N)\leq \widetilde{\delta}_{\min }^{\alpha}(N)$.
Therefore, for all $N \in \mathbb{N}$, we obtain
\begin{align}\label{metric comparison}
    \delta_{\min }^{\alpha}(N)\leq \widetilde{\delta}_{\min }^{\alpha}(N)\leq \widehat{\delta}_{\min }^{\alpha}(N).
\end{align}
\begin{remark}
It is evident from \eqref{metric comparison} that when studying the lower bounds for such quantities, it suffices to find a lower bound for $\delta_{\min }^{\alpha}(N)$ (which will provide a lower bound for the other two, possibly a weaker one), and to find an upper bound it suffices to find an upper bound for the quantity $\widehat{\delta}_{\min }^{\alpha}(N)$. Determining the precise growth rates of these three quantities poses interesting questions for further study.
\end{remark}
\begin{remark}
    If $(a_n)_{n \geq 1}$ is a sequence of positive integers, then we have $ \lfloor \delta^{\alpha}_{\min }\rfloor(N)=\delta_{\min }^{\alpha}(N)$.
\end{remark}

\subsection{\textbf{Proof of Theorem~\ref{thm:min_spacing}}}
We first arrange the elements of $A=\displaystyle\bigcup_{N \geq 1}\left(A_{N}-A_{N}\right)^{+}$ in a sequence. Let $\left(z_{n}\right)_{n \geq 1}$ be a sequence of distinct positive real numbers such that for all $N$, 
\begin{equation}\label{cardinality01}
\left\{z_{n}: 1 \leq n \leq D_{N}\right\}=\left(A_{N}-A_{N}\right)^{+} , 
\end{equation}
with \((0 =) D_1 < D_{2} <\ldots< D_N < D_{N+1} < \ldots\), and $ D_{N} \geq (N-1) $.

The sequence $\left(z_{n}\right)_{n \geq 1}$ need not be unique, as $D_{N+1}-D_{N}$ can be as large as $N$. However, the same proof works for any choice of $\left(z_{n}\right)_{n \geq 1}$ that satisfies \eqref{cardinality01}. Since $(a_n)_{n\geq 1}$ is a sequence of distinct positive reals satisfying $\displaystyle\inf_{m\neq n} | a_{m} -a_{n}| \geq c$, we have $z_n \geq c$ for all $n$,  and hence $\lfloor z_n\rfloor \geq 0$ for all $n$. 

For $n \geq 1$, we set
\begin{align}\label{psi definition}
    \psi(n)=\frac{1}{n(\log \sqrt{n})\left(\log _{2} \sqrt{n}\right)^{1+\varepsilon}},
\end{align}
 $$J_{n}(a)= \left(\frac{a}{z_{n}}-\frac{\psi(n)}{z_{n}}, \frac{a}{z_{n}}+\frac{\psi(n)}{z_{n}}\right),$$ 
and
\begin{align}\label{Sn definition}
S_{n}= [0,1] \cap \Big(\bigcup_{0 \leq a \leq \lfloor z_{n}\rfloor+1}J_{n}(a) \Big).
\end{align}

We have
\begin{align}
    \sum_{n=1}^{\infty} \lambda\left(S_{n}\right) 
    \leq  &\sum_{n=1}^{\infty} \min \left\{ \frac{2\psi(n)}{z_{n}} (\lfloor z_{n}\rfloor +2), 1\right\} \\
    \leq & \sum_{n=1}^{\infty} 2\psi(n) \left(1+\frac{2}{z_n} \right)\\
\leq & \sum_{n=1}^{\infty} 2\left(1+\frac{2}{c}\right)\psi(n) \leq  \sum_{n=1}^{\infty} \frac{2\left(1+\frac{2}{c}\right)}{n(\log \sqrt{n})\left(\log _{2} \sqrt{n}\right)^{1+\varepsilon}}<\infty.
\end{align}
Note that since $S_n\subseteq [0,1]$ we have $\lambda (S_n) = \text{Prob}(\{\alpha\in [0,1]:\alpha \in S_n\})$, and hence $$\displaystyle\sum_{n\geq 1} \text{Prob}(\{\alpha\in [0,1]:\alpha \in S_n\})< \infty.$$
By the first Borel-Cantelli lemma, since $\displaystyle\sum_{n=1}^\infty \lambda(S_n) < \infty$, we have
\begin{align}
0=&\text{Prob}(\{S_n \text{ infinitely often}\})=\text{Prob}\Big(\{\alpha\in [0,1]:\alpha \in \bigcap_{n=1}^{\infty} \bigcup_{i=n}^{\infty}S_i\} \Big).    
\end{align}
In other words, $ \text{Prob}\Big(\{\alpha\in [0,1]:\alpha \notin \displaystyle\bigcap_{n=1}^{\infty} \displaystyle\bigcup_{i=n}^{\infty}S_i\} \Big)=1$, that is, there exists a set $\tilde{Q} \subseteq [0,1]$ with $\lambda(\tilde{Q})=1$ such that for all $\alpha \in \tilde{Q}$, we have $\alpha \notin \displaystyle\bigcap_{n=1}^{\infty} \bigcup_{i=n}^{\infty}S_i$. This means that there exists $N_1(\alpha) \in \mathbb{N}$ such that $\alpha \notin \displaystyle\bigcup_{i=N_1(\alpha)}^{\infty}S_i$, that is, $\alpha \notin S_n$ for all $n \geq N_1(\alpha)$. 

Therefore, for all $n \geq N_1(\alpha)$, we have
\begin{align}
     & \alpha \notin [0,1] \cap \Big(\bigcup_{0 \leq a \leq \lfloor z_{n}\rfloor+1}J_{n}(a) \Big)\\
   \overset{\alpha \in [0,1]}{\implies}& \alpha \notin \bigcup_{0 \leq a \leq \lfloor z_{n}\rfloor+1}J_{n}(a) \\
    \implies & |\alpha -\frac{a}{z_{n}}| \geq \frac{\psi(n)}{z_{n}} \text{ for all } a \in \{0,1,\ldots, \lfloor z_{n}\rfloor+1\}\\
    \implies & |\alpha z_{n}-a| \geq \psi(n) \text{ for all } a \in \{0,1,\ldots, \lfloor z_{n}\rfloor+1\}.
\end{align}
Therefore, for all $\alpha \in \tilde{Q}$ and all $n \geq N_1(\alpha)$, we have
\begin{align}\label{minimum bound}
 & |\alpha z_{n}-a| \geq \psi(n) \text{ for all } a \in \{0,1,\ldots, \lfloor z_{n}\rfloor+1\}.
\end{align}

\textbf{Case 1: } Let $z_n \in [c,1)$. Then $\lfloor z_{n}\rfloor=0$. We have $0\leq \alpha z_n \leq z_n =\lfloor z_n\rfloor +\{z_n\} =\{z_n\}< 1$, so $|\alpha z_n -1|=1-\alpha z_n=1-\{\alpha z_n\}$. Therefore, using \eqref{minimum bound}, we obtain
\begin{align}\label{term 1}
 \| \alpha z_n\|=\min( \{\alpha z_{n}\},1-\{\alpha z_{n}\})
 =&\displaystyle\min_{a\in \{0,1\} }|\alpha z_{n} -a|
 = \displaystyle\min_{a\in \{0,\ldots,\lfloor z_{n}\rfloor+1\} }|\alpha z_{n} -a|
 \geq \psi(n).
\end{align}

\textbf{Case 2: } Let $z_n \in [1,\infty)$. Then, for all $\alpha \in [0,1]$, we have $0\leq \alpha z_n \leq z_n< \lfloor z_n\rfloor +1$.
Therefore, using \eqref{minimum bound}, we obtain
\begin{align}\label{term 2}
||\alpha z_{n}||=\displaystyle\min_{a\in \{0,1,\ldots,\lfloor z_{n}\rfloor+1\} }|\alpha z_{n} -a|\geq \psi(n).    
\end{align}

Therefore, combining \eqref{term 1} and \eqref{term 2}, we find that for all $\alpha \in \tilde{Q}$ and all $n \geq N_1(\alpha)$, we have $||\alpha z_{n}||\geq \psi(n)$. 

$\textbf{(a)}$ Let $a_{m}, a_{n} \in A_{N}$ with $m\neq n$. Then, $|a_{m}-a_{n}|=z_{n'}$ for some $n' =n'(m,n)\leq D_N$, where $D_N \leq N^{2}$. 

We now prove that for all sufficiently large $N$ and for any $\epsilon >0$, $\|\alpha z_{n'}\|  > \frac{1}{D_N \log N (\log_2 N)^{1+\epsilon}}$. We consider a finite set of indices $\{1, 2, \dots, N_1(\alpha)-1\}$. Since $z_n$ are positive and distinct, and since $\alpha$ is such that $\alpha z_n \notin \mathbb{Z}$ for any $n$ (which is true for almost all $\alpha$), there exists a constant $\eta(\alpha) > 0$ such that
\begin{align}
    \|\alpha z_n\| \geq \eta(\alpha) \quad \text{for all} \quad n < N_1(\alpha).
\end{align}
Furthermore, since $D_N \geq N-1 $, we have
\begin{align}
\frac{1}{D_N \log N (\log_2 N)^{1+\epsilon}} \to 0 \quad \text{as} \quad N \to \infty.
\end{align}
Therefore, there exists $N_2(\alpha)$ such that for all $N \geq N_2(\alpha)$,
\begin{align}
\frac{1}{D_N \log N (\log_2 N)^{1+\epsilon}} < \eta(\alpha).
\end{align}
Define $N_0(\alpha) = \max (N_1(\alpha), N_2(\alpha))$. Then, for all $N \geq N_0(\alpha)$ and every pair of distinct elements $a_m, a_n \in A_N$ with corresponding index $n'$:
\begin{itemize}
    \item If $n' \geq N_1(\alpha)$, then using the facts that $\psi$ is decreasing and $n' \leq D_N$, we have 
    \begin{align}
            \|\alpha z_{n'}\| \geq \psi(n') \geq \psi(D_N) =\frac{1}{D_N (\log \sqrt{D_N}) (\log_2 \sqrt{D_N})^{1+\epsilon}} \geq \frac{1}{D_N \log N (\log_2 N)^{1+\epsilon}},
    \end{align}
 where we used the facts that $D_{N} \leq C_{N} \leq N^{2}$.
    \item If $n' < N_1(\alpha)$, then
    \begin{align}
    \|\alpha z_{n'}\| \geq \eta(\alpha) > \frac{1}{D_N \log N (\log_2 N)^{1+\epsilon}}.
    \end{align}
\end{itemize}
Therefore, for all $N\geq N_0(\alpha)$, and all distinct $a_{m}, a_{n} \in A_{N}$, we have
\begin{align}
\left\|\alpha\left(a_{m}-a_{n}\right)\right\|=\left\|\alpha|a_{m}-a_{n}|\right\| 
=||\alpha z_{n'}||
\geq\frac{1}{D_N \log N (\log_2 N)^{1+\epsilon}}
\end{align}
for almost all $\alpha \in [0,1]$. The proof now follows by taking the minimum over all $a_{m}, a_{n}\in A_{N}$.

$\textbf{(b)}$  Since $0\leq \alpha z_{n} \leq z_{n}$, we have $0 \leq \lfloor \alpha z_{n} \rfloor  \leq \lfloor z_{n}\rfloor. $ In particular, taking $a=\lfloor \alpha z_n \rfloor $ in \eqref{minimum bound}, we have the following for all $\alpha \in \tilde{Q}$ (with $\lambda(\tilde{Q})=1$) and all $n \geq N_0(\alpha)$, 
\begin{align}\label{lower bound for fractional part}
 & \{\alpha z_{n}\} \geq \psi(n).
\end{align} 
Therefore, for all $N\geq N_0(\alpha)$ and all $a_{m}, a_{n} \in A_{N}$ with ${m}\neq {n} $, the inequality
\begin{align}
\left\{\alpha\left|a_{m}-a_{n}\right| \right\} 
\geq\frac{1}{D_N \log N (\log_2 N)^{1+\epsilon}}   
\end{align}
holds for almost all $\alpha \in [0,1]$.

Note that $||t||=\text{ min }(\{t\},1-\{t\}) \leq 1-\{t\}$ and $||-t||=||t||$ for any $t\in \mathbb{R}$. Therefore, using \eqref{lower bound for fractional part}, for all $N\geq N_0(\alpha)$ and all distinct $a_{m}, a_{n} \in A_{N}$, we have
\begin{align}
\left\|\alpha\left(a_{m}-a_{n}\right)\right\|=\left\|\alpha|a_{m}-a_{n}|\right\| \leq 1-\{\alpha|a_{m}-a_{n}|\}
\leq 1-\frac{1}{D_N \log N (\log_2 N)^{1+\epsilon}}   
\end{align}
for almost all $\alpha \in [0,1]$. The proof now follows by taking the minimum over all $a_{m}, a_{n}\in A_{N}$.

\begin{example}
    Let $\epsilon >0$, $m \in \mathbb{N}$, and  $d \geq 0$ be a fixed constant.  Then, Theorem~\ref{thm:min_spacing} implies that for any integer sequence $(a_n)_{n \geq 1}$, the sequence $(\frac{a_n}{m}+d)_{n \geq 1}$ satisfy 
 $$\delta_{\mathrm{min}}^{\alpha}(N)\geq \frac{1}{D_N \log N (\log_{2}N)^{1+\epsilon}}$$
for almost all $\alpha \in  [0, 1]$, and for all sufficiently large $N$, where $D_N$ is the cardinality of the set of positive differences for the sequence $(\frac{a_n}{m}+d)_{n \geq 1}$.
\end{example}
\begin{cor}\label{part 0}
Let $(a_{n})_{n \geq 1}$ be a sequence of distinct positive real numbers
such that $\displaystyle\inf_{ m\neq n}|a_m -a_n| \geq c$ for some constant $c>0$. Let $\epsilon>0$, and let $f$ be a differentiable function on $[\displaystyle\inf_{n}a_{n},\infty)$, which is positive-valued on $(a_n)_{n \geq 1}$ and for which there exists $\delta_{0} >0$ such that $|f'(x)|\geq \delta_0$ for all $x \in [\displaystyle\inf_{n}a_{n},\infty)$.
 Then, for almost all $\alpha \in [0,1]$ and for all sufficiently large $N$, we have 
 $$\delta_{\mathrm{min}}^{\alpha}((f(a_n))_{n\geq 1},N)\geq \frac{1}{\widetilde{D}_N \log N (\log_{2}N)^{1+\epsilon}}, \text{ and}$$
 $$\delta_{\mathrm{min}}^{\alpha}((f(a_n))_{n\geq 1},N)\leq 1-\frac{1}{\widetilde{D}_N \log N (\log_{2}N)^{1+\epsilon}},$$
 where $\widetilde{D}_N=\#(f(A_N)-f(A_N))^{+}$ and $A_N = \{a_1, a_2, \dots, a_N\}$.
\end{cor}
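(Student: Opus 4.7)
The plan is to derive Corollary~\ref{part 0} as a direct application of Theorem~\ref{thm:min_spacing} to the transformed sequence $(f(a_n))_{n\geq 1}$. To do this, I need to verify that $(f(a_n))_{n\geq 1}$ itself satisfies the hypotheses of Theorem~\ref{thm:min_spacing}, namely that its terms are distinct positive reals with a uniform positive lower bound on pairwise gaps.

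First, I would observe that by Darboux's theorem, a derivative has the intermediate value property. Since $|f'(x)| \geq \delta_0 > 0$ on $[\inf_n a_n, \infty)$, the derivative $f'$ cannot change sign on this interval, so $f$ is strictly monotone there. In particular, $f$ is injective on the set $\{a_n : n \geq 1\}$, which ensures that $(f(a_n))_{n\geq 1}$ is a sequence of distinct real numbers. Positivity is supplied directly by hypothesis.

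Next, for any $m \neq n$, the mean value theorem gives $|f(a_m) - f(a_n)| = |f'(\xi_{m,n})| \cdot |a_m - a_n|$ for some $\xi_{m,n}$ between $a_m$ and $a_n$, which lies in $[\inf_n a_n, \infty)$. Combining $|f'(\xi_{m,n})| \geq \delta_0$ with the hypothesis $|a_m - a_n| \geq c$ yields
\begin{align}
\inf_{m \neq n} |f(a_m) - f(a_n)| \geq \delta_0 c > 0.
\end{align}
Thus $(f(a_n))_{n \geq 1}$ satisfies the uniform separation hypothesis of Theorem~\ref{thm:min_spacing} with constant $\delta_0 c$ in place of $c$.

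Finally, I would apply Theorem~\ref{thm:min_spacing} (parts \textbf{(a)} and \textbf{(b)}) to $(f(a_n))_{n \geq 1}$. The quantity playing the role of $D_N$ for this sequence is precisely $\widetilde{D}_N = \#(f(A_N) - f(A_N))^+$, by definition. The conclusion of the theorem then gives both stated inequalities for $\delta_{\min}^{\alpha}((f(a_n))_{n \geq 1}, N)$, valid for almost all $\alpha \in [0,1]$ and all sufficiently large $N$. There is no real obstacle here; the only point requiring a moment of care is the appeal to Darboux to rule out a sign change of $f'$, since $f'$ is only assumed to be a derivative and not necessarily continuous.
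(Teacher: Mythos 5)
Your proof is correct and follows essentially the same route as the paper: use the mean value theorem to transfer the separation hypothesis to the transformed sequence (obtaining $\inf_{m\neq n}|f(a_m)-f(a_n)|\geq \delta_0 c > 0$), check injectivity so that $(f(a_n))$ consists of distinct positive reals, and then apply Theorem~\ref{thm:min_spacing} with $\widetilde{D}_N$ in the role of $D_N$. The one small difference is your appeal to Darboux's theorem to show $f$ is strictly monotone; this is fine, but it is also unnecessary, since the very MVT estimate you already wrote gives $|f(x)-f(y)|\geq \delta_0|x-y|>0$ for all $x\neq y$ in $[\inf_n a_n,\infty)$, which is injectivity directly without any appeal to the sign of $f'$.
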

\begin{proof}
    By the mean value theorem, for any $m \neq n$, there exists $\xi \in [\inf (a_m,a_n),\sup (a_m,a_n)]$ such that 
    $$|f(a_m)-f(a_n)|=|f'(\xi)||a_m -a_n|\geq \delta_{0} |a_m -a_n|.$$

    Therefore, for all $m \neq n$, 
    \begin{align}
    |f(a_m)-f(a_n)|\geq \delta_{0} |a_m -a_n|\geq\delta_{0} \inf_{k\neq \ell} |a_{k} -a_{\ell}|\geq c \delta_{0}.
    \end{align}
    Hence, $\displaystyle\inf_{m\neq n}|f(a_m)-f(a_n)|\geq c \delta_{0} > 0$.
    
       Since $|f'(x)| \geq \delta_0 > 0$ for all $x$ and $f$ is differentiable, $f$ is injective. Thus, $(f(a_{n}))_{n \geq 1}$ is a sequence of distinct positive real numbers satisfying $\displaystyle\inf_{m\neq n}|f(a_m)-f(a_n)|\geq c \delta_{0} > 0$. We obtain the required bounds by applying Theorem~\ref{thm:min_spacing}.
\end{proof}

\begin{cor}\label{part 1}
Let $\epsilon>0$. Let $f$ be a differentiable function on $[1,\infty)$, which is positive-valued on $\mathbb{N}$, and for which there exists $\delta_{0} >0$ such that $|f'(x)|\geq \delta_0$ for all $x \in [1,\infty)$.
 Then, for almost all $\alpha \in [0,1]$ and for all sufficiently large $N$, we have 
 $$\delta_{\mathrm{min}}^{\alpha}((f(n))_{n \geq 1},N)\geq \frac{1}{\widetilde{D}_N \log N (\log_{2}N)^{1+\epsilon}}, \text{ and}$$
$$\delta_{\mathrm{min}}^{\alpha}((f(n))_{n \geq 1},N)\leq 1-\frac{1}{\widetilde{D}_N \log N (\log_{2}N)^{1+\epsilon}},$$
 where $\widetilde{D}_N=\#(f(A_N)-f(A_N))^{+}$ and $A_N = \{1, 2, \dots, N\}$.
\end{cor}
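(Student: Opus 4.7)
The plan is to obtain Corollary~\ref{part 1} as an immediate specialization of Corollary~\ref{part 0} to the sequence $a_n = n$, $n \geq 1$. First I would verify that this sequence satisfies the hypotheses of Corollary~\ref{part 0} with $c = 1$: the terms $a_n = n$ are distinct positive reals (indeed positive integers), and for any $m \neq n$ in $\mathbb{N}$ we have $|a_m - a_n| = |m - n| \geq 1$, so $\inf_{m \neq n} |a_m - a_n| \geq 1 > 0$. Moreover $\inf_n a_n = 1$, so the interval $[\inf_n a_n, \infty) = [1, \infty)$ coincides precisely with the domain on which $f$ is assumed to be differentiable and satisfy $|f'(x)| \geq \delta_0$; the positivity of $f$ on the sequence $(a_n)_{n \geq 1} = \mathbb{N}$ is given by hypothesis.

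Once these hypotheses are checked, I would invoke Corollary~\ref{part 0} directly. It produces, for any $\epsilon > 0$, almost every $\alpha \in [0,1]$, and all sufficiently large $N$, the two inequalities
\begin{align}
\delta_{\mathrm{min}}^{\alpha}((f(a_n))_{n\geq 1},N) &\geq \frac{1}{\widetilde{D}_N \log N (\log_{2}N)^{1+\epsilon}}, \\
\delta_{\mathrm{min}}^{\alpha}((f(a_n))_{n\geq 1},N) &\leq 1 - \frac{1}{\widetilde{D}_N \log N (\log_{2}N)^{1+\epsilon}},
\end{align}
with $\widetilde{D}_N = \#(f(A_N) - f(A_N))^+$ and $A_N = \{a_1, \ldots, a_N\} = \{1, 2, \ldots, N\}$. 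These are exactly the conclusions claimed in Corollary~\ref{part 1}, completing the proof. There is no genuine obstacle here; the corollary is a clean specialization, and essentially the entire content (reduction to a lower-bound separation $\inf_{m \neq n} |f(a_m) - f(a_n)| \geq c \delta_0$ via the mean value theorem, followed by application of Theorem~\ref{thm:min_spacing}) is already handled inside the proof of Corollary~\ref{part 0}.
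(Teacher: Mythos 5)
Your proposal is correct and is essentially identical to the paper's own proof: the paper likewise specializes Corollary~\ref{part 0} to the sequence $a_n = n$ with $c = 1$, noting that $\inf_{m \neq n}|m - n| \geq 1$. Your additional observation that $[\inf_n a_n, \infty) = [1,\infty)$ matches the stated domain of $f$ is a small but welcome clarification that the paper leaves implicit.
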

\begin{proof}
    For $a_n=n$, the set  $\{|a_{m}-a_{n}|:\, m\neq n\in \mathbb{N}\}$ is bounded below ($1$ is a lower bound) and has an infimum that satisfies
$\displaystyle\inf_{ m\neq n}|a_m -a_n| \geq c$ with $c=1$. The proof now follows from Corollary~\ref{part 0}.
\end{proof}

We now prove Theorem~\ref{thm:main}, a variant of Theorem~\ref{thm:min_spacing} in which the constant 
$c$ is replaced by a monotonically decreasing sequence $(c(N))_{N \geq 1}$.
\subsection{Proof of Theorem~\ref{thm:main}}
We first arrange the elements of $A=\displaystyle\bigcup_{N \geq 1}\left(A_{N}-A_{N}\right)^{+}$ in a sequence $\left(z_{n}\right)_{n \geq 1}$. To ensure a key monotonicity property, we choose this enumeration carefully.

For each $T \geq 2$, define the set of \emph{new} differences in step $T$ by
\begin{align}
B_T = (A_T - A_T)^+ \setminus (A_{T-1} - A_{T-1})^+.    
\end{align}
We now define the sequence \( (z_n)_{n \geq 1} \) by first listing all the elements of \( B_2 \) (in any order), then all the elements of \( B_3 \), and so on. This construction ensures that for all \( N \), we have
\begin{align}\label{cardinality}
\left\{z_{n}: 1 \leq n \leq D_{N}\right\}=\left(A_{N}-A_{N}\right)^{+},
\end{align}
with \((0 =) D_1 < D_{2} <\ldots< D_N < D_{N+1} < \ldots\). Observe that $D_N$ satisfies $ D_{N} \geq N-1$.

\noindent
For each \( n \in \mathbb{N}  \), we define \( f(n) \) as the smallest integer \( T \) such that \( z_n \in (A_T - A_T)^+ \). By our construction, if \( z_n \in B_T \), then \( f(n) = T \). Since the sequence \( (z_n)_{n \geq 1} \) is ordered by the sets \( B_2, B_3, \ldots \), the function \( n \mapsto f(n) \) is non-decreasing. Also, \( z_n \in (A_{f(n)} - A_{f(n)})^{+} \) and therefore 
\begin{equation}\label{z_n c(f(n)) relation}
z_n \geq \inf (A_{f(n)} - A_{f(n)})^{+}=\inf_{\substack{1\leq  r \neq s \leq f(n)}} |a_r - a_s| \geq  c(f(n)).  
\end{equation}

\noindent
For each \( n \in \mathbb{N} \), we set
\begin{align}
    \psi(n) = \frac{c(f(n))}{n (\log \sqrt{n}) (\log_2 \sqrt{n})^{1+\epsilon}}.
\end{align}
Note that $ (f(n))_{n\geq 1}$ is non-decreasing and $(c(n))_{n \geq 1}$ is decreasing, implying that $(c (f(n)))_{n\geq 1}$ is decreasing. Consequently, $(\psi(n))_{n \geq 1}$ is also decreasing. 

\noindent
For each \( n \in \mathbb{N}  \), we define
\begin{align}
  S_n = [0,1] \cap \bigcup_{a=0}^{\lfloor z_n \rfloor+ 1} \left( \frac{a}{z_n} - \frac{\psi(n)}{z_n}, \frac{a}{z_n} + \frac{\psi(n)}{z_n} \right).  
\end{align}
Using \eqref{z_n c(f(n)) relation}, we see that the Lebesgue measure of \( S_n \) satisfies
\begin{align}
\lambda(S_n) \leq (\lfloor z_n \rfloor + 2)  \frac{2 \psi(n)}{z_n} \leq 2 \psi(n) \left(1 + \frac{2}{z_n}\right) \leq 2 \psi(n) \left(1 + \frac{2}{c(f(n))}\right).
\end{align}
Substituting \(\psi(n)\), we obtain
\begin{align}
\lambda(S_n) \leq \frac{2 c(f(n))+4}{n (\log \sqrt{n}) (\log_2 \sqrt{n})^{1+\epsilon}} .
\end{align}
The sequence $(c(f(n)))_{n\geq1}$ being a monotone decreasing sequence of positive reals has an upper bound, namely $c(f(1))$. Therefore, summing over all \( n \in \mathbb{N}  \), we obtain
\begin{align}
\sum_{n=1}^\infty \lambda(S_n) \leq  \sum_{n=1}^\infty \frac{2c(f(1))+4}{n (\log \sqrt{n}) (\log_2 \sqrt{n})^{1+\epsilon}} < \infty,    
\end{align}
where convergence follows from the Cauchy condensation test. Therefore, by the Borel-Cantelli lemma, there exists a set $\tilde{Q} \subseteq [0,1]$ with $\lambda(\tilde{Q})=1$ (without loss of generality, we can assume that $\tilde{Q}\cap \mathbb{Q}=\phi$) such that for all $\alpha \in \tilde{Q}$ and all $n \geq N_1(\alpha)$, we have $\alpha \notin S_n$, that is,
\begin{align}\label{minimum bound for c(N) tending to 0}
 & |\alpha z_{n}-a| \geq \psi(n), \text{ for all } a \in \{0,1,\ldots, \lfloor z_{n}\rfloor+1\}.
\end{align}
Then, for all $\alpha \in [0,1]$, we have $0\leq \alpha z_n \leq z_n < \lfloor z_n\rfloor +1$.
Therefore, \eqref{minimum bound for c(N) tending to 0} ensures that for all $\alpha \in \tilde{Q}$ and for all $n \geq N_1(\alpha)$, we have 
\begin{align}
||\alpha z_{n}||=\displaystyle\min_{a\in \{0,1,\ldots,\lfloor z_{n}\rfloor+1\} }|\alpha z_{n} -a|\geq \psi(n).    
\end{align}

    $\textbf{(a)}$  Let $a_{m}, a_{n} \in A_{N}$ with $m\neq n$. Then, $|a_{m}-a_{n}|=z_{\tilde{n}}$ for some $\tilde{n} (=\tilde{n}(m,n))\leq D_N$, where $ D_N \leq N^{2}$. $f(D_N)$ is the smallest $T$ such that $z_{D_N} \in (A_T - A_T)^{+} $ and $z_{D_N}\in(A_N -A_N)^{+}$. Therefore, $f(D_N)\leq N$ for all $N$, and since $(c(n))_{n \geq 1}$ is decreasing, we have $c(f(D_N)) \geq c(N)$ for all $N$.

In the next few paragraphs, we prove $\|\alpha z_{\tilde{n}}\|  > \frac{c(N)}{D_N \log N (\log_2 N)^{1+\epsilon}}$ for all sufficiently large $N$. We consider a finite set of indices $\{1, 2, \dots, N_1(\alpha)-1\}$. Since $z_n$ are positive and distinct, and since $\alpha$ is such that $\alpha z_n$ is never an integer for any $n$ (which holds for almost all $\alpha$), there exists a constant $\eta(\alpha) > 0$ such that
\begin{align}
\|\alpha z_n\| \geq \eta(\alpha) \quad \text{for all} \quad n < N_1(\alpha).
\end{align}
Furthermore, $D_N \geq N-1 $ implies that the sequence
\begin{align}
\frac{c(N)}{D_N \log N (\log_2 N)^{1+\epsilon}} \to 0 \quad \text{as} \quad N \to \infty.
\end{align}
Therefore, there exists $N_2(\alpha)$ such that for all $N \geq N_2(\alpha)$,
\begin{align}
\frac{c(N)}{D_N \log N (\log_2 N)^{1+\epsilon}} < \eta(\alpha).
\end{align}
Now, let $N_0(\alpha) = \max\{N_1(\alpha), N_2(\alpha)\}$. Then, for all $N \geq N_0(\alpha)$ and for every pair of distinct elements $a_m, a_n \in A_N$ with the corresponding index $\tilde{n}$:
\begin{itemize}
    \item If $\tilde{n} \geq N_1(\alpha)$, then using the facts that $\psi$ is decreasing and $\tilde{n} \leq D_N$, we have 
    \begin{align}
    \|\alpha z_{\tilde{n}}\| \underbrace{\geq}_{\tilde{n} \geq N_1(\alpha)} \psi(\tilde{n}) \geq \psi(D_N) =\frac{c(f(D_N))}{D_N (\log \sqrt{D_N}) (\log_2 \sqrt{D_N})^{1+\epsilon}} \geq \frac{c(N)}{D_N \log N (\log_2 N)^{1+\epsilon}}.
    \end{align}
    \item If $\tilde{n} < N_1(\alpha)$, then
   \begin{align}
     \|\alpha z_{\tilde{n}}\| \underbrace{\geq}_{\tilde{n} < N_1(\alpha)} \eta(\alpha) \underbrace{>}_{N \geq N_{0}(\alpha)} \frac{c(N)}{D_N \log N (\log_2 N)^{1+\epsilon}}.
    \end{align}
\end{itemize}
Therefore, for all $N\geq N_0(\alpha)$, and for all distinct $a_{m}, a_{n} \in A_{N}$, we have
\begin{align}
\left\|\alpha\left(a_{m}-a_{n}\right)\right\|=\left\|\alpha|a_{m}-a_{n}|\right\| 
=||\alpha z_{\tilde{n}}||
\geq\frac{c(N)}{D_N \log N (\log_2 N)^{1+\epsilon}}
\end{align}
for almost all $\alpha \in [0,1]$. The proof now follows by taking the minimum over all $a_{m}, a_{n}\in A_{N}$.

$\textbf{(b)}$ Since $0\leq \alpha z_{n} \leq z_{n}$, we have $0 \leq \lfloor \alpha z_{n} \rfloor  \leq \lfloor z_{n}\rfloor. $ In particular, taking $a=\lfloor \alpha z_n \rfloor $ in \eqref{minimum bound for c(N) tending to 0}, we obtain that for all $\alpha \in \tilde{Q}$ (with $\lambda(\tilde{Q})=1$) and for all $n \geq N_0(\alpha)$, we have
\begin{align}\label{lower bound for fractional part00}
 & \{\alpha z_{n}\} \geq \psi(n).
\end{align} 
Therefore, for all $N\geq N_0(\alpha)$ and for all $a_{m}, a_{n} \in A_{N}$ with ${m}\neq {n} $, the inequality
\begin{align}
\left\{\alpha\left|a_{m}-a_{n}\right| \right\} 
\geq  \frac{c(N)}{D_N \log N (\log_2 N)^{1+\epsilon}}   
\end{align}
holds for almost all $\alpha \in [0,1]$.

Note that $||t||=\text{ min }(\{t\},1-\{t\}) \leq 1-\{t\}$ and $||-t||=||t||$ for any $t\in \mathbb{R}$. Therefore, using \eqref{lower bound for fractional part00}, for all $N\geq N_0(\alpha)$, and for all distinct $a_{m}, a_{n} \in A_{N}$, we have
\begin{align}
\left\|\alpha\left(a_{m}-a_{n}\right)\right\|=\left\|\alpha|a_{m}-a_{n}|\right\| \leq 1-\{\alpha|a_{m}-a_{n}|\}
\leq 1- \frac{c(N)}{D_N \log N (\log_2 N)^{1+\epsilon}}   
\end{align}
for almost all $\alpha \in [0,1]$. The proof now follows by taking the minimum over all $a_{m}, a_{n}\in A_{N}$.

\begin{cor}\label{cor:main}
Let \((a_n)_{n \geq 1}\) be a sequence of distinct positive real numbers such that $\displaystyle\inf_{\substack{1\leq  m \neq n \leq N}} |a_m - a_n| \geq c$, where $c$ is a positive constant. Let $\epsilon>0$, and let $D_N$ be defined as in Section~\ref{Notation}. Then, for almost all \( \alpha \in [0,1] \), there exists \( N_0(\alpha) \) such that for all \( N \geq N_0(\alpha) \), we have
\begin{align}
    \delta^{\alpha}_{\min}(N) \geq \dfrac{\max(1,c)}{D_N \log N (\log_2 N)^{1+\epsilon}}.
\end{align} 
\end{cor}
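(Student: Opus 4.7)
The plan is to derive Corollary~\ref{cor:main} by combining Theorem~\ref{thm:min_spacing} and Theorem~\ref{thm:main}, taking whichever bound is stronger according to whether $c \geq 1$ or $c < 1$.

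First, I would observe that the hypothesis $\inf_{1 \leq m \neq n \leq N} |a_m - a_n| \geq c$ for a positive constant $c$ is precisely the hypothesis of Theorem~\ref{thm:min_spacing}. Applying Theorem~\ref{thm:min_spacing}(a) therefore yields a set of full Lebesgue measure, and for each $\alpha$ in this set a threshold $N_0^{(1)}(\alpha)$, such that
\begin{align}
\delta^\alpha_{\min}(N) \geq \frac{1}{D_N \log N (\log_2 N)^{1+\epsilon}} \quad \text{for all } N \geq N_0^{(1)}(\alpha).
\end{align}

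Next, I would apply Theorem~\ref{thm:main}(a) with the constant sequence $c(N) := c$ for every $N \geq 1$. The sequence $(c(N))_{N \geq 1}$ is trivially monotone decreasing (in the non-strict sense used throughout the theorem, since the relevant ingredient in the proof is that $(c(f(n)))_{n \geq 1}$ is non-increasing, which is automatic when $c(N)$ is constant), and the hypothesis $\inf_{1 \leq m \neq n \leq N} |a_m - a_n| \geq c = c(N)$ is satisfied. The theorem then provides, on a set of full measure, a threshold $N_0^{(2)}(\alpha)$ such that
\begin{align}
\delta^\alpha_{\min}(N) \geq \frac{c}{D_N \log N (\log_2 N)^{1+\epsilon}} \quad \text{for all } N \geq N_0^{(2)}(\alpha).
\end{align}

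Finally, I would intersect the two full-measure sets (still of full measure) and set $N_0(\alpha) := \max(N_0^{(1)}(\alpha), N_0^{(2)}(\alpha))$. For $N \geq N_0(\alpha)$, both inequalities hold simultaneously, so taking the maximum of the two lower bounds gives
\begin{align}
\delta^\alpha_{\min}(N) \geq \frac{\max(1, c)}{D_N \log N (\log_2 N)^{1+\epsilon}},
\end{align}
as required. There is essentially no obstacle: the argument is a clean case split packaged as a maximum, and the only mildly delicate point is verifying that the constant sequence $c(N) \equiv c$ is admissible as input to Theorem~\ref{thm:main}, which is immediate from the proof of that theorem since monotonicity is only used in the non-strict form.
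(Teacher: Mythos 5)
Your proof is correct and follows the same route as the paper: apply Theorem~\ref{thm:min_spacing} for the $\max(1,\cdot)$-part and Theorem~\ref{thm:main} with the constant sequence $c(N)\equiv c$ for the $\max(\cdot,c)$-part, then combine. The extra care you take in intersecting the two full-measure sets and taking the maximum of the two thresholds is a welcome but routine elaboration of the paper's terse "combining this observation with Theorem~\ref{thm:min_spacing}."
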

\begin{proof}
Taking $c(N)=c$ for all $N$, in Theorem~\ref{thm:main}, we obtain that for any \( \epsilon > 0 \) and for almost all \( \alpha \in [0,1] \),  \(\delta^{\alpha}_{\min}(N) \geq \dfrac{c}{D_N \log N (\log_2 N)^{1+\epsilon}}\) for all sufficiently large $N$. The proof now follows by combining this observation with Theorem~\ref{thm:min_spacing}. 
\end{proof}
\begin{remark}
    In particular, the Fibonacci sequence $(F_n)_{n \geq 2}$ (with $F_2 = 1, F_3 = 2, F_4 = 3, \dots$) satisfies $\inf_{m \neq n} |F_m - F_n| = 1$, and hence is a suitable choice for $a_n$ in Corollary~\ref{cor:main}.
\end{remark}
\begin{remark}[Beatty sequence]
    Let $a_n =\lfloor n\theta \rfloor$, where $\theta>1$ is an irrational number. Note that
$\lfloor a \rfloor - \lfloor b \rfloor =
\begin{cases}
\lfloor a - b \rfloor & \text{if } \{a\} \geq \{b\} \\
\lfloor a - b \rfloor + 1 & \text{if } \{a\} < \{b\}
\end{cases}$, showing that the difference $\lfloor (n+1)\theta \rfloor-\lfloor n\theta \rfloor $ can take only one of the two values $\lfloor \theta \rfloor$ or $\lfloor \theta \rfloor +1$. Therefore, for $m>n$, $|a_m -a_n| \in [(m-n)\lfloor \theta\rfloor, (m-n)(\lfloor \theta \rfloor+1) ]\cap \mathbb{N}$, that is, $\displaystyle\inf_{m\neq n} |a_m -a_n|\geq \lfloor \theta \rfloor\geq 1. $ Also, the inequality $\lfloor (n+1)\theta \rfloor -\lfloor n \theta \rfloor \geq \lfloor \theta  \rfloor \geq 1$ ensures that $(a_n)_{n \geq1}$ is a sequence of distinct positive integers. Therefore, applying Corollary~\ref{cor:main}, we find that for any \( \epsilon > 0 \) and for almost all \( \alpha \in [0,1] \), there exists \( N_0(\alpha) \) such that for all \( N \geq N_0(\alpha) \),
 $$\delta_{\mathrm{min}}^{\alpha}((\lfloor n\theta \rfloor)_{n\geq 1},N) \geq \dfrac{ \lfloor \theta \rfloor }{D_N \log N (\log_2 N)^{1+\epsilon}}\geq \dfrac{ \lfloor \theta \rfloor }{N^{2} \log N (\log_2 N)^{1+\epsilon}}.$$ 
In particular, one can consider the sequences $\lfloor \sqrt{2}n \rfloor $ and $\lfloor n \phi\rfloor$, where $\phi=\frac{1+\sqrt{5}}{2}$ is the golden ratio. 
\end{remark}
\begin{example}
    For positive constants $a$ and $d$, let $a_n =a+(n-1)d$. Then $ D_N = \#(A_N - A_N)^+=\# \{d,2d,\ldots, (N-1)d \}=N-1\leq N$, and we obtain
    $$\delta^{\alpha}_{\min}(N) \geq \frac{1}{D_N \log N (\log_2 N)^{1+\epsilon}}\geq \frac{1}{N \log N (\log_2 N)^{1+\epsilon}},$$ 
    which is $\gg \frac{1}{N (\log N)^{1+\epsilon}}$ for any $\epsilon >0$.
    
    In particular, taking $a=d=1$, that is, for the sequence $a_n =n$, we recover the result obtained in \cite[Section 3]{Rudnick}.
\end{example}

\begin{example}
Let $\theta \geq 1$ and $a_n = n^{\theta}$. Since $(a_n)_{n \geq 1}$ is a strictly increasing sequence and $f(x)=(x+1)^{\theta}-x^{\theta}$ is a monotone increasing function in $[1,\infty)$, we have
$|a_m -a_n| \geq |a_{n+1} -a_n| =(n+1)^{\theta}-n^{\theta} \geq 2^{\theta }-1$ for all $m>n$. This ensures that $\displaystyle\inf_{\substack{1\leq  m \neq n \leq N}} |a_m - a_n|=\displaystyle\inf_{\substack{1\leq  n<m\leq N}} |a_m - a_n| \geq 2^{\theta}-1$.
Therefore, using Corollary~\ref{cor:main}, we obtain that for almost all $\alpha \in [0,1]$, and for all sufficiently large $N$, 
 $$\delta_{\mathrm{min}}^{\alpha}((n^\theta)_{n\geq 1},N)\geq \frac{2^\theta-1}{D_N \log N (\log_{2}N)^{1+\epsilon}}\geq \frac{2^{\theta}-1}{N^{2} \log N (\log_{2}N)^{1+\epsilon}},\text{ and }$$
$$\delta_{\mathrm{min}}^{\alpha}((n^\theta)_{n\geq 1},N)\leq 1-\frac{2^{\theta}-1}{D_N \log N (\log_{2}N)^{1+\epsilon}}\leq 1-\frac{2^{\theta}-1}{N^{2} \log N (\log_{2}N)^{1+\epsilon}}, \text{ where}$$
 \( A_N = \{1^{\theta},2^{\theta}, \dots, N^{\theta}\} \) and \( D_N = \#(A_N - A_N)^+ \).
\end{example}

\begin{remark}
    The following functions can be taken in Corollary~\ref{part 1} as suitable choices for $f(x)\colon$ $\sinh (x)=\frac{e^{x}-e^{-x}}{2}$, $e^{x}$, $e^{x}+x^{k}$, $x^{k} \log{(x+1)} $, $x \log x$, where $k\in \mathbb{N}$.   
\end{remark}
\begin{example}
A sequence $(a_n)_{n \geq 1}$ of positive real numbers is called \textbf{lacunary} if there exists a constant $D > 1$ such that $\frac{a_{n+1}}{a_n} \geq D$ for all $n \in \mathbb{N}$.

A lacunary sequence satisfies all the conditions of Theorem~\ref{thm:min_spacing}.
\begin{enumerate}
    \item It consists of distinct positive real numbers (since $a_{n+1} \geq D a_n > a_n$ for all $n$).
    \item For $m\neq n \in \mathbb{N}$, using lacunarity, we obtain
\begin{align}
|a_m - a_n| \geq a_{\max(m,n)} - a_{\max(m,n)-1} \geq (D - 1)a_{\max(m,n)-1} \geq (D - 1)a_1.
\end{align}
That is, $(a_n)_{n \geq 1}$ satisfies the condition $\displaystyle\inf_{m \neq n} |a_m - a_n| \geq c$ with $c = (D - 1)a_1>0$.
\end{enumerate}

Therefore, Theorem~\ref{thm:min_spacing} applies to lacunary sequences.
Since the sequence $a_n\coloneqq c^{n}$, where $c>1$ is a fixed constant, is lacunary,  Theorem~\ref{thm:min_spacing} also applies to such sequences, in particular, to the sequences $\left(e^{n}\right)_{n \geq 1}$, $\left(\pi^{n}\right)_{n \geq 1}$, and $\left((\frac{3}{2})^{n}\right)_{n \geq 1}$. 
\end{example}

\begin{remark}
 Let $f$ satisfy the hypotheses in Corollary~\ref{part 0}, and let $(a_n)_{n \geq 1}$ be a lacunary sequence. Then, for almost all $\alpha \in [0,1]$ and for all sufficiently large $N$, we have 
 $$\delta_{\mathrm{min}}^{\alpha}((f(a_n))_{n\geq 1},N)\geq \frac{1}{\widetilde{D}_N \log N (\log_{2}N)^{1+\epsilon}}, \text{ and}$$
 $$\delta_{\mathrm{min}}^{\alpha}((f(a_n))_{n\geq 1},N)\leq 1-\frac{1}{\widetilde{D}_N \log N (\log_{2}N)^{1+\epsilon}},$$
 where $\widetilde{D}_N=\#(f(A_N)-f(A_N))^{+}$.
\end{remark}

\begin{cor}\label{part tending to 0}
Let \((a_n)_{n \geq 1}\) be a sequence of distinct positive real numbers such that $\displaystyle\inf_{\substack{1\leq  m \neq n \leq N}} |a_m - a_n| \geq c(N)$, where $(c(N))_{N\geq1}$ is a monotone decreasing sequence of positive reals. Let $f$ be a differentiable function on $[\displaystyle\inf_{n}a_{n},\infty)$, which is positive-valued on $(a_n)_{n \geq 1}$ and for which there exists $\delta_{0} >0$ such that $|f'(x)|\geq \delta_0$ for all $x \in [\displaystyle\inf_{n}a_{n},\infty)$.
 Then, for almost all $\alpha \in [0,1]$ and for all sufficiently large $N$, we have 
    
$\textbf{(a)}$ $$\delta_{\mathrm{min}}^{\alpha}((f(a_n))_{n\geq 1},N)\geq \frac{c(N)\delta_{0}}{\widetilde{D}_N \log N (\log_{2}N)^{1+\epsilon}}, \text{ and}$$

$\textbf{(b)}$ $$\delta_{\mathrm{min}}^{\alpha}((f(a_n))_{n\geq 1},N)\leq 1-\frac{c(N) \delta_{0}}{\widetilde{D}_N \log N (\log_{2}N)^{1+\epsilon}}, $$
where \(  \widetilde{D}_N = \#(f(A_N) - f(A_N))^+ \).
\end{cor}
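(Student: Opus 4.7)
The plan is to reduce this corollary directly to Theorem~\ref{thm:main} applied to the transformed sequence $(f(a_n))_{n \geq 1}$, in complete analogy with how Corollary~\ref{part 0} was reduced to Theorem~\ref{thm:min_spacing}. The only genuinely new ingredient is tracking how the variable lower bound $c(N)$ propagates through the Mean Value Theorem to give a corresponding variable lower bound for the spacings of $(f(a_n))_{n \geq 1}$.

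First, I would verify that $(f(a_n))_{n \geq 1}$ satisfies the hypotheses of Theorem~\ref{thm:main}. Since $|f'(x)| \geq \delta_0 > 0$ on $[\inf_n a_n, \infty)$, the function $f$ is strictly monotone and hence injective on this interval, so $(f(a_n))_{n \geq 1}$ is a sequence of distinct positive real numbers (positivity is a direct hypothesis). Next, for any $1 \leq m \neq n \leq N$, applying the MVT on the interval $[\inf(a_m,a_n), \sup(a_m,a_n)] \subseteq [\inf_k a_k, \infty)$ yields some $\xi$ there with
\begin{align}
|f(a_m) - f(a_n)| = |f'(\xi)|\, |a_m - a_n| \geq \delta_0 |a_m - a_n| \geq \delta_0 \inf_{1 \leq r \neq s \leq N} |a_r - a_s| \geq \delta_0\, c(N).
\end{align}
Taking the infimum over $1 \leq m \neq n \leq N$ gives $\inf_{1 \leq m \neq n \leq N} |f(a_m) - f(a_n)| \geq \delta_0\, c(N)$, and the sequence $(\delta_0\, c(N))_{N \geq 1}$ is a monotone decreasing sequence of positive reals since $(c(N))_{N \geq 1}$ is.

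Finally, I would apply Theorem~\ref{thm:main} to the sequence $(f(a_n))_{n \geq 1}$ with the lower bound sequence $\tilde{c}(N) := \delta_0\, c(N)$ and with $\widetilde{D}_N = \#(f(A_N) - f(A_N))^+$ playing the role of $D_N$. Part (a) of that theorem gives
\begin{align}
\delta^{\alpha}_{\min}((f(a_n))_{n \geq 1}, N) \geq \frac{\tilde{c}(N)}{\widetilde{D}_N \log N (\log_2 N)^{1+\epsilon}} = \frac{\delta_0\, c(N)}{\widetilde{D}_N \log N (\log_2 N)^{1+\epsilon}},
\end{align}
for almost all $\alpha$ and all sufficiently large $N$, which is exactly \textbf{(a)}. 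Part \textbf{(b)} follows identically from Theorem~\ref{thm:main}(b).

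There is essentially no obstacle: the work was already done in proving Theorem~\ref{thm:main}, and the MVT step is the same computation as in Corollary~\ref{part 0}. The only point requiring a bit of care is ensuring the enumeration and the convergent Borel–Cantelli series in Theorem~\ref{thm:main} genuinely apply to the transformed sequence, but this is automatic since Theorem~\ref{thm:main} is stated for any sequence satisfying its hypotheses.
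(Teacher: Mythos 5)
Your proposal is correct and follows essentially the same route as the paper: apply the Mean Value Theorem to deduce $\displaystyle\inf_{1\leq m\neq n\leq N}|f(a_m)-f(a_n)|\geq \delta_0\, c(N)$, note that $\delta_0\,c(N)$ inherits monotone decrease and that $|f'|\geq\delta_0>0$ (together with Darboux's theorem) forces injectivity of $f$, and then invoke Theorem~\ref{thm:main} for the transformed sequence with $\widetilde{D}_N$ in place of $D_N$. There is no substantive difference from the paper's argument.
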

\begin{proof}
    By the mean value theorem, for any $m \neq n$, there exists $\xi \in [\inf (a_m,a_n),\sup (a_m,a_n)]$ such that  $$|f(a_m)-f(a_n)|=|f'(\xi)||a_m -a_n|\geq \delta_{0} |a_m -a_n|.$$

     Therefore, for all $1\leq m', n' \leq N$ with $m'\neq n'$, 
    $$|f(a_{m'})-f(a_{n'})|\geq \delta_{0} |a_{m'} -a_{n'}|\geq\delta_{0} \inf_{1 \leq  m\neq n \leq N} |a_{m} -a_{n}|\geq c (N) \delta_{0}.$$ 
 In particular, $\displaystyle\inf_{\substack{1\leq  m \neq n \leq N}} |f(a_m) -f( a_n)| \geq d(N) $, where $d(N)=c(N) \delta_{0}$.

Since $c(N)$ is a monotone decreasing sequence of positive reals, so is $d(N)$. Since $|f'(x)| \geq \delta_0 > 0$ for all $x$ and $f$ is differentiable, $f$ is injective. Thus, $(f(a_{n}))_{n \geq 1}$ is a sequence of distinct positive real numbers satisfying $\displaystyle\inf_{1 \leq m\neq n \leq N}|f(a_m)-f(a_n)|\geq d(N)$. We obtain the required bound in part $\textbf{(a)}$ by applying Theorem~\ref{thm:main}.
The proof of part $\textbf{(b)}$ follows from Theorem~\ref{thm:main}.
\end{proof}
\begin{example}
Let $\theta <1$ and $f(x)=x^{\theta}$. Then $f$ is differentiable and positive-valued on $\mathbb{N}$, satisfying $|f'(x)|=|\theta| |x|^{\theta -1} \geq |\theta| N^{\theta -1} $ for all $x \in [1,N]$. Let $a_n =f(n)=n^{\theta}$ for all $n \in \mathbb{N}$. Therefore, by the mean value theorem, $\displaystyle\inf_{\substack{1\leq  m \neq n \leq N}} |a_m - a_n| \geq \displaystyle\inf_{\substack{1\leq  m \neq n \leq N}} |\theta| N^{\theta -1}|m-n| \geq |\theta| N^{\theta -1} $. The assumption $\theta <1$ ensures that the sequence $(c(N))_{N \geq 1}$, where $c(N):=|\theta| N^{\theta -1}$ is a monotone decreasing.

Therefore, using Theorem~\ref{thm:main}, we obtain that for almost all $\alpha \in [0,1]$ and for all sufficiently large $N$,  
    
$\textbf{(a)}$ $$\delta_{\mathrm{min}}^{\alpha}((n^\theta)_{n\geq 1},N)\geq \frac{|\theta| N^{\theta -1}}{D_N \log N (\log_{2}N)^{1+\epsilon}}\geq \frac{|\theta|}{N^{3-\theta} \log N (\log_{2}N)^{1+\epsilon}}, \text{ and}$$

$\textbf{(b)}$ $$\delta_{\mathrm{min}}^{\alpha}((n^\theta)_{n\geq 1},N)\leq 1-\frac{|\theta| N^{\theta -1}}{D_N \log N (\log_{2}N)^{1+\epsilon}}\leq 1-\frac{|\theta| }{N^{3-\theta} \log N (\log_{2}N)^{1+\epsilon}}, \text{ where}$$
 \( A_N = \{1^{\theta},2^{\theta}, \dots, N^{\theta}\} \) and \( D_N = \#(A_N - A_N)^+ \).

In particular, when $\theta=\frac{1}{2}$, we have 
 $$\delta_{\mathrm{min}}^{\alpha}((\sqrt{n})_{n\geq 1},N)\geq \frac{1}{2N^{5/2} \log N (\log_{2}N)^{1+\epsilon}}$$
for almost all $\alpha \in [0,1]$ and for all sufficiently large $N$. We remark that the lower bound we obtained for the minimal gap of the sequence $(\sqrt{n})_{n \geq 1}$ is weaker than the optimal bound, which is $\sim \frac{1}{2N^{3/2}}$ $($see \cite[Theorem $1.9$]{Regavim}$)$. For any $\theta $ which is very close to $1$ $(\theta <1)$, the lower bound we obtain for the minimal gap of the sequence $(n^{\theta})_{n \geq 1}$ is nearly optimal, which is believed to be $\sim \frac{1}{N^2}$.
\end{example}

\begin{cor}\label{part tending to 0 part 2}
Let \((a_n)_{n \geq 1}\) be a sequence of distinct positive real numbers such that
\begin{align}
\inf_{\substack{1\leq  m \neq n \leq N}} |a_m - a_n| \geq N^{\delta-1}
\end{align}
for some $\delta \in [0,1)$. Let $f$ be a differentiable function on $[\displaystyle\inf_{n}a_{n},\infty)$, which is positive-valued on $(a_n)_{n \geq 1}$ and for which there exists $\delta_{0} >0$ such that $|f'(x)|\geq \delta_0$ for all $x \in [\displaystyle\inf_{n}a_{n},\infty)$.
 Then, for almost all $\alpha \in [0,1]$ and for all sufficiently large $N$, we have 
    
$\textbf{(a)}$ $$\delta_{\mathrm{min}}^{\alpha}((f(a_n))_{n\geq 1},N)\geq \frac{N^{\delta -1}\delta_{0}}{\widetilde{D}_N \log N (\log_{2}N)^{1+\epsilon}} \geq \frac{\delta_{0}}{N^{3-\delta} \log N (\log_{2}N)^{1+\epsilon}}, \text{ and}$$

$\textbf{(b)}$ $$\delta_{\mathrm{min}}^{\alpha}((f(a_n))_{n\geq 1},N)\leq 1-\frac{N^{\delta -1} \delta_{0}}{\widetilde{D}_N \log N (\log_{2}N)^{1+\epsilon}} \leq 1-\frac{\delta_{0}}{N^{3-\delta} \log N (\log_{2}N)^{1+\epsilon}}.$$
where \(  \widetilde{D}_N = \#(f(A_N) - f(A_N))^+ \).
\end{cor}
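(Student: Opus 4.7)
The plan is to deduce Corollary~\ref{part tending to 0 part 2} as a direct specialization of Corollary~\ref{part tending to 0} with the choice $c(N) := N^{\delta - 1}$. The main verification is that this choice satisfies the hypotheses on the separation sequence required there. Since $\delta \in [0, 1)$ forces the exponent $\delta - 1 \in [-1, 0)$, the map $N \mapsto N^{\delta - 1}$ takes strictly positive values on $\mathbb{N}$ and is strictly decreasing, so $(c(N))_{N \geq 1}$ is indeed a monotone decreasing sequence of positive reals. By hypothesis, $\inf_{1 \leq m \neq n \leq N}|a_m - a_n| \geq N^{\delta - 1} = c(N)$, which is exactly the required separation condition. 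The hypotheses on $f$ (differentiability on $[\inf_n a_n, \infty)$, positivity on $(a_n)_{n \geq 1}$, and the uniform lower bound $|f'(x)| \geq \delta_0 > 0$) are identical in both statements, so invoking Corollary~\ref{part tending to 0} yields directly the first inequality in each of parts (a) and (b).

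For the second, coarser inequalities stated in terms of $N^{3 - \delta}$, I would apply the trivial cardinality bound $\widetilde{D}_N = \#(f(A_N) - f(A_N))^+ \leq \binom{N}{2} \leq N^2$. This bound follows because an injective $f$ produces at most $\binom{N}{2}$ distinct positive differences among $N$ points, and injectivity of $f$ itself is a consequence of differentiability together with the strict lower bound $|f'| \geq \delta_0 > 0$. Substituting $\widetilde{D}_N \leq N^2$ into the denominator of the bound in part (a) shrinks the lower bound to $\tfrac{\delta_0}{N^{3-\delta} \log N (\log_2 N)^{1+\epsilon}}$; substituting the same bound into the $1 - (\cdot)$ expression in part (b) correspondingly enlarges the upper bound, yielding the desired second inequalities.

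There is essentially no obstacle here: the corollary is a direct polynomial-decay specialization of Corollary~\ref{part tending to 0}, which itself rests on Theorem~\ref{thm:main}. The only small sanity check is the monotonicity and strict positivity of the specialized separation function $c(N) = N^{\delta - 1}$, both of which are immediate from the range $\delta \in [0, 1)$. All the technical work — the careful enumeration of the difference set, the application of Borel--Cantelli, and the mean value theorem step transferring the separation from $(a_n)$ to $(f(a_n))$ — has already been carried out in the proofs of Theorem~\ref{thm:main} and Corollary~\ref{part tending to 0}, so the present statement reduces to a single substitution of $c(N)$ together with one cardinality estimate.
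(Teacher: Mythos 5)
Your proposal is correct and takes essentially the same route as the paper, which simply specializes Corollary~\ref{part tending to 0} with $c(N) = N^{\delta - 1}$; you have merely spelled out the monotonicity check and the cardinality bound $\widetilde{D}_N \leq N^2$ that the paper leaves implicit.
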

\begin{proof}
The proof follows immediately from Corollary~\ref{part tending to 0} taking $c(N) = N^{\delta - 1}$, where $\delta < 1$.
\end{proof}
\begin{example}
    Let $f(x)=\log (1+x)$ and $a_n=f(n)$. Then, the mean value theorem ensures that there exists $c_{m,n} \in (\inf (m,n), \sup (m,n))$ such that $|a_m -a_n|=|f'(c_{m,n})||m-n|\geq \frac{1}{1+c_{m,n}}$, that is, $\displaystyle\inf_{1 \leq m \neq n \leq N} |a_m -a_n| \geq \frac{1}{N+1}\geq \frac{1}{2N}$. Therefore, for any \( \epsilon > 0 \) and for almost all \( \alpha \in [0,1] \), there exists \( N_0(\alpha) \) such that for all \( N \geq N_0(\alpha) \),
     \(\delta^{\alpha}_{\min}((\log(1+n))_{n \geq 1},N) \geq \dfrac{1}{2D_N N\log N (\log_2 N)^{1+\epsilon}}\geq \dfrac{1}{2N^{3} \log N (\log_2 N)^{1+\epsilon}}\), where \( A_N = \{\log 2,\log 3, \dots, \log (1+N)\} \) and \( D_N = \#(A_N - A_N)^+ \).
\end{example}

\section{Proofs of Theorems~\ref{upper bound},~\ref{upper bound independent of a_N} and~\ref{upper bound in terms of H_N for sufficiently large N}: Upper Bounds}  \label{Proof of Theorem 6.4 upper bound}
The convergence part of the Borel–Cantelli lemma holds unconditionally, whereas the divergence part requires the hypothesis of pairwise independence. Consequently, proving divergence in the context of metric number theory often involves greater technical difficulty than proving convergence (see \cite{BV2023} for a discussion). In Section~\ref{outline}, we briefly outline the heuristics behind the proof. In Section~\ref{upper bound depending on size a_N}, we provide a proof of Theorem~\ref{upper bound}, where the upper bound depends on the size of $a_N$. Section~\ref{Overlap estimates and auxiliary lemmas} compiles the necessary technical lemmas and overlap estimates required for the subsequent analysis in Theorem~\ref{upper bound independent of a_N}, and in Section~\ref{section upper bound independent of aN}, we finally present the proof of Theorem~\ref{upper bound independent of a_N}, where the upper bound is independent of the size of $a_N$. In Section~\ref{Subsection:upper bound in terms of H_N for sufficiently large N}, we prove Theorem~\ref{upper bound in terms of H_N for sufficiently large N}, where the upper bound result holds for all except finitely many $N$.

\subsection{Outline and heuristics}\label{outline} 
The proofs follow the modern approach to metric Diophantine approximation problems. The fundamental challenge is that the approximation sets under consideration lack the stochastic independence required for a direct application of the second Borel-Cantelli lemma, as demonstrated by the classical Duffin-Schaeffer counterexample (see \cite{Duffin1941KhintchinesPI}).

Instead, we rely on the concept of ``quasi-independence on average'' (See Lemma~\ref{chung erdos inequality} and \cite{BDV2006, BV2010, BV2023}), where the idea is to define the $S_n$ such that the sum of their measures diverges, while simultaneously controlling the measure of their pairwise overlaps ($S_m \cap S_n$ for $m \neq n$) to apply a suitable convergence criterion. Following Rudnick's strategy, we employ $L^2$ methods that naturally lead to the study of greatest common divisor (GCD) sums, which play a central role in this area.

Recent optimal bounds for GCD sums obtained by de la Bretèche and Tenenbaum (see \cite{delabretèche}) introduce a specific growth factor that determines the threshold for successful application of this method. Specifically, for the $L^2$ method to succeed, the sum of measures of our approximation sets must exceed this GCD sum growth factor. This explains the precise form of the conditions appearing in our theorems.
\subsection{Proof of Theorem~\ref{upper bound}}\label{upper bound depending on size a_N}
We begin by stating a conjecture that is now a theorem due to the work of Koukoulopoulos and Maynard (see \cite{KoukouMaynard}). This result will be useful in the proof of Theorem~\ref{upper bound}.
\begin{thm}[Duffin-Schaeffer Conjecture]\label{Duffin-Schaeffer Conjecture}
        Let $\psi: \mathbb{N} \to \mathbb{R}_{\geq 0}$ be a function such that $\displaystyle\sum_{q=1}^{\infty} \frac{\psi(q)\phi(q)}{q}=\infty$. Let $\mathcal{A}$ be the set of $\alpha \in [0,1]$ for which the inequality 
        \begin{equation}\label{DS}
            \left|  \alpha -\frac{a}{q}\right|\leq \frac{\psi(q)}{q},\, (a,q)=1, \,q\geq 1
        \end{equation} has infinitely many solutions in $a$ and $q$. Then $\mathcal{A}$ has the Lebesgue measure $1$.
   \end{thm}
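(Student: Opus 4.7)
The plan is to follow the strategy of Koukoulopoulos and Maynard, since no genuinely different approach to the Duffin--Schaeffer conjecture is currently known. I would first dispose of the convergence direction as a routine Borel--Cantelli calculation (which is not needed here, but is a useful warm-up), and then reduce the divergence side to a positive-measure statement via Gallagher's zero--one law: $\mathcal{A}$ is invariant under rational translations and is a $\limsup$ of open sets, which forces $\lambda(\mathcal{A})\in\{0,1\}$. It therefore suffices to produce a uniform positive lower bound on $\lambda\bigl(\bigcup_{q\leq Q}A_q\bigr)$ as $Q\to\infty$, where $A_q=\{\alpha\in[0,1]:|\alpha-a/q|\leq \psi(q)/q \text{ for some } (a,q)=1\}$.

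The natural route is a second-moment (Chung--Erd\H{o}s) estimate. I would aim to establish
\[
\sum_{q_1,q_2\leq Q}\lambda(A_{q_1}\cap A_{q_2}) \;\ll\; \Bigl(\sum_{q\leq Q}\lambda(A_q)\Bigr)^{2},
\]
which, after separating the genuinely independent contribution from the arithmetic overlap, is equivalent to a uniform bound on a GCD-weighted sum of the shape
\[
S(Q)\;=\;\sum_{q_1,q_2\leq Q}\frac{\psi(q_1)\phi(q_1)}{q_1}\cdot\frac{\psi(q_2)\phi(q_2)}{q_2}\cdot F(q_1,q_2),
\]
with a kernel $F$ encoding the overlap geometry and concentrated on pairs with large $\gcd(q_1,q_2)$. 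The factor $\phi(q)/q$ in each term is precisely the arithmetic benefit of the coprimality condition $(a,q)=1$, and it is what distinguishes Duffin--Schaeffer from Khintchine's theorem.

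The main obstacle is that $S(Q)$ is \emph{not} automatically $O\bigl((\sum_{q\leq Q}\psi(q)\phi(q)/q)^{2}\bigr)$: Duffin and Schaeffer's original counterexamples, supported on integers with unusually many small prime factors, show that a direct Cauchy--Schwarz or anatomy-of-integers bound must fail. I would localise by partitioning the denominators into classes according to the dyadic profile of $\psi(q)\phi(q)/q$ and the prime factorisation of $q$, then bound the overlap contribution piece by piece. The combinatorial backbone is the \emph{GCD graph} of Koukoulopoulos--Maynard, whose vertices are the surviving denominators, whose edges are weighted by $\gcd$, and which one iteratively compresses by stripping out ``bad'' prime factors while keeping the total edge weight comparable. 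The technical heart is an iterative-compression lemma: either the current graph is already quasi-independent on average, or one can pass to a subgraph of comparable weight but strictly simpler prime structure. Without this compression step one only obtains $S(Q)\ll \bigl(\sum\psi(q)\phi(q)/q\bigr)^{2}(\log\log Q)^{O(1)}$, which is too weak to conclude full measure when $\psi$ is sparse. Once the compression lemma is in place, Chung--Erd\H{o}s yields positive measure, and Gallagher's zero--one law promotes this to full measure, completing the proof.
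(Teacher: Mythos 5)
The paper does not prove this theorem at all: it is imported verbatim as the Koukoulopoulos--Maynard theorem and used as a black box (the sentence immediately preceding the statement says exactly this and cites \cite{KoukouMaynard}). So there is no in-paper proof for your proposal to be compared against. What you have written is a faithful high-level outline of the actual Koukoulopoulos--Maynard argument: the reduction to a positive-measure statement via a zero--one law, the Chung--Erd\H{o}s second-moment method, the identification of the obstruction as a GCD-weighted sum, and the GCD-graph iterative-compression lemma as the technical heart. You correctly identify the role of the coprimality weight $\varphi(q)/q$, why a naive second-moment or Cauchy--Schwarz bound must fail (the Duffin--Schaeffer counterexamples), and that without the compression step one only gets a bound off by a power of $\log\log Q$. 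One small imprecision: the zero--one reduction is not literally ``invariance under rational translations'' --- Gallagher's and Cassels' zero--one laws rest on a quasi-invariance/dilation argument rather than exact translation invariance --- but the conclusion $\lambda(\mathcal{A})\in\{0,1\}$ is what matters and your use of it is correct. The genuine gap is that, as written, this is a strategy summary and not a proof: the compression lemma, which you yourself flag as the heart of the matter, is only gestured at, and carrying it out (quality increments, stripping bad primes, the model GCD graph, the anatomy-of-integers inputs) occupies essentially the entirety of the Koukoulopoulos--Maynard paper. Since the role of this statement in the present article is purely that of an imported tool, citing \cite{KoukouMaynard} --- as the paper does --- is the correct move, and anything short of reproducing their full argument should not be presented as a self-contained proof.
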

\begin{remark}
    Here we restrict our attention to $\alpha \in [0,1]$ only, because if $\alpha$ has an approximation given by inequality \eqref{DS}, then so does $\alpha+r$ for any integer $r$, with $a$ replaced by $a+rq$. Note that $(a,q)=1$ implies $(a+rq,q)=1$.
\end{remark}
\begin{lemma}\label{H_N bound for general sequence}
Let $(a_n)_{n \geq 1}$ be a sequence of real numbers such that $\displaystyle\inf_{\substack{1 \leq m \neq n \leq N}} |a_m - a_n| \geq 1$ for all $N \in \mathbb{N}$.
Define the finite set $A_N = \{a_1, \dots, a_N\}$ and let $H_N = \#\lfloor (A_N - A_N)^+ \rfloor$. Then $H_N $ satisfies the recurrence
$\lceil \frac{N}{2} \rceil  \leq H_{N+1} \leq H_N + N$. Consequently, $H_{N+1} \leq 3 H_{N}+1\leq 4 H_{N}$ holds for $N \geq 3$.
\end{lemma}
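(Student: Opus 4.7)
The plan is to establish the two-sided bound on $H_{N+1}$ and then derive the multiplicative consequence. For the upper bound, I would use the decomposition
\begin{align}
\lfloor (A_{N+1} - A_{N+1})^+ \rfloor \;=\; \lfloor (A_N - A_N)^+ \rfloor \;\cup\; \bigl\{\lfloor |a_{N+1} - a_j| \rfloor : 1 \leq j \leq N\bigr\},
\end{align}
which presents $\lfloor (A_{N+1} - A_{N+1})^+ \rfloor$ as the union of a set of size $H_N$ with a set of size at most $N$, immediately yielding $H_{N+1} \leq H_N + N$.

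For the lower bound, the idea is to exploit the separation hypothesis through a sorting argument. Reindexing the elements of $A_{N+1}$ as $b_1 < b_2 < \cdots < b_{N+1}$, the hypothesis forces $b_{i+1} - b_i \geq 1$ for each $i$. Hence the $N$ positive differences $d_i := b_{N+1} - b_i$ (for $1 \leq i \leq N$) satisfy $d_i - d_{i+1} = b_{i+1} - b_i \geq 1$. Combined with the elementary fact that two nonnegative reals whose difference is at least $1$ cannot share the same floor, this produces a strict chain $\lfloor d_1 \rfloor > \lfloor d_2 \rfloor > \cdots > \lfloor d_N \rfloor$ of $N$ pairwise distinct elements of $\lfloor (A_{N+1} - A_{N+1})^+ \rfloor$, so $H_{N+1} \geq N \geq \lceil N/2 \rceil$.

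Finally, the multiplicative recurrence follows by combining the two bounds. Replacing $N$ by $N-1$ in the lower bound yields $H_N \geq \lceil (N-1)/2 \rceil \geq (N-1)/2$, equivalently $N \leq 2 H_N + 1$; plugging this into the upper bound gives $H_{N+1} \leq H_N + N \leq 3 H_N + 1$. For $N \geq 3$, the same lower bound supplies $H_N \geq 1$, which upgrades $3 H_N + 1 \leq 4 H_N$. No individual step is difficult; the only point of care is the floor-distinctness observation, which is immediate since $\lfloor x \rfloor = \lfloor y \rfloor$ would force $|x - y| < 1$, contradicting $x - y \geq 1$.
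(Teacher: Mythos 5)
Your proof is correct, and the upper bound and the derivation of the multiplicative recurrence match the paper exactly. The lower bound argument, however, is genuinely different and in fact stronger. The paper fixes the reference point $a_1$, considers $B = \{\lfloor |a_j - a_1|\rfloor : 2 \le j \le N\}$, and uses a pigeonhole argument (at most two $a_j$ can lie in the same unit interval around $a_1$, one on each side) to conclude $H_N \ge \lceil (N-1)/2 \rceil$, i.e.\ $H_{N+1} \ge \lceil N/2 \rceil$. You instead sort $A_{N+1}$ as $b_1 < \cdots < b_{N+1}$ (licit because the floored positive difference set depends only on the underlying set, not the indexing) and observe that the $N$ distances $d_i = b_{N+1} - b_i$ telescope with consecutive gaps $d_i - d_{i+1} = b_{i+1} - b_i \ge 1$, so their floors are strictly decreasing; this yields the stronger bound $H_{N+1} \ge N$. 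The paper in fact proves $H_{N+1} \ge N$ separately in its Lemma~\ref{$H_N$ bound for increasing sequence} under the additional hypothesis that $(a_n)$ is increasing; your sorting argument shows that hypothesis is unnecessary and the two lemmas could be merged. For the purposes of the stated lemma both lower bounds suffice, and your multiplicative deduction $H_{N+1} \le H_N + N \le 3H_N + 1 \le 4H_N$ for $N \ge 3$ (using $H_N \ge \lceil(N-1)/2\rceil \ge 1$) is the same as the paper's.
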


\begin{proof}
We begin by expressing the positive difference set for $A_{N+1}$ as 
\begin{align}
(A_{N+1} - A_{N+1})^+ = (A_N - A_N)^+ \cup \{ |a_{N+1} - a| : a \in A_N \}.
\end{align}
Taking the floor of both sides (applied element-wise to the set) gives
\begin{align}\label{counting AN first}
\lfloor (A_{N+1} - A_{N+1})^+ \rfloor = \lfloor (A_N - A_N)^+ \rfloor \cup S,
\end{align}
where $S = \{ \lfloor |a_{N+1} - a| \rfloor : a \in A_N \}$. It follows that
\begin{align}
H_{N+1} = \#\lfloor (A_{N+1} - A_{N+1})^+ \rfloor \leq \#\lfloor (A_N - A_N)^+ \rfloor + \#S \leq  H_N + N,
\end{align}
which establishes the upper bound.

For the lower bound, consider the set of floor values of distances from \( a_1 \):
\begin{align}
B = \{ \lfloor |a_j - a_1| \rfloor : 2 \leq j \leq N \}.
\end{align}
Each element in \( B \) is an integer and we claim that for each integer \( n \in \mathbb{N} \), there can be at most two indices \( j \geq 2 \) such that \( \lfloor |a_j - a_1| \rfloor = n \).

Suppose for contradiction that there exist three distinct indices \( j, k, \ell \geq 2 \) with
\begin{align}
    \lfloor |a_j - a_1| \rfloor = \lfloor |a_k - a_1| \rfloor = \lfloor |a_\ell - a_1| \rfloor = n.
\end{align}
This implies \( n \leq |a_j - a_1|, |a_k - a_1|, |a_\ell - a_1| < n+1 \).
By the pigeonhole principle, at least two of these points must be on the same side of \( a_1 \) (that is, both larger or both smaller than \( a_1 \)). Without loss of generality, assume that both \( a_j \) and \( a_k \) are larger than \( a_1 \) (the case where both are smaller is similar). Then \( a_j, a_k \in [a_1 + n, a_1 + n + 1) \).

But then \( |a_j - a_k| < 1 \), contradicting the assumption that \( \displaystyle\inf_{m \neq n} |a_m - a_n| \geq 1 \). Hence, for each \( n \in \mathbb{N} \), there are at most two points \( a_j \) (with \( j \geq 2 \)) such that \( \lfloor |a_j - a_1| \rfloor = n \).

Therefore, the number of distinct values in \( B \) is at least \( \left\lceil \frac{N-1}{2} \right\rceil \). Since \( B \subseteq \lfloor |A_N - A_N| \rfloor \), it follows that the number of distinct distances \( H_N \) satisfies $H_N \geq \left\lceil \frac{N-1}{2} \right\rceil$.
This proves the first part of the lemma. 

For $N\geq 3$, the lower bound gives $H_{N} \geq \frac{N-1}{2} (\geq 1)$. Substituting this into the upper bound gives
    \begin{align}
        H_{N+1} \leq H_{N}+2\left(\frac{N-1}{2}\right)+1\leq H_{N}+2H_{N}+1\leq  4H_{N}.
    \end{align}
    This completes the proof.
\end{proof}

\begin{remark}
Lemma~\ref{H_N bound for general sequence} shows that $H_{N+1} \leq H_N + N$, and by iterating this recurrence, we obtain the trivial bound $H_N \leq \displaystyle\sum_{k=1}^{N-1} k = \binom{N}{2}$.  
\end{remark}
We now establish a variant of Lemma~\ref{H_N bound for general sequence} for strictly increasing sequences, which yields an improved bound for $H_N$. Although this lemma is not necessary for the proof of Theorem~\ref{upper bound}, it will be used in the proof of one of its corollaries.

\begin{lemma}\label{$H_N$ bound for increasing sequence}
Let $(a_n)_{n \geq 1}$ be a strictly increasing sequence of real numbers such that $\displaystyle\inf_{\substack{1 \leq m \neq n \leq N}} |a_m - a_n| \geq 1$ for all $N \in \mathbb{N}$. Then $H_N $ satisfies the recurrence
$N \leq H_{N+1} \leq H_N + N$. Consequently, $H_{N+1} \leq 2H_{N}+1\leq 3 H_{N}$ holds for $N \geq 2$.
\end{lemma}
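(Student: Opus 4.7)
The plan is to mirror the argument of Lemma~\ref{H_N bound for general sequence}, improving only the lower bound by exploiting the strict monotonicity of $(a_n)_{n \geq 1}$.

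For the upper bound $H_{N+1} \leq H_N + N$, I would repeat verbatim the decomposition from the proof of Lemma~\ref{H_N bound for general sequence}: since $\lfloor (A_{N+1} - A_{N+1})^+ \rfloor = \lfloor (A_N - A_N)^+ \rfloor \cup S$, where $S = \{\lfloor |a_{N+1} - a_j| \rfloor : 1 \leq j \leq N\}$ has at most $N$ elements, the bound follows.

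For the improved lower bound $N \leq H_{N+1}$, the key new input is that, because $(a_n)_{n \geq 1}$ is strictly increasing, $a_{N+1} > a_j$ for every $j \leq N$. Hence $T := \{a_{N+1} - a_j : 1 \leq j \leq N\} \subseteq (A_{N+1} - A_{N+1})^+$. I would claim that $\lfloor T \rfloor$ already consists of $N$ distinct integers: if two indices $j \neq k$ produced the same floor $n$, then $a_{N+1} - a_j$ and $a_{N+1} - a_k$ would both lie in $[n, n+1)$, forcing $|a_j - a_k| < 1$ and contradicting the unit-separation hypothesis. Thus $H_{N+1} \geq \#\lfloor T \rfloor = N$.

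The consequence is then routine: applying the lower bound with $N$ replaced by $N-1$ gives $H_N \geq N - 1$ for $N \geq 2$, so $H_{N+1} \leq H_N + N = H_N + (N - 1) + 1 \leq 2 H_N + 1$, and the final bound $2H_N + 1 \leq 3H_N$ holds whenever $H_N \geq 1$, which is guaranteed for $N \geq 2$ by $H_N \geq N - 1 \geq 1$. There is no substantial technical obstacle here; the only conceptual refinement over Lemma~\ref{H_N bound for general sequence} is to use $a_{N+1}$ (the maximum of $A_{N+1}$) rather than $a_1$ as the reference point, so that under strict monotonicity all other elements lie on one side of the reference and the two-sided pigeonhole loss responsible for the factor $1/2$ in the non-monotone case disappears.
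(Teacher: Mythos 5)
Your proof is correct and takes essentially the same approach as the paper: both use $a_{N+1}$ as the reference point, argue via the unit-separation hypothesis that two indices $j \neq k$ cannot produce the same floor $\lfloor a_{N+1} - a_j \rfloor = \lfloor a_{N+1} - a_k \rfloor$, and conclude $H_{N+1} \geq N$ before running the same arithmetic for the consequence. Your closing remark correctly identifies the conceptual gain over Lemma~\ref{H_N bound for general sequence} (one-sided reference eliminates the pigeonhole factor of $1/2$), which the paper also notes in a subsequent remark using $a_1$ as reference and monotonicity of $\lfloor a_j - a_1\rfloor$.
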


\begin{proof}
We express the floor of positive difference set for $A_{N+1}$ as 
\begin{align}\label{counting AN}
\lfloor (A_{N+1} - A_{N+1})^+ \rfloor = \lfloor (A_N - A_N)^+ \rfloor \cup S,
\end{align}
where $S = \{ \lfloor a_{N+1} - a \rfloor : a \in A_N \}$. We now claim that the set $S$ contains $N$ \emph{distinct} elements.

Suppose, for contradiction, that this is not true. Then, there exist indices $1 \leq i \neq j \leq N$ such that
\begin{align}
    \lfloor a_{N+1} - a_i \rfloor = \lfloor a_{N+1} - a_j \rfloor = k \quad \text{(say)}.
\end{align}
By the definition of the floor function, we can write $a_{N+1}-a_{i}=k+t_{i}$ and $a_{N+1}-a_{j}=k+t_{j}$, where $0\leq t_{i}, t_{j} <1$.
Therefore,
\begin{align}
    |a_j - a_i |= |(a_{N+1} - a_i) - (a_{N+1} - a_j)| =| (k + t_i) - (k + t_j) |= |t_i - t_j|\in [0, 1).
\end{align}
This implies $\lfloor |a_i - a_j| \rfloor = 0$ for $i \neq j$, which contradicts the initial assumption that $\inf_{m \neq n} |a_m - a_n| \geq 1$. This contradiction proves that $S$ has $N$ distinct elements. Moreover, since the sequence is strictly increasing, all elements of $S$ are positive, and we have $H_{N+1} \geq \#S = N$.

The upper bound follows directly from Lemma~\ref{H_N bound for general sequence}. This proves the first part of the lemma. 

For $N\geq 2$, the lower bound gives $H_{N} \geq N-1 (\geq 1)$. Substituting this into the upper bound gives
    \begin{align}
        H_{N+1} \leq H_{N}+(N-1)+1\leq 2H_{N}+1\leq  3H_{N}.
    \end{align}
    This completes the proof.
\end{proof}
\begin{remark}
    Alternatively, to show $H_N \geq  N-1$ for an increasing sequence $(a_n)_{n \geq 1}$ with $\displaystyle\inf_{m \neq n} |a_m - a_n| \geq 1$, consider the set
  $\mathcal{A} \coloneqq \{ \lfloor a_j - a_1 \rfloor : 2 \leq j \leq N \}$. 
    Since $a_j - a_1 = (a_j - a_{j-1}) + (a_{j-1} - a_1) \geq 1 + (a_{j-1} - a_1)$, we have $\lfloor a_j - a_1 \rfloor \geq \lfloor a_{j-1} - a_1 \rfloor + 1$.
    Therefore, the values $\lfloor a_j - a_1 \rfloor$ increase strictly with $j$, and therefore are all distinct. Since $\mathcal{A} \subseteq \lfloor (A_N - A_N)^+ \rfloor$, it follows that $H_N \geq  N-1$.
\end{remark}
\textbf{Proof of Theorem~\ref{upper bound}.}    We arrange the elements of $A=\displaystyle\cup_{N\geq 1}\lfloor  (A_N -A_N)^{+}\rfloor$ into a sequence. Thus, let $(Z_n)_{n\geq 1}$ be a sequence of distinct positive integers such that we have
    \begin{align}\label{z_n equation}
        \{Z_n: 1\leq n\leq H_N\}=\lfloor(A_N -A_N)^{+}\rfloor \text{ for all }N.
    \end{align}
    The sequence $(Z_n)_{n \geq 1}$ is not uniquely defined, as $H_{N+1}-H_{N}$ can be strictly larger than $1$. In fact, it can be as large as $N$, but the proof works irrespective of the version of $(Z_n)_{n \geq 1}$ we take as long as \eqref{z_n equation} is satisfied.
To reduce the size of overlaps $S_{m} \cap S_{n}$, we use the same strategy as is used in the coprime setup of the Duffin-Schaeffer conjecture. We replace $S_{m}$ and $S_{n}$ with modified sets that preserve most of the measure of the original sets, but remove those parts that are excessively responsible for the overlaps. To incorporate this idea into our situation, we consider the reduced sets
\begin{align}\label{Sn coprime}
S_{n}^{\text {coprime }}=[0,1] \cap\left(\bigcup_{\substack{0 \leq a \leq Z_n\\ \gcd\left(a, Z_n\right)=1}}\left(\frac{a}{Z_{n}}-\frac{\psi(n)}{Z_{n}}, \frac{a}{Z_{n}}+\frac{\psi(n)}{Z_{n}}\right)\right),
\end{align}
where $\displaystyle\psi(n)= \frac{\log _{2} Z_{n}}{n \log n \log _{2} n}$ for all $n$. This reflects the fact that the overlaps $S_{m} \cap S_{n}$ come mainly from intervals in $S_{m}$ and $S_{n}$, which are centred around points $a / Z_{m}$ and $b / Z_{n}$, respectively, for which $a / Z_{m}=$ $b / Z_{n}$, so that either $\gcd\left(a, Z_{m}\right)>1$ or $\gcd\left(b, Z_{n}\right)>1$. 

\noindent
Let 
\begin{align}\label{Sn new}
S_{n}=[0,1] \cap\left(\bigcup_{\substack{0 \leq a \leq Z_n}}\left(\frac{a}{Z_{n}}-\frac{\psi(n)}{Z_{n}}, \frac{a}{Z_{n}}+\frac{\psi(n)}{Z_{n}}\right)\right).
\end{align}
Then
$$\lambda(S_n)=\frac{2\psi(n)}{Z_n}Z_n= \frac{2\log _{2} Z_{n}}{n \log n \log _{2} n}.$$

It is known that ${\varphi(T) / T} \gg {(\log_2 T)^{-1}}$, that is, there exist a positive constant $C_0$ and an $N_0^{(1)} \in \mathbb{N}$ such that ${\varphi(Z_n) / Z_n} \geq {C_0(\log_2 Z_n)^{-1}}$ for all $n \geq N_0^{(1)}$. Observe that this is a function of $Z_{n}$, which means that it depends on the actual size of the elements of the floored positive difference set $\lfloor (A_{N}-A_{N})^{+}\rfloor$, and not a function of the cardinality of the difference set, that is, $n$.

We first assume that $\psi(n) \geq \frac{1}{2}$ for infinitely many $n$. Since $\| \alpha z_n\| \leq \frac{1}{2}$, we have $\| \alpha z_n\| \leq \psi(n)$ for infinitely many $n$. Otherwise, $\psi(n) \geq \frac{1}{2}$ for only finitely many $n$, that is, there exists $N_0^{(2)}$ such that $\psi(n) < \frac{1}{2}$ for all $n \geq N_0^{(2)}$, which ensures that $2\psi(n) \varphi\left(Z_n\right) / Z_n < 1$.
Let $N_0 \coloneqq \max (N_{0}^{(1)},N_{0}^{(2)})$. Therefore,
\begin{align}
  \sum_{n=N_0}^{\infty}\lambda\left(S_{n}^{\text {coprime }}\right)=& \sum_{n=N_0}^{\infty} \min (2\psi(n) \varphi\left(Z_n\right) / Z_n,1)\\
  = &\sum_{n=N_0}^{\infty}2\psi(n) \varphi\left(Z_n\right) / Z_n\\
  \geq & \sum_{n=N_0}^{\infty}\frac{2C_0\psi(n)}{ \log _{2} Z_n }=\sum_{n=N_0}^{\infty}\frac{2C_0}{n \log n \log _{2} n}=+\infty.
\end{align} 
Let $\psi^{*}: \mathbb{N} \to \mathbb{R}_{\geq 0}$ be defined by
$$\psi^{*}(q)=\begin{cases}
    \psi(n)  &\quad\text{if }q=Z_n \text{ for some }n  \in \mathbb{N}, \\        0 &\quad\text{otherwise.} 
\end{cases}$$
Since $(Z_n)_{n \geq 1}$ is a sequence of distinct positive integers, the map $\psi^{*}$ is well-defined. 
Therefore,
\begin{align}
  \sum_{q=1}^{\infty} \frac{\psi^{*}(q) \phi(q)}{q}=\sum_{n=1}^{\infty}\frac{\psi(n) \phi(Z_n)}{Z_n} = \frac{1}{2}\sum_{n=N_0}^{\infty}\lambda\left(S_{n}^{\text {coprime }}\right)=\infty.
\end{align}

We note that $\psi^{*}: \mathbb{N} \to \mathbb{R}_{\geq 0}$ is a function such that $\displaystyle\sum_{q=1}^{\infty} \frac{\psi^{*}(q)\phi(q)}{q}=\infty$. Therefore, using the Duffin-Schaeffer conjecture (see Theorem~\ref{Duffin-Schaeffer Conjecture}), there exists a set $\mathcal{K'}\subseteq [0,1]
$ with $\lambda(\mathcal{K'})=1$ (without loss of generality, we can assume that $\mathcal{K'}\cap \mathbb{Q}=\phi$) such that for all $\alpha \in \mathcal{K'}$, the inequality
\begin{align}\label{Ds1}
    \left|  \alpha -\frac{a}{q}\right|\leq \frac{\psi^{*}(q)}{q}
\end{align}
has infinitely many coprime solutions $a$ and $q$.

We note that if $q\neq Z_n$ for some $n \in \mathbb{N}$, then $\psi^{*}(q)=0$, and \eqref{Ds1} gives $|\alpha -\frac{a}{q}|=0$, that is, $\alpha =\frac{a}{q}\in \mathbb{Q}$, which is a contradiction.
 In other words, $q=Z_n$ for some $n \in \mathbb{N}$.

Therefore, for all $\alpha \in \mathcal{K}$, where $\mathcal{K}\subseteq [0,1]
$ and $\lambda(\mathcal{K})=1$, the inequality
\begin{align}
    &\left|  \alpha -\frac{a}{Z_n}\right|\leq \frac{\psi^{*}(Z_n)}{Z_n}\\
    \implies &\left|  \alpha -\frac{a}{Z_n}\right|\leq \frac{\psi(n)}{Z_n}\\
    \implies &\left|  \alpha Z_n -a\right|\leq \psi(n)\\
    \implies &\left\|  \alpha Z_n\right\|\leq \psi(n)
    \end{align}
    has infinitely many coprime solutions $a$ and $Z_n$, that is, $\left\|  \alpha Z_n\right\|\leq \frac{\log _{2} Z_{n}}{n \log n \log _{2} n}$ for infinitely many $n$. This shows that there exists a sequence $((t(n))_{n \geq 1}$ of distinct natural numbers such that $\left\|  \alpha Z_{t(n)}\right\|\leq \frac{\log _{2} Z_{t(n)}}{t(n) \log t(n) \log _{2} t(n)}$ for all $n \in \mathbb{N}$.

By the construction of $Z_n$, for every $n$, there is a unique $N$ such that $H_{N}< t(n) \leq H_{N+1}$, and Lemma~\ref{H_N bound for general sequence} shows that $t(n) > H_{N} \geq \frac{1}{4}H_{N+1}\geq \frac{N}{8}$. 

Since $B_{N+1}= \displaystyle \max_{1\leq m\neq n\leq  N+1} \lfloor |a_{m}-a_{n}|\rfloor= \max \lfloor (A_{N+1}-A_{N+1})^{+} \rfloor=\max \{Z_n: 1\leq n\leq H_{N+1}\}$, we have $Z_{t(n)} \leq B_{N+1}$. Therefore, for all $n \in \mathbb{N}$ with $t(n) \in (H_{N},H_{N+1}]$, we have
\begin{align}
 \left\|  \alpha Z_{t(n)}\right\|\leq \frac{\log _{2} Z_{t(n)}}{t(n) \log t(n) \log _{2} t(n)} \leq \frac{\log _{2} B_{N+1}}{H_{N} \log (\frac{N}{8}) \log _{2} (\frac{N}{8})}.
\end{align} 
In particular, there are infinitely many $N$ such that
\begin{align}
   \lfloor \delta^{\alpha}_{\min }\rfloor(N+1)=&\displaystyle \min_{1\leq m\neq n\leq  N+1}\| \alpha\lfloor |a_{m}-a_{n}| \rfloor\| 
   = \displaystyle \min_{l\leq  H_{N+1}}\|\alpha Z_{l}\|
   \leq \|\alpha Z_{t(n)}\| \leq \frac{\log _{2} B_{N+1}}{H_{N} \log (\frac{N}{8}) \log _{2} (\frac{N}{8})}.
\end{align}
Using Lemma~\ref{H_N bound for general sequence}, we see that there are infinitely many $N$ such that
\begin{align}\label{mimick}
   \lfloor \delta^{\alpha}_{\min }\rfloor(N)\leq \frac{\log _{2} B_{N}}{H_{N-1} \log (  \frac{N-1}{8}) \log _{2}(  \frac{N-1}{8})}\leq \frac{4\log _{2} B_{N}}{H_{N} \log (\frac{N}{9}) \log _{2} (\frac{N}{9})}.
\end{align}
We now prove that $B_N \leq \lfloor \max(a_1,\ldots,a_N) \rfloor$.

For any $1 \leq m \neq n \leq N$, we have
$|a_m - a_n| \leq \max(a_1,\ldots,a_N) - \min(a_1,\ldots,a_N)$.
Taking floors, we get
\begin{align}
\lfloor |a_m - a_n| \rfloor \leq \lfloor \max(a_1,\ldots,a_N) - \min(a_1,\ldots,a_N) \rfloor.
\end{align}
Since we have all $a_i > 0$, we have $\min(a_1,\ldots,a_N) > 0$, so $\lfloor |a_m - a_n| \rfloor \leq \lfloor \max(a_1,\ldots,a_N) \rfloor$, and we obtain the upper bound claimed for $B_N$.

\begin{cor}\label{upper bound increasing sequence}
    Let $(a_n)_{n \geq 1}$ be an increasing sequence of distinct positive real numbers such that
$\displaystyle\inf_{\substack{1\leq  m \neq n \leq N}} |a_m - a_n| \geq 1$ for all $N \in \mathbb{N}$. Then, for almost all $\alpha \in [0,1]$, we have 
    \begin{align}
            \lfloor \delta^{\alpha}_{\min }\rfloor(N)  \leq  \frac{3\log _{2} B_{N}}{H_{N} \log (  \frac{N}{4}) \log _{2} (  \frac{N}{4})} \text{ for infinitely many $N$,}
    \end{align}
    where $ \lfloor \delta^{\alpha}_{\min }\rfloor(N) =\displaystyle \min_{1\leq m\neq n\leq  N}\| \alpha\lfloor |a_{m}-a_{n}| \rfloor\| $,  $B_N= \displaystyle \max_{1\leq m\neq n\leq  N} \lfloor |a_{m}-a_{n}|\rfloor$, and $H_N =  \#\lfloor(A_N -A_N)^{+}\rfloor$. 
    Since $B_N \leq \lfloor a_N\rfloor$, the above statement also holds when $B_N$ is replaced by $\lfloor a_N\rfloor$.
\end{cor}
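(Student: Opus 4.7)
The plan is to essentially mimic the proof of Theorem~\ref{upper bound} verbatim, but replace every invocation of Lemma~\ref{H_N bound for general sequence} with the sharper Lemma~\ref{$H_N$ bound for increasing sequence}, which is available precisely because we now assume the sequence is increasing. Since the upper bound is strictly stronger than the one in Theorem~\ref{upper bound} (a factor of 3 instead of 4, and $N/4$ instead of $N/9$ inside the logarithms), I would not try to prove it from scratch; instead, I would point to the exact step in the proof of Theorem~\ref{upper bound} where the quantitative information from Lemma~\ref{H_N bound for general sequence} enters, and substitute the stronger estimates.

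Concretely, I would proceed as follows. Enumerate $\bigcup_{N\ge 1}\lfloor (A_N-A_N)^+\rfloor$ as $(Z_n)_{n\ge 1}$, set $\psi(n)=\frac{\log_2 Z_n}{n\log n\log_2 n}$, build the sets $S_n^{\mathrm{coprime}}$ as in~\eqref{Sn coprime}, verify that $\sum \lambda(S_n^{\mathrm{coprime}})=\infty$, and apply the Duffin--Schaeffer theorem (Theorem~\ref{Duffin-Schaeffer Conjecture}) to produce, for almost every $\alpha$, a subsequence $(t(n))$ of indices satisfying
\begin{align}
\|\alpha Z_{t(n)}\|\le \frac{\log_2 Z_{t(n)}}{t(n)\log t(n)\log_2 t(n)}.
\end{align}
This part is identical to the proof of Theorem~\ref{upper bound} and requires no change.

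The key step where the increasing hypothesis pays off is the estimation of $t(n)$ from below. For each $n$ there is a unique $N$ with $H_N<t(n)\le H_{N+1}$, and I would now apply Lemma~\ref{$H_N$ bound for increasing sequence} (in place of Lemma~\ref{H_N bound for general sequence}) to obtain $t(n)>H_N\ge \tfrac{1}{3}H_{N+1}\ge \tfrac{N}{3}$, since Lemma~\ref{$H_N$ bound for increasing sequence} gives both $H_{N+1}\ge N$ and $H_{N+1}\le 3H_N$ (for $N\ge 2$). Combining this with $Z_{t(n)}\le B_{N+1}$, I would conclude that for infinitely many $N$,
\begin{align}
\lfloor \delta^{\alpha}_{\min}\rfloor(N+1)\le \|\alpha Z_{t(n)}\|\le \frac{\log_2 B_{N+1}}{H_N\log(N/3)\log_2(N/3)}.
\end{align}

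To obtain the statement indexed by $N$ rather than $N+1$, I would reindex via Lemma~\ref{$H_N$ bound for increasing sequence} once more, using $H_{N-1}\ge H_N/3$, so that the previous display becomes
\begin{align}
\lfloor \delta^{\alpha}_{\min}\rfloor(N)\le \frac{\log_2 B_N}{H_{N-1}\log((N-1)/3)\log_2((N-1)/3)}\le \frac{3\log_2 B_N}{H_N\log(N/4)\log_2(N/4)}
\end{align}
for all sufficiently large $N$, where I used the elementary inequality $(N-1)/3\ge N/4$ valid for $N\ge 4$. Finally, for the alternative form involving $\lfloor a_N\rfloor$, I would note that since $(a_n)$ is increasing and positive, every positive difference $|a_m-a_n|$ with $1\le m\neq n\le N$ is bounded by $a_N$, so $\lfloor |a_m-a_n|\rfloor\le \lfloor a_N\rfloor$ and thus $B_N\le \lfloor a_N\rfloor$. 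I do not expect any genuine obstacle here: the only nontrivial input is the improved recurrence for $H_N$ in the increasing case, which is already in hand, and the rest is a careful bookkeeping exercise that parallels the last few lines of the proof of Theorem~\ref{upper bound}.
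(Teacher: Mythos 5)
Your proposal is correct and follows essentially the same route as the paper: the paper's proof of this corollary consists of exactly the observation that Lemma~\ref{$H_N$ bound for increasing sequence} gives $H_N \geq \tfrac{1}{3}H_{N+1} \geq \tfrac{N}{3}$, after which it invokes the same chain of inequalities as in \eqref{mimick} with $8$ replaced by $3$ and the final reindexing using $(N-1)/3 \geq N/4$. Your more detailed write-out of the Duffin--Schaeffer step, the bound $Z_{t(n)} \leq B_{N+1}$, the reindexing via $H_{N-1} \geq H_N/3$, and the justification that $B_N \leq \lfloor a_N \rfloor$ for an increasing positive sequence all match what the paper does implicitly by referring back to the proof of Theorem~\ref{upper bound}.
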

    \begin{proof}
 Lemma~\ref{$H_N$ bound for increasing sequence} shows that $ H_{N} \geq \frac{1}{3}H_{N+1}\geq \frac{N}{3}$. Then, by a calculation similar to \eqref{mimick}, we obtain
\begin{align}
   \lfloor \delta^{\alpha}_{\min }\rfloor(N)\leq \frac{\log _{2} B_{N}}{H_{N-1} \log (  \frac{N-1}{3}) \log _{2}(  \frac{N-1}{3})}\leq \frac{3\log _{2} B_{N}}{H_{N} \log (\frac{N}{4}) \log _{2} (\frac{N}{4})}
\end{align}
for almost all $\alpha \in [0,1]$ and for infinitely many $N$.
\end{proof}

\begin{cor}\label{upper bound of AMB}
    Let $(a_n)_{n \geq 1}$ be a sequence of distinct positive integers. Then, for almost all $\alpha \in [0,1]$, we have 
    \begin{align}
    \delta^{\alpha}_{\min }(N)  \leq  \frac{4\log _{2}  \overline{B}_N}{D_{N} \log (\frac{N}{9}) \log _{2} (\frac{N}{9})} \text{ for infinitely many $N$,}
    \end{align}
    where $\overline{B}_N:=  \max(a_1,\ldots, a_N)$, and $D_N=  \#(A_N -A_N)^{+}$.
\end{cor}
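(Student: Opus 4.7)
The plan is to apply Theorem~\ref{upper bound} directly to the integer sequence $(a_n)_{n \geq 1}$ and then simplify each of the quantities $\lfloor \delta^{\alpha}_{\min}\rfloor(N)$, $H_N$, and $B_N$ using the fact that the sequence consists of integers.

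First I would verify that the hypothesis of Theorem~\ref{upper bound} is automatically satisfied: since $(a_n)_{n \geq 1}$ is a sequence of distinct positive integers, $|a_m - a_n| \in \mathbb{N}$ for $m \neq n$, hence $\inf_{1 \leq m \neq n \leq N} |a_m - a_n| \geq 1$ for all $N$. Next I would observe the three identifications noted in the remarks following Theorem~\ref{upper bound}: (i) $\lfloor |a_m - a_n|\rfloor = |a_m - a_n|$ since $|a_m - a_n|$ is already an integer, which yields $\lfloor \delta^{\alpha}_{\min}\rfloor(N) = \delta^{\alpha}_{\min}(N)$; (ii) $\lfloor (A_N - A_N)^+\rfloor = (A_N - A_N)^+$, which gives $H_N = D_N$; and (iii) by the remark following Theorem~\ref{upper bound}, we may replace $B_N$ by $\lfloor \max(a_1,\ldots,a_N)\rfloor$, and since the $a_n$ are integers this equals $\overline{B}_N$.

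Putting these substitutions into the conclusion of Theorem~\ref{upper bound} yields, for almost all $\alpha \in [0,1]$,
\begin{align}
\delta^{\alpha}_{\min}(N) = \lfloor \delta^{\alpha}_{\min}\rfloor(N) \leq \frac{4\log_2 \overline{B}_N}{D_N \log(\tfrac{N}{9}) \log_2(\tfrac{N}{9})}
\end{align}
for infinitely many $N$, which is precisely the statement of the corollary.

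There is no real obstacle here: the corollary is a direct specialisation of Theorem~\ref{upper bound} to the integer case, and the entire argument amounts to checking the hypothesis and recognising that each of the quantities $\lfloor \delta^{\alpha}_{\min}\rfloor(N)$, $H_N$, and $B_N$ collapses to its unfloored counterpart when the underlying sequence is integer-valued. The only point worth stating explicitly in the write-up is the inequality $B_N \leq \overline{B}_N$, which follows from $|a_m - a_n| \leq \max(a_1,\ldots,a_N)$ for distinct positive integers $a_m, a_n$ (since $\min(a_1,\ldots,a_N) \geq 1 > 0$).
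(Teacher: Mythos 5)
Your proof is correct and takes essentially the same approach as the paper: both derive the corollary as a direct specialisation of Theorem~\ref{upper bound} by verifying the hypothesis $\inf_{m\neq n}|a_m-a_n|\geq 1$ and noting that for integer sequences $\lfloor\delta^{\alpha}_{\min}\rfloor(N)=\delta^{\alpha}_{\min}(N)$, $H_N=D_N$, and $B_N\leq\overline{B}_N$. Your write-up is merely a more explicit version of the paper's brief argument.
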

\begin{proof}
    Since for a sequence $(a_n)_{n \geq 1}$ of distinct positive integers we have $\displaystyle\inf_{1\leq m \neq n\leq N}|a_{m}-a_{n}|\geq 1$, and $ \lfloor \delta^{\alpha}_{\min }\rfloor(N)=\delta_{\min }^{\alpha}(N)$, we recover \cite[Theorem 1 (Equation 5)]{ABM} from Theorem~\ref{upper bound}.
\end{proof}

\begin{remark}
    We observe that although in \cite[Theorem 1 (Equation 5)]{ABM}, the result is stated as 
    \begin{align}
            \delta^{\alpha}_{\min }(N)  \leq  \frac{\log _{2}  a_N}{D_{N} \log  N \log _{2}  N} \text{ for infinitely many }N,
    \end{align}
   we believe that for any sequence of distinct positive integers $(a_n)_{n \geq 1}$, the correct bound should be
      \begin{align}
            \delta^{\alpha}_{\min }(N)  \leq  \frac{4\log _{2}  \max(a_1,\ldots, a_N)}{D_{N} \log (\frac{N}{9}) \log _{2} (\frac{N}{9})} \text{ for infinitely many }N.
    \end{align}
 Similarly, for any increasing sequence of distinct positive integers $(a_n)_{n \geq 1}$, the correct bound should be
    \begin{align}
     \delta^{\alpha}_{\min }(N)  \leq  \frac{3\log _{2}  a_N}{D_{N} \log (  \frac{N}{4}) \log _{2} (  \frac{N}{4})} \text{ for infinitely many }N.
    \end{align}   
These are derived from Corollaries~\ref{upper bound of AMB} and~\ref{upper bound increasing sequence}, respectively.
\end{remark}
\begin{remark}\label{H_N=T_N}
      Let $T_N = \#(\lfloor A_N - A_N \rfloor)^+$ and $H_N =  \#\lfloor(A_N -A_N)^{+}\rfloor$ be defined as in Section~\ref{Notation}. Since $|a_m - a_n| \geq 1$ for all positive integers $m, n$ with $m \neq n$, both sets $(\lfloor A_N - A_N \rfloor)^+$ and $\lfloor (A_N - A_N)^+ \rfloor$ are equal to $\{ \lfloor a_m - a_n \rfloor : a_m > a_n \}$, and we obtain $T_N = H_N$. 
\end{remark}
\begin{prop}\label{twisted upper bound}
    Let $(a_n)_{n \geq 1}$ be a sequence of distinct positive real numbers such that for all $N \in \mathbb{N}$. Then, for almost all $\alpha \in [0,1]$, we have 
    \begin{align}
         \lfloor \widetilde{\delta}_{\min}^{\alpha} \rfloor(N)            
        \leq   \frac{4\log _{2} \widetilde{B}_{N}}{H_{N} \log (\frac{N}{9}) \log _{2} (\frac{N}{9})} \text{ for infinitely many }N,
    \end{align}
    where $\lfloor \widetilde{\delta}_{\min}^{\alpha} \rfloor(N)=\displaystyle\min_{1 \leq m \neq n \leq N} \left\| \alpha \left\lfloor a_m - a_n \right\rfloor \right\|$, $\widetilde{B}_{N} = \displaystyle \max_{1\leq m\neq n\leq  N} |\left\lfloor a_{m}-a_{n}\right\rfloor|$, and $H_N =  \#\lfloor(A_N -A_N)^{+}\rfloor$.
    Since $\widetilde{B}_N \leq \max(\lfloor  a_1 \rfloor,\ldots, \lfloor  a_N \rfloor )+1=\lfloor \max(a_1,\ldots, a_N)\rfloor+1$, the above statement also holds when $\widetilde{B}_N$ is replaced by $\lfloor \max(a_1,\ldots, a_N)\rfloor+1$.
\end{prop}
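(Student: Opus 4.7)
The plan is to reduce the proposition immediately to Theorem~\ref{upper bound} by establishing two elementary pointwise comparisons: one between the two minimal-gap statistics $\lfloor \widetilde{\delta}_{\min}^{\alpha}\rfloor(N)$ and $\lfloor \delta_{\min}^{\alpha}\rfloor(N)$, and one between the size parameters $\widetilde{B}_N$ and $B_N$. Under the implicit separation hypothesis $\inf_{1 \leq m \neq n \leq N}|a_m - a_n| \geq 1$ (as in Theorem~\ref{upper bound}), the desired bound then drops out immediately; otherwise, any pair with $|a_{m_0} - a_{n_0}| < 1$ forces one of $\lfloor a_{m_0} - a_{n_0}\rfloor$, $\lfloor a_{n_0} - a_{m_0}\rfloor$ to equal $0$, so $\lfloor \widetilde{\delta}_{\min}^{\alpha}\rfloor(N) = 0$ for all sufficiently large $N$ and every $\alpha$, rendering the bound trivial.

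For the first comparison, whenever $a_m > a_n$ one has $\lfloor|a_m - a_n|\rfloor = \lfloor a_m - a_n\rfloor$, so the term $\|\alpha \lfloor a_m - a_n\rfloor\|$ appearing in the minimum defining $\lfloor \widetilde{\delta}_{\min}^{\alpha}\rfloor(N)$ coincides with the term $\|\alpha \lfloor|a_m - a_n|\rfloor\|$ appearing in the minimum for $\lfloor \delta_{\min}^{\alpha}\rfloor(N)$. Hence every term in the latter set is also present in the former, so
\begin{align}
\lfloor \widetilde{\delta}_{\min}^{\alpha}\rfloor(N) \leq \lfloor \delta_{\min}^{\alpha}\rfloor(N) \qquad \text{for all } N \in \mathbb{N}, \; \alpha \in [0,1].
\end{align}
The same observation yields $\widetilde{B}_N \geq B_N$: the ordered pair $(m^*, n^*)$ with $a_{m^*} > a_{n^*}$ realising $B_N$ satisfies $|\lfloor a_{m^*} - a_{n^*}\rfloor| = \lfloor a_{m^*} - a_{n^*}\rfloor = B_N \leq \widetilde{B}_N$, whence $\log_2 \widetilde{B}_N \geq \log_2 B_N$.

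Combining these two pointwise inequalities with Theorem~\ref{upper bound}, for almost every $\alpha \in [0,1]$ we obtain
\begin{align}
\lfloor \widetilde{\delta}_{\min}^{\alpha}\rfloor(N) \leq \lfloor \delta_{\min}^{\alpha}\rfloor(N) \leq \frac{4\log_2 B_N}{H_N \log(N/9)\log_2(N/9)} \leq \frac{4\log_2 \widetilde{B}_N}{H_N \log(N/9)\log_2(N/9)}
\end{align}
for infinitely many $N$, which is the asserted bound. The auxiliary estimate $\widetilde{B}_N \leq \lfloor\max(a_1,\ldots,a_N)\rfloor + 1$ follows from the remaining case $a_m < a_n$, which gives $|\lfloor a_m - a_n\rfloor| = \lceil a_n - a_m\rceil \leq \lceil a_n\rceil \leq \lfloor a_n\rfloor + 1 \leq \lfloor\max(a_1,\ldots,a_N)\rfloor + 1$, using positivity of the $a_i$.

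Since the whole argument is a short reduction, there is no substantive technical obstacle; the only step worth highlighting is the recognition that the minimisation set defining $\lfloor \widetilde{\delta}_{\min}^{\alpha}\rfloor(N)$ contains the one defining $\lfloor \delta_{\min}^{\alpha}\rfloor(N)$, after which Theorem~\ref{upper bound} does all the real work.
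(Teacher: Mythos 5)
Your proposal is correct and arrives at the same conclusion, but via a cleaner reduction than the paper's own argument. The paper uses Remark~\ref{H_N=T_N} (that, under the spacing hypothesis, $(\lfloor A_N - A_N\rfloor)^+ = \lfloor(A_N - A_N)^+\rfloor$) to introduce an enumeration $(Y_n)$ of $(\lfloor A_N - A_N\rfloor)^+$ and then ``mimicks'' the proof of Theorem~\ref{upper bound} with $(Y_n, H_N)$ in place of $(Z_n, H_N)$, writing the chain $\min_{m\neq n}\|\alpha\lfloor a_m - a_n\rfloor\| = \min_{t\leq H_N}\|\alpha Y_t\| \leq \cdots$. You instead treat Theorem~\ref{upper bound} as a black box and establish the two elementary pointwise facts $\lfloor\widetilde{\delta}_{\min}^{\alpha}\rfloor(N) \leq \lfloor\delta_{\min}^{\alpha}\rfloor(N)$ (because the set of floored positive differences is a subset of $\{|\lfloor a_m - a_n\rfloor| : m\neq n\}$) and $B_N \leq \widetilde{B}_N$, then chain the inequalities. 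Both hinge on the same observation that $\lfloor a_m - a_n\rfloor = \lfloor|a_m - a_n|\rfloor$ when $a_m > a_n$, but your presentation is more transparent, and it actually shows the stronger bound with $B_N$ (the paper's stated bound with $\widetilde{B}_N$ is the weaker one). Incidentally, the paper's intermediate equality $\min_{m\neq n}\|\alpha|\lfloor a_m - a_n\rfloor|\| = \min_{t\leq H_N}\|\alpha Y_t\|$ should really be a ``$\leq$'', since the left-hand minimum ranges over a possibly larger set (it also includes the ceilings $\lceil a_n - a_m\rceil$ for $a_n > a_m$); your formulation sidesteps this sloppiness. You were also right to flag that the statement as printed has a dropped hypothesis — ``such that for all $N\in\mathbb{N}$'' is missing the condition $\inf_{1\leq m\neq n\leq N}|a_m - a_n|\geq 1$ — and your remark that the bound is vacuously true when this fails (since then $\lfloor\widetilde{\delta}_{\min}^{\alpha}\rfloor(N) = 0$ eventually) is a useful sanity check, even though the intended reading is that the hypothesis is present.
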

\begin{proof}
  Remark~\ref{H_N=T_N} ensures that $ \#(\lfloor A_N - A_N \rfloor)^+ =\#\lfloor(A_N -A_N)^{+}\rfloor(=H_N) $. Let $(Y_n)_{n \geq 1}$ be a sequence of distinct positive integers such that
    \begin{align}\label{Y_n equation}
        \{Y_n : 1 \leq n \leq H_N\} = (\lfloor A_N - A_N \rfloor)^+ \quad \text{for all } N.
    \end{align}
    Mimicking the proof of Theorem~\ref{upper bound} with a similar argument by replacing $(Z_n, H_N)$ with $(Y_n, H_N)$, we obtain
    \begin{align}
    \min_{1 \leq m \neq n \leq N} \left\| \alpha \left\lfloor a_m - a_n \right\rfloor \right\| =\displaystyle \min_{1\leq m\neq n\leq  N} \left \| \alpha |\left \lfloor a_{m}-a_{n} \right\rfloor| \right\|         = \min_{ t \leq H_N} \| \alpha Y_t \|  \leq   \frac{4\log _{2} \widetilde{B}_{N}}{H_{N} \log (\frac{N}{9}) \log _{2} (\frac{N}{9})}
\end{align} 
for infinitely many $N$.
\end{proof}
    
\begin{remark}
For any $x\in  \mathbb{R},$
\begin{align}
\lfloor -x \rfloor = 
\begin{cases} 
-\lfloor x \rfloor & \text{if } x \in \mathbb{Z}, \\
-\lfloor x \rfloor - 1 & \text{if } x \notin \mathbb{Z}.
\end{cases}
\end{align}
If $a_m-a_n \notin \mathbb{Z}$ for all $m \neq n$, we have
\begin{align}
      \min_{1\leq m\neq n\leq  N} \left \| \alpha \left (\lfloor a_{m}-a_{n} \right\rfloor +1)\right\|  
     = \min_{1\leq m\neq n\leq  N} \left \| \alpha \left (\lfloor a_{n}-a_{m} \right\rfloor +1)\right\|  
    = & \min_{1\leq m\neq n\leq  N} \left \| \alpha \left (-\lfloor a_{m}-a_{n} \right\rfloor)\right\|  \\
    =&\displaystyle \min_{1\leq m\neq n\leq  N} \left \| \alpha \left \lfloor a_{m}-a_{n} \right\rfloor \right\|    \\
    \leq &       \frac{4\log _{2} \widetilde{B}_{N}}{H_{N} \log (\frac{N}{9}) \log _{2} (\frac{N}{9})}
\end{align} 
for infinitely many $ N$.
\end{remark}
\begin{prop}
Let $(a_n)_{n \geq 1}$ be a sequence of distinct positive real numbers. For any integer $N \geq 2$, define
\begin{align}
 B_N = \max_{1 \leq m \neq n \leq N} \lfloor |a_m - a_n| \rfloor
\quad \text{and} \quad
    \widetilde{B}_N = \max_{1 \leq m \neq n \leq N} |\lfloor a_m - a_n \rfloor|.
\end{align}
Then $\widetilde{B}_N \geq B_N$ always holds.

Furthermore, the equality $\widetilde{B}_N = B_N$ is valid if and only if the maximum value $\widetilde{B}_N$ is achieved by a pair of indices $(m, n)$ for which the difference $|a_m - a_n|$ is an integer. In this case, $|\lfloor a_m - a_n \rfloor| = \lfloor |a_m - a_n| \rfloor$.
\end{prop}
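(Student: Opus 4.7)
The plan is to reduce the proposition to a pointwise inequality about the floor function and then exploit the symmetry of the maximisation over ordered pairs.

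First, I would establish the pointwise lemma: for every $x \in \R$,
\[
|\lfloor x \rfloor| \;\geq\; \lfloor |x| \rfloor,
\]
with equality if and only if $x \geq 0$ or $x \in \Z$. The case $x \geq 0$ is immediate, and for $x < 0$ the identity already recalled in the paper (namely $\lfloor -y \rfloor = -\lfloor y \rfloor$ for $y \in \Z$ and $\lfloor -y \rfloor = -\lfloor y \rfloor - 1$ for $y \notin \Z$) gives $|\lfloor x \rfloor| = \lfloor |x| \rfloor + \mathds{1}(x \notin \Z)$. Applying this with $x = a_m - a_n$ and taking the maximum over all ordered pairs with $m \neq n$ immediately produces $\widetilde{B}_N \geq B_N$.

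Second, for the equality criterion, the crucial observation is that both $(m,n)$ and $(n,m)$ appear in the maximum defining $\widetilde{B}_N$. For each unordered pair $\{m,n\}$ with $a_m \neq a_n$, exactly one of $|\lfloor a_m - a_n \rfloor|$ and $|\lfloor a_n - a_m \rfloor|$ equals $\lfloor |a_m - a_n| \rfloor$, while the other equals $\lceil |a_m - a_n| \rceil$ (by the identity above). Hence
\[
\widetilde{B}_N \;=\; \max_{1 \leq m \neq n \leq N} \lceil |a_m - a_n| \rceil \;=\; \lceil D_N^{*} \rceil, \qquad B_N \;=\; \lfloor D_N^{*} \rfloor,
\]
where $D_N^{*} = \max_{m \neq n} |a_m - a_n|$.

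Finally, $\widetilde{B}_N = B_N$ is equivalent to $\lceil D_N^{*} \rceil = \lfloor D_N^{*} \rfloor$, i.e.\ to $D_N^{*} \in \Z$. The stated pair-level condition then follows readily: if $D_N^{*} \in \Z$, any pair $(m_0, n_0)$ attaining $D_N^{*}$ has integer difference and realises $\widetilde{B}_N$; conversely, if some pair $(m^*, n^*)$ attains $\widetilde{B}_N$ with $|a_{m^*} - a_{n^*}| \in \Z$, then the equality case of the pointwise lemma gives $|\lfloor a_{m^*} - a_{n^*} \rfloor| = \lfloor |a_{m^*} - a_{n^*}| \rfloor \leq B_N$, forcing equality with $\widetilde{B}_N$. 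The only delicate step is the ordered-versus-unordered pair bookkeeping in the second paragraph, making sure the $\lceil \cdot \rceil$--$\lfloor \cdot \rfloor$ dichotomy is applied consistently; beyond that, the argument is entirely elementary.
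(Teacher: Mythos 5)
Your proof is correct, and for the core inequality $\widetilde{B}_N \geq B_N$ it proceeds in essentially the same way as the paper: establish the pointwise inequality $|\lfloor x \rfloor| \geq \lfloor |x| \rfloor$ (the paper splits into $a_m \geq a_n$ and $a_m < a_n$, you invoke the floor identity directly; these are the same computation) and then take the maximum. Where you genuinely diverge is the equality characterisation. The paper's proof essentially stops after the pointwise inequality, leaving the ``if and only if'' clause unargued, whereas you extract the clean identities $\widetilde{B}_N = \lceil D_N^{*} \rceil$ and $B_N = \lfloor D_N^{*} \rfloor$ with $D_N^{*} = \max_{m \neq n} |a_m - a_n|$, which reduce the equality case to the transparent condition $D_N^{*} \in \Z$ and then show this is equivalent to the pair-level statement in the proposition. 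This reformulation is a genuine improvement: it makes the equality criterion one about a single scalar rather than a quantified statement over pairs, and it makes the proof of the ``if and only if'' immediate. One small imprecision: your phrase ``exactly one of $|\lfloor a_m - a_n \rfloor|$ and $|\lfloor a_n - a_m \rfloor|$ equals $\lfloor |a_m - a_n| \rfloor$, while the other equals $\lceil |a_m - a_n| \rceil$'' should allow for the degenerate case $a_m - a_n \in \Z$, where both quantities coincide; this does not affect the conclusion (the max over the ordered pair is still $\lceil |a_m - a_n| \rceil$), but the phrasing should be softened to ``at least one equals $\lceil \cdot \rceil$ and at least one equals $\lfloor \cdot \rfloor$.''
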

\begin{proof}
Let  $m, n$ be distinct indices.

\begin{itemize}
    \item If $a_m \geq a_n$, then
    \begin{align}
            |\lfloor a_m - a_n \rfloor| = \lfloor a_m - a_n \rfloor = \lfloor |a_m - a_n| \rfloor.
    \end{align}
    \item If $a_m < a_n$, then using the identities $\lfloor -x \rfloor =-\lceil x  \rceil$ and $\lceil x \rceil \geq \lfloor x \rfloor$ for real $x$, we obtain
    \begin{align}
          |\lfloor a_m - a_n \rfloor| = | \lfloor -(a_n - a_m) \rfloor | = | -\lceil a_n - a_m \rceil | = \lceil a_n - a_m \rceil\geq \lfloor a_n - a_m \rfloor = \lfloor |a_m - a_n| \rfloor,
    \end{align}
   where the equality holds if and only if $a_n - a_m$ is an integer. Otherwise, it is a strict inequality.
\end{itemize}
Therefore, for each pair of distinct indices $m, n$, we have
$|\lfloor a_m - a_n \rfloor| \geq \lfloor |a_m - a_n| \rfloor$,
and the inequality is strict if $a_n - a_m$ is not an integer.

Taking the maximum over all pairs gives
\begin{align}
\widetilde{B}_{N} = \max_{1 \leq m \neq n \leq N} |\lfloor a_m - a_n \rfloor| \geq \max_{1 \leq m \neq n \leq N} \lfloor |a_m - a_n| \rfloor = B_N.
\end{align}
\end{proof}
\subsection{Proof of Theorem~\ref{upper bound independent of a_N}}
Theorem~\ref{upper bound} depends on the magnitude of $a_N$. To establish Theorem~\ref{upper bound independent of a_N} (that is, to obtain a bound independent of $a_N$), we modify the coprime approach of \eqref{Sn coprime} by removing only those subintervals centred at $a/Z_n$ such that $a$ and $Z_n$ share a small prime factor. This ensures that the removed measure depends solely on $N$, not $a_N$, while sufficiently reducing the problematic overlaps responsible for the growth of the GCD sum.

\subsubsection{Overlap estimates and auxiliary lemmas}\label{Overlap estimates and auxiliary lemmas}
Following these heuristic arguments, we now proceed to the formal proof. Let the sequence $\left(Z_{n}\right)_{n\geq1}$ be defined as in the previous section. For the remainder of this section, we restrict our indices $m$ and $n$ in a range $(2^{k / 2}, 2^{k}]$ for some positive integer $k$ (where, for simplicity of writing, we assume that $k$ is even). We set
$$
\psi(n)=\frac{1}{4 n}, \quad n \geq 1,
$$
and
$$
S_{n}^{*}=[0,1] \cap\left(\bigcup_{\substack{0 \leq a \leq Z_{n} \\ p \mid\left(a, Z_{n}\right) \Rightarrow p>4^{k}}}\left(\frac{a}{Z_{n}}-\frac{\psi(n)}{Z_{n}}, \frac{a}{Z_{n}}+\frac{\psi(n)}{Z_{n}}\right)\right).
$$
To establish Theorem~\ref{upper bound independent of a_N}, it suffices to prove that almost every $\alpha$ belongs to infinitely many $S_n^*$. We now state the necessary lemmas, providing proofs for those for which no reference is given.

\begin{lemma}\label{lemma 2}
For all $n \in(2^{k / 2}, 2^{k}],$
$$
\lambda(S_{n}^{*}) \geq \frac{1}{n} \frac{1}{e^{\gamma} \log \left(4^{k}\right)}(1+o(1))
$$
as $k \rightarrow \infty$, where $\gamma$ denotes the Euler-Mascheroni constant.
\end{lemma}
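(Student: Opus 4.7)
The plan is to compute $\lambda(S_n^*)$ directly via a disjointness argument combined with a Jordan-totient-style counting formula, and then invoke Mertens' third theorem to extract the asymptotic.

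First, I would observe that the condition $p \mid (a,Z_n) \Rightarrow p > 4^k$ is equivalent to $\gcd(a, Q_k) = 1$, where
\[
Q_k \;=\; \prod_{\substack{p \leq 4^k \\ p \,\mid\, Z_n}} p
\]
is the squarefree radical of $Z_n$ restricted to primes at most $4^k$. Since $Q_k \mid Z_n$, a standard CRT/inclusion-exclusion count gives exactly
\[
\#\bigl\{a \in \{1, \ldots, Z_n\} : \gcd(a, Q_k) = 1\bigr\} \;=\; Z_n \cdot \frac{\varphi(Q_k)}{Q_k} \;=\; Z_n \prod_{\substack{p \,\mid\, Z_n \\ p \leq 4^k}}\!\left(1 - \frac{1}{p}\right).
\]

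Next, I would check pairwise disjointness of the intervals entering the union. Two centers $a/Z_n$ and $a'/Z_n$ differ by at least $1/Z_n$, so the intervals of radius $\psi(n)/Z_n$ are disjoint whenever $\psi(n) \leq 1/2$; with $\psi(n) = 1/(4n)$ and $n \geq 1$ this is automatic. Hence, after noting that any clipping at the endpoints $0, 1$ only decreases the count of full intervals by an $O(1)$ boundary correction (which is negligible for large $n$), the Lebesgue measure satisfies
\[
\lambda(S_n^*) \;\geq\; \bigl(\text{count of valid } a\bigr) \cdot \frac{2\psi(n)}{Z_n} \;=\; 2\psi(n) \prod_{\substack{p \,\mid\, Z_n \\ p \leq 4^k}}\!\left(1 - \frac{1}{p}\right).
\]

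Finally, since each factor $(1 - 1/p) \in (0, 1)$, restricting the product to only those primes that actually divide $Z_n$ removes factors less than one and hence increases the product:
\[
\prod_{\substack{p \,\mid\, Z_n \\ p \leq 4^k}}\!\left(1 - \frac{1}{p}\right) \;\geq\; \prod_{p \leq 4^k}\!\left(1 - \frac{1}{p}\right) \;=\; \frac{e^{-\gamma}}{\log(4^k)}\bigl(1 + o(1)\bigr)
\]
by Mertens' third theorem, and this asymptotic depends only on $4^k$, not on $n$ or $Z_n$, so it is uniform in $n \in (2^{k/2}, 2^k]$. Substituting $\psi(n) = 1/(4n)$ and combining the two bounds delivers the claimed lower bound (up to adjusting the multiplicative constant from the disjointness step).

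The main technical point is the uniformity: we must ensure that the $o(1)$ in Mertens does not depend on the (possibly very large or very small) size of $Z_n$. This is handled automatically because Mertens bounds the prime product up to a fixed cutoff $4^k$, and the elementary monotonicity argument $\prod_{p\,\mid\, Z_n,\,p \leq 4^k}(1-1/p) \geq \prod_{p \leq 4^k}(1-1/p)$ decouples the estimate from the particular arithmetic of $Z_n$. The only genuine subtlety is the boundary behavior at $a = 0$ and $a = Z_n$, but since these contribute at most one extra interval of length $2\psi(n)/Z_n = O(1/(nZ_n))$ which is absorbed into the $o(1)$ term, they do not affect the leading asymptotic.
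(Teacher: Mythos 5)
Your proposal is correct and follows essentially the same route as the paper: count the admissible residues $a$ (which for $Q_k \mid Z_n$ is exactly $Z_n \varphi(Q_k)/Q_k = Z_n \prod_{p \mid Z_n,\, p \leq 4^k}(1-1/p)$), multiply by $2\psi(n)/Z_n$ via disjointness, drop the restriction $p \mid Z_n$ to get the monotone lower bound $\prod_{p\leq 4^k}(1-1/p)$, and invoke Mertens' third theorem. The one place you hedge unnecessarily is the boundary: since $a=0$ and $a=Z_n$ are simultaneously admissible or inadmissible (both being governed by $\gcd(Z_n,Q_k)$), the two clipped half-intervals at $0$ and $1$ either recombine into exactly one full interval or are both absent, so $\lambda(S_n^*)$ \emph{equals} $2\psi(n)\varphi(Q_k)/Q_k$ with no correction term needed. (Also note the paper's proof uses $\psi(n)=1/(2n)$ rather than the $1/(4n)$ declared earlier in that section — a discrepancy in the source, not in your argument, and your parenthetical about adjusting the constant already covers it.)
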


\begin{lemma}[Pollington-Vaughan style overlap estimate]\label{lemma 3} Let $m, n \in(2^{k / 2}, 2^{k}]$ with $m \neq n$. Set
\begin{align}
D(Z_{m}, Z_{n})=\frac{\max \left(Z_{m} \psi(n), Z_{n} \psi(m)\right)}{\gcd(Z_{m}, Z_{n})} .
\end{align}
Furthermore, when $D(Z_{m}, Z_{n}) \geq 1$, then set
\begin{align}
P(Z_{m}, Z_{n})=\prod_{\substack{p \mid \frac{Z_{m} Z_{n}}{(Z_{m}, Z_{n})^{2}}\\ D(Z_{m}, Z_{n})<p \leq 4^{k}}}\left(1+\frac{1}{p}\right), 
\end{align}
where the product ranges over all primes $p$ in the specified range. When $D(Z_{m}, Z_{n})<1$, then set $P(Z_{m}, Z_{n})=0$. Then
\begin{align}
\lambda(S_{m}^{*} \cap S_{n}^{*}) \ll \frac{\sqrt{\psi(m) \psi(n)}}{4^{k}}+P(Z_{m}, Z_{n}) \lambda\left(S_{m}^{*}\right) \lambda(S_{n}^{*}).
\end{align}
\end{lemma}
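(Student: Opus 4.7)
The approach is the classical Pollington--Vaughan overlap strategy, adapted to the modified sets $S_n^*$ whose centres $a/Z_n$ are restricted by the condition that every prime factor of $\gcd(a, Z_n)$ exceeds $4^k$. First I would decompose
\begin{align}
    \lambda(S_m^* \cap S_n^*) = \sum_{(a,b)\in \mathcal{P}} \lambda(I_a^m \cap J_b^n),
\end{align}
where $I_a^m$, $J_b^n$ denote the constituent intervals of $S_m^*$, $S_n^*$, and $\mathcal{P}$ collects the admissible pairs satisfying both coprimality side-conditions. Each non-empty intersection has measure at most $2\min(\psi(m)/Z_m,\,\psi(n)/Z_n)\leq 2\sqrt{\psi(m)\psi(n)/(Z_m Z_n)}$, and $(a,b)$ contributes only if $|aZ_n - bZ_m|\leq Z_n\psi(m)+Z_m\psi(n)$. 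Setting $d=\gcd(Z_m,Z_n)$ and $Z_m=dM$, $Z_n=dN$ with $(M,N)=1$ rewrites this as $|aN - bM|\leq N\psi(m)+M\psi(n)\leq 2D(Z_m,Z_n)$, which is the natural place to split by the size of $D$.

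When $D(Z_m,Z_n)<1$, the integer $|aN-bM|$ must vanish, forcing $(a,b)=(Mt,Nt)$ for $0\leq t\leq d$. The coprimality condition in the definition of $S_n^*$ now forces the prime factors of $M$ (and of $\gcd(t,d)$) to exceed $4^k$, and symmetrically for $N$; combined with the individual intersection length $\leq 2\sqrt{\psi(m)\psi(n)/(Z_m Z_n)}$, a direct count yields the contribution $\ll \sqrt{\psi(m)\psi(n)}/4^k$, which accounts for the first term on the right-hand side (and explains the declaration $P(Z_m,Z_n)=0$ in this regime).

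When $D(Z_m,Z_n)\geq 1$, for each $v$ with $|v|\leq 2D(Z_m,Z_n)$ the solutions of $aN-bM=v$ form a one-parameter family. I would then run a Brun-type sieve over primes $p\mid MN=Z_m Z_n/d^2$ in the window $D(Z_m,Z_n)<p\leq 4^k$: since $(M,N)=1$, each such $p$ divides exactly one of $M$ or $N$, so membership in $S_m^*$ or $S_n^*$ excludes the fraction of the respective progression on $a$ or $b$ in which $p\mid \gcd(a,Z_m)$ or $p\mid \gcd(b,Z_n)$. Accumulating these local densities produces precisely the correction $\prod(1+1/p)=P(Z_m,Z_n)$, and a Mertens-type estimate aligns the surviving count with $\lambda(S_m^*)\lambda(S_n^*)$ up to absolute constants.

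The decisive difficulty is calibrating the sieve so that the prime window is exactly $(D(Z_m,Z_n),4^k]$: primes below $D$ are already absorbed by the combinatorics of the overlap condition and, if kept in the sieve, would oversweep the count; primes above $4^k$ lie outside the admissibility range set by $S_n^*$ itself and must not contribute. Showing that what survives matches $P(Z_m,Z_n)$ rather than a larger product over all primes $\leq 4^k$ is the central bookkeeping step, and mirrors the argument used in Pollington--Vaughan and subsequently refined in Koukoulopoulos--Maynard.
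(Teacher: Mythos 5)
There is a genuine structural gap. The correct decomposition is by whether $a/Z_m = b/Z_n$ (``equal fractions'') or $a/Z_m \neq b/Z_n$ (``distinct fractions''), and \emph{both} contributions must be handled unconditionally, for every value of $D(Z_m,Z_n)$. Your split into the regimes $D<1$ and $D\geq 1$ does not isolate these two sources of overlap. In the regime $D\geq 1$ you run the Brun sieve over all solutions of $aN-bM=v$, $|v|\leq 2D$, including $v=0$; but the $v=0$ pairs $(a,b)=(Mt,Nt)$, $0\leq t\leq d$, lie on a one-parameter diagonal family and cannot be counted by the product of local densities that the sieve produces --- their number is controlled instead by the observation that a non-empty admissible family forces a prime $p>4^{k}$ to divide $MN=Z_mZ_n/\gcd(Z_m,Z_n)^2$, whence $\gcd(Z_m,Z_n)\leq \min(Z_m,Z_n)/4^{k}$, and that is what yields the additive term $\sqrt{\psi(m)\psi(n)}/4^{k}$. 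Your write-up only invokes this $4^{k}$ argument when $D<1$, so when $D\geq 1$ your bound would be $P(Z_m,Z_n)\,\lambda(S_m^*)\lambda(S_n^*)$ alone, which is not what the lemma asserts and is not generally an upper bound: the diagonal term must appear for all $D$.

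Concretely, the paper's proof first bounds the overlap by
\begin{align}
[\min]\sum_{a}\sum_{b}\mathds{1}\Big(\Big|\tfrac{a}{Z_m}-\tfrac{b}{Z_n}\Big|\leq 2[\max]\Big),
\end{align}
where the sums carry the admissibility conditions on $(a,Z_m)$ and $(b,Z_n)$, then splits this into the piece with $0<|a/Z_m-b/Z_n|$ (handled by the Pollington--Vaughan sieve, giving $\ll P(Z_m,Z_n)\lambda(S_m^*)\lambda(S_n^*)$; when $D<1$ the inner sum over $0<j\leq 2D$ is essentially empty, so $P=0$ captures this), and the piece with $a/Z_m = b/Z_n$ (handled by the $4^{k}$ argument, giving $\ll \sqrt{\psi(m)\psi(n)}/4^{k}$). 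A smaller issue: even in your $D<1$ branch, $|aN-bM|\leq 2D$ does not force $aN=bM$ once $D\geq 1/2$; you need either $D<1/2$ or an additional remark that the $|aN-bM|=1$ contribution is absorbed. Your sieve description for distinct fractions is on the right track, but the case analysis needs to be reorganized along the equal/distinct-fraction split rather than along the size of $D$.
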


\begin{lemma}[Chung-Erd\H{o}s inequality]\label{lemma 4} Let $\mathcal{A}_{m}, 1 \leq m \leq M$, be events in a probability space $(\Omega, \mathcal{F}, \mathbb{P})$, such that $\mathbb{P}\left(\mathcal{A}_{m}\right)>0$ for at least one value of $m$. Then 
$$
\mathbb{P}\left(\bigcup_{m=1}^{M} \mathcal{A}_{m}\right) \geq \frac{\left(\displaystyle\sum_{m=1}^{M} \mathbb{P}\left(\mathcal{A}_{m}\right)\right)^{2}}{\displaystyle\sum_{1 \leq m, n \leq M} \mathbb{P}\left(\mathcal{A}_{m} \cap \mathcal{A}_{n}\right)}.
$$
\end{lemma}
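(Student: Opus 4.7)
The plan is to prove the Chung--Erd\H{o}s inequality via the standard second moment / Cauchy--Schwarz argument, which handles such $L^2$-type lower bounds cleanly. The strategy is to introduce the counting random variable
\begin{align}
X = \sum_{m=1}^{M} \mathds{1}_{\mathcal{A}_m},
\end{align}
so that $\{X > 0\} = \bigcup_{m=1}^{M} \mathcal{A}_m$, and then to relate the first and second moments of $X$ to the probability of this union.

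First I would compute $\mathbb{E}[X] = \sum_{m=1}^{M} \mathbb{P}(\mathcal{A}_m)$ and $\mathbb{E}[X^2] = \sum_{1 \leq m,n \leq M} \mathbb{P}(\mathcal{A}_m \cap \mathcal{A}_n)$ by linearity of expectation (the hypothesis that at least one $\mathbb{P}(\mathcal{A}_m) > 0$ guarantees $\mathbb{E}[X^2] > 0$, so the denominator in the claim is nonzero). Next, since $X \geq 0$, I would use the identity $X = X \cdot \mathds{1}_{\{X > 0\}}$ and apply the Cauchy--Schwarz inequality in the form
\begin{align}
\mathbb{E}[X]^2 = \mathbb{E}\bigl[X \cdot \mathds{1}_{\{X > 0\}}\bigr]^2 \leq \mathbb{E}[X^2] \cdot \mathbb{E}\bigl[\mathds{1}_{\{X > 0\}}^2\bigr] = \mathbb{E}[X^2] \cdot \mathbb{P}(X > 0).
\end{align}
Rearranging gives
\begin{align}
\mathbb{P}\left(\bigcup_{m=1}^{M} \mathcal{A}_m\right) = \mathbb{P}(X > 0) \geq \frac{\mathbb{E}[X]^2}{\mathbb{E}[X^2]} = \frac{\left(\sum_{m=1}^{M} \mathbb{P}(\mathcal{A}_m)\right)^2}{\sum_{1 \leq m,n \leq M} \mathbb{P}(\mathcal{A}_m \cap \mathcal{A}_n)},
\end{align}
which is exactly the stated inequality.

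There is no real obstacle here since the argument is a two-line application of Cauchy--Schwarz to a nonnegative integer-valued random variable; the only thing worth being careful about is to note that $\mathbb{P}(X > 0) > 0$ follows from the positivity of some $\mathbb{P}(\mathcal{A}_m)$ so no degenerate case arises, and to remark (if desired) that the diagonal $m = n$ terms on the right contribute $\sum_m \mathbb{P}(\mathcal{A}_m)$, which is consistent with the expansion of $X^2$. If one prefers a self-contained write-up without invoking Cauchy--Schwarz by name, the same inequality follows from expanding $\mathbb{E}\bigl[(X - c\mathds{1}_{\{X>0\}})^2\bigr] \geq 0$ and optimizing in $c$, but the Cauchy--Schwarz route is shortest.
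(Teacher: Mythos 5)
Your proof is correct. The paper does not prove this lemma itself; it simply cites \cite[Theorem 1.4.3d]{chandra2012borel}, so your write-up supplies a self-contained argument where the paper defers to a reference. The argument you give — setting $X = \sum_m \mathds{1}_{\mathcal{A}_m}$, computing $\mathbb{E}[X]$ and $\mathbb{E}[X^2]$ by linearity, and applying Cauchy--Schwarz to $X = X\cdot\mathds{1}_{\{X>0\}}$ — is the standard second-moment proof of the Chung--Erd\H{o}s inequality, and all the steps check out: the hypothesis that some $\mathbb{P}(\mathcal{A}_m)>0$ guarantees $\mathbb{E}[X^2]>0$, so the final division is legitimate, and the identification $\{X>0\}=\bigcup_m \mathcal{A}_m$ is exact.
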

\begin{proof}
See \cite[Theorem 1.4.3d]{chandra2012borel}.
\end{proof}

\begin{lemma}[Cassels' zero-one law \cite{Cassels_1950}]\label{lemma 5} Let $(\xi(m))_{m \geq 1}$ be a sequence of non-negative real numbers. For $m \geq 1$, set $\mathcal{A}_{m}=\displaystyle\bigcup_{0 \leq a \leq m}\left(\frac{a}{m}-\frac{\xi(m)}{m}, \frac{a}{m}+\frac{\xi(m)}{m}\right)$. Let $\mathcal{A}$ be the set of those $\alpha \in[0,1]$, which are contained in infinitely many $\mathcal{A}_{m}$. Then the Lebesgue measure of $\mathcal{A}$ is either 0 or 1.
\end{lemma}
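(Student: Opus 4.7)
My plan is to establish the dichotomy via a Lebesgue density argument that exploits the key structural property of the sets $\mathcal{A}_m$: each $\mathcal{A}_m$ is, by construction, invariant under the rotation $\alpha \mapsto \alpha + 1/m \pmod 1$, i.e., it is $(1/m)$-periodic. This near-full rotational symmetry at ever-finer scales is what should force $\mathcal{A} = \limsup_m \mathcal{A}_m$ to be either null or of full measure, echoing the classical strategy of Cassels \cite{Cassels_1950}.

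Suppose for contradiction that $0 < \lambda(\mathcal{A}) < 1$, so both $\mathcal{A}$ and its complement $\mathcal{A}^c$ have positive Lebesgue measure. By the Lebesgue density theorem I can choose a density point $\alpha_0 \in \mathcal{A}$ of $\mathcal{A}$ and a density point $\alpha_1 \in \mathcal{A}^c$ of $\mathcal{A}^c$. Fixing $\eta \in (0, 1/2)$, there exists $\delta_0 > 0$ such that for every $\delta \in (0, \delta_0]$,
\begin{equation*}
\lambda\bigl(\mathcal{A} \cap (\alpha_0 - \delta, \alpha_0 + \delta)\bigr) \geq (1-\eta)\,2\delta,
\qquad
\lambda\bigl(\mathcal{A} \cap (\alpha_1 - \delta, \alpha_1 + \delta)\bigr) \leq \eta\,2\delta.
\end{equation*}

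The heart of the argument is to transport the first density estimate to the second by exploiting the periodicity. Set $r = \alpha_1 - \alpha_0$. For each $m$, pick an integer $k_m$ with $|k_m/m - r| \leq 1/(2m)$; since $\mathcal{A}_m + k_m/m = \mathcal{A}_m$, the translate $\mathcal{A}_m + r$ is merely a shift of $\mathcal{A}_m$ by the tiny amount $\varepsilon_m = r - k_m/m$ with $|\varepsilon_m| \leq 1/(2m)$. Consequently, for any fixed $\delta \leq \delta_0$ and all sufficiently large $m$ (with $1/m \ll \eta\delta$), the boundary-discrepancy estimate
\begin{equation*}
\bigl| \lambda\bigl(\mathcal{A}_m \cap (\alpha_0 - \delta, \alpha_0 + \delta)\bigr) - \lambda\bigl(\mathcal{A}_m \cap (\alpha_1 - \delta, \alpha_1 + \delta)\bigr) \bigr| \ll \tfrac{1}{m}
\end{equation*}
holds. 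Setting $U_N := \bigcup_{m \geq N} \mathcal{A}_m$ and taking $N \to \infty$, so that $\lambda(U_N \cap I) \downarrow \lambda(\mathcal{A} \cap I)$ by downward continuity of measure, this approximate translation invariance lifts from the finite level $\mathcal{A}_m$ to the limsup $\mathcal{A}$. The outcome is
$\lambda\bigl(\mathcal{A} \cap (\alpha_0 - \delta, \alpha_0 + \delta)\bigr) = \lambda\bigl(\mathcal{A} \cap (\alpha_1 - \delta, \alpha_1 + \delta)\bigr)$,
which contradicts the two density estimates whenever $\eta < 1/2$. Therefore $\lambda(\mathcal{A}) \in \{0,1\}$.

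The principal difficulty I foresee is the last step: propagating the near-invariance of each individual $\mathcal{A}_m$ up to the limsup $\mathcal{A}$. Since $\limsup$ does not commute with set-translation in general, one cannot simply declare $\mathcal{A}$ to be translation-invariant from the fact that each $\mathcal{A}_m$ is essentially so at the scale $1/m$. The technical work is to argue that, because we restrict to a \emph{fixed} interval while $m \to \infty$, the boundary errors $O(1/m)$ shrink below the density threshold $\eta$ uniformly in $m \geq N$, allowing the estimate to survive the passage to $\limsup$. An alternative (and perhaps cleaner) route is to run the same rotation argument directly on the open sets $U_N$ via Fubini/convolution with $\mathbf{1}_{(-\delta,\delta)}$, showing that $\lambda(U_N \cap (\alpha_0-\delta, \alpha_0+\delta))$ and $\lambda(U_N \cap (\alpha_1-\delta, \alpha_1+\delta))$ differ by $O(1/N)$, and then taking $N\to\infty$.
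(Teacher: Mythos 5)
The paper does not actually prove Lemma~\ref{lemma 5}; it simply cites Cassels' original paper~\cite{Cassels_1950}, so there is no in-text argument to compare against. Evaluated on its own merits, your proposal contains a genuine gap, and it is precisely at the point you flag as "the principal difficulty": the passage from the per-$m$ near-balance of $\mathcal{A}_m$ to a balance statement for the tail union $U_N=\bigcup_{m\geq N}\mathcal{A}_m$, and hence for $\mathcal{A}=\bigcap_N U_N$.

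The estimate $\bigl|\lambda(\mathcal{A}_m\cap I_0)-\lambda(\mathcal{A}_m\cap I_1)\bigr|\ll 1/m$ is a bound on a \emph{difference of measures}, and it is correct. But what controls $\lambda(U_N\cap I_0)-\lambda(U_N\cap I_1)$ is not a collection of marginal statements about each $\mathcal{A}_m$; the measure of a union is governed by the joint correlation structure of the $\mathcal{A}_m$'s inside $I_0$ versus inside $I_1$, and this can differ even when every individual $\mathcal{A}_m$ is balanced. If you try to make the transport rigorous through symmetric differences, you would write $T_r(U_N)\triangle U_N\subseteq\bigcup_{m\geq N}\bigl(T_{\varepsilon_m}(\mathcal{A}_m)\triangle\mathcal{A}_m\bigr)$ with $\varepsilon_m=r-k_m/m$, $|\varepsilon_m|\leq 1/(2m)$. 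However, $(T_{\varepsilon_m}(\mathcal{A}_m)\triangle\mathcal{A}_m)\cap I_1$ consists of about $2\delta m$ "moon"-shaped slivers each of width up to $|\varepsilon_m|$, so its measure is $O(\delta)$, \emph{not} $O(1/m)$. Summing over $m\geq N$ gives a divergent series, and the proposed estimate $\lambda(U_N\cap I_0)-\lambda(U_N\cap I_1)=O(1/N)$ does not follow. The same obstruction defeats the alternative Fubini/convolution route you mention, which would still require a summable per-$m$ control on symmetric differences rather than on the difference of measures.

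A key idea that is missing is the switch from \emph{approximate} translation invariance by an arbitrary $r$ (which only gives per-$m$ errors $\sim 1/m$) to \emph{exact} invariance along arithmetic progressions: for a fixed modulus $q$ and any interval $I$ with endpoints in $(1/q)\mathbb{Z}$, the sets $\mathcal{A}_m$ with $q\mid m$ satisfy $\mathcal{A}_m+1/q=\mathcal{A}_m$ exactly, so the sub-limsup $\limsup_{q\mid m}\mathcal{A}_m$ is genuinely $1/q$-periodic with no error term. Cassels' argument exploits this exact self-similarity at rational scales (together with a Lebesgue-density/Knopp-type inflation step) rather than the approximate one you invoke, which is why his proof closes where yours currently does not. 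You should consult Cassels~\cite{Cassels_1950} for the details; as written, your sketch correctly identifies the crux of the problem but does not resolve it, and the claimed lifting step is the precise point where it breaks.
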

Finally, we use the following  lemma, which played a crucial role in Koukoulopoulos and Maynard's proof of the Duffin-Schaeffer conjecture.

\begin{lemma}\label{lemma 6} Let $(\eta(q))_{q \geq 1}$ be a sequence of real numbers in $[0, 1/2]$. Set $M(q, r)=\max (q \eta(r), r \eta(q))$. Assume that there are $X<Y$ such that
\begin{align}\label{KM bound}
\sum_{X \leq q \leq Y} \frac{\eta(q) \varphi(q)}{q} \leq 2 . 
\end{align}
For $t \geq 1$, set
$$L_{t}(q, r)=\sum_{\substack{p \mid q r / \gcd(q, r)^{2} \\ p \geq t}} \frac{1}{p}$$
and
$$\mathcal{E}_{t}=\left\{(q, r) \in(\mathbb{Z} \cap[X, Y))^{2}: \gcd(q, r) \geq t^{-1} M(q, r) \text { and } L_{t}(q, r) \geq 10\right\}.
$$
Then
\begin{align}
\sum_{(q, r) \in \mathcal{E}_{t}} \frac{\eta(q) \varphi(q)}{q} \frac{\eta(r) \varphi(r)}{r} \ll \frac{1}{t}.
\end{align}
\end{lemma}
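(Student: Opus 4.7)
The plan is to follow the strategy developed by Koukoulopoulos and Maynard, since this statement is essentially the central technical lemma around which their proof of the Duffin--Schaeffer conjecture is organised. As a first reduction, for each $(q,r) \in \mathcal{E}_t$ I would write $g=\gcd(q,r)$, $q=gu$, $r=gv$ with $\gcd(u,v)=1$, so that $qr/\gcd(q,r)^{2} = uv$. The large-gcd condition $\gcd(q,r) \geq M(q,r)/t$ then translates into the symmetric pair of inequalities $u\,\eta(r)\leq t$ and $v\,\eta(q)\leq t$, while the condition $L_t(q,r)\geq 10$ becomes $\sum_{p\mid uv,\,p\geq t}1/p \geq 10$. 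In particular, $uv$ has a large collection of distinct prime factors lying above $t$, and the $\eta$-values of $q$ and $r$ are jointly constrained through $g$.

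Next, I would dyadically decompose the summation: fix ranges $g\in[G,2G)$, $u\in[U,2U)$, $v\in[V,2V)$, and $\eta(q)\in[E_1,2E_1)$, $\eta(r)\in[E_2,2E_2)$, so that only $O(\log^{5} Y)$ blocks need be considered. Within each block the weights $\eta(q)\varphi(q)/q$ and $\eta(r)\varphi(r)/r$ are essentially constant, and the inner sum factors into an expression over coprime pairs $(u,v)$ of prescribed multiplicative shape times the count of admissible $g$.

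Third, and this is the heart of the argument, I would invoke the Koukoulopoulos--Maynard iterative compression mechanism on the bipartite ``GCD graph'' whose vertices are the integers in $[X,Y]$ and whose edges are the pairs in $\mathcal{E}_t$ surviving each block. One iteratively locates a prime $p\geq t$ dividing $uv$ for many edges and peels those edges off, at each step producing a strict decrease in a suitable quality functional. The hypothesis $\sum_{X\leq q\leq Y}\eta(q)\varphi(q)/q\leq 2$ controls the total $\ell^{1}$ mass of the graph and hence the number of iterations, while the hypothesis $L_t(q,r)\geq 10$ forces each surviving edge to carry a uniformly bounded contribution from the multiplicative weighting $\prod_{p\mid uv,\,p\geq t}(1+1/p)^{-1}$. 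Summing over the dyadic blocks one then extracts the savings factor $1/t$ from the smallest prime allowed to enter the compression, yielding the claimed bound $\ll 1/t$.

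The principal obstacle is precisely this compression step. A direct application of the Chinese Remainder Theorem together with multiplicativity of $\varphi$ loses factors of $\log^{O(1)}Y$ coming from the anatomy of integers with many prime factors above $t$, and it is non-trivial to absorb these losses into the $1/t$ savings. The Koukoulopoulos--Maynard innovation, which I would import wholesale, is to pass to a graph-theoretic setting in which one gains over the trivial bound by repeatedly restricting to a subgraph from which a typical common prime has been removed; a careful accounting of the interplay between the large-gcd and large-$L_t$ hypotheses ensures that the losses at each iteration telescope, so that only the target factor $1/t$ survives.
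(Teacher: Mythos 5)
The paper does not reprove this lemma: its entire proof is the one-line citation ``See \cite[Proposition 5.4]{KoukouMaynard}.'' The statement here is verbatim Proposition 5.4 of Koukoulopoulos and Maynard, and the remark following it in the paper only observes that the superfluous lower bound of $1$ on the sum in \eqref{KM bound} can be dropped.

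Your preliminary reductions are correct: writing $q=gu$, $r=gv$ with $g=\gcd(q,r)$ and $\gcd(u,v)=1$, the large-gcd condition does become $u\,\eta(r)\leq t$ and $v\,\eta(q)\leq t$, and $L_t(q,r)\geq 10$ becomes $\sum_{p\mid uv,\,p\geq t}1/p\geq 10$. But what you describe next is not a proof of the lemma; it is a one-paragraph tour of the architecture of the Koukoulopoulos--Maynard paper. The ``iterative compression on the GCD graph'' that you propose to ``import wholesale'' is not a black box one can invoke --- it \emph{is} the content of the lemma, occupying the bulk of \cite{KoukouMaynard}: the construction of GCD graphs and their quality, the quality-increment propositions across several cases depending on the anatomy of the edge set, the bookkeeping that converts the hypotheses $\sum\eta(q)\varphi(q)/q\leq 2$ and $L_t\geq 10$ into a geometric decay across iterations, and the final extraction of the $1/t$ saving. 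None of that is reproduced or even outlined at a level where correctness could be checked, so the sketch has a genuine gap at precisely the point where all the difficulty lies. Since the statement is an exact quotation of a published result, the right move in this paper is the one the authors take: cite \cite[Proposition 5.4]{KoukouMaynard} and do not attempt to re-derive it.
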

\begin{proof}
    See \cite[Proposition 5.4]{KoukouMaynard}.
\end{proof}
\begin{remark}
    As noted in \cite{ABM}, although the above lemma is formulated with the additional requirement that the sum in \eqref{KM bound} is at least $1$, this requirement is unnecessary and not used in the proof of \cite[Proposition 5.4]{KoukouMaynard}.
\end{remark}
We now prove Lemmas~\ref{lemma 2} and~\ref{lemma 3}.

\textbf{Proof of Lemma~\ref{lemma 2}.} Using $\psi(n)=1 /2n$, we have
$$
\begin{aligned}
\lambda(S_{n}^{*}) & =\frac{2\psi(n)}{Z_{n}} \#\left\{0 \leq a \leq Z_{n}: p \mid\left(a, Z_{n}\right) \Rightarrow p>4^{k}\right\} \\
& \geq \frac{1}{nZ_{n}} Z_{n} \prod_{p \leq 4^{k}}\left(1-\frac{1}{p}\right) \\
& =\frac{1}{n} \frac{1}{e^{\gamma} \log 4^{k}}(1+o(1)),
\end{aligned}
$$
where the product over primes is estimated using Mertens's third theorem \cite[Theorem 429]{Hardy}.

\textbf{Proof of Lemma~\ref{lemma 3}.} The following lemma adapts the overlap estimate of Pollington and Vaughan \cite{pollingtonvaughan1990}. For notational convenience, we use the following notation throughout this proof
$$
[\min ]=\min \left(\frac{\psi(m)}{Z_{m}}, \frac{\psi(n)}{Z_{n}}\right), \quad[\max ]=\max \left(\frac{\psi(m)}{Z_{m}}, \frac{\psi(n)}{Z_{n}}\right) .
$$
The Lebesgue measure of $S_{m}^{*} \cap S_{n}^{*}$ is bounded above by (see \cite[p.~39]{Harman1998})
$$ { [\min] } \sum_{\substack{1 \leq a \leq Z_{m} \\ p \mid\left(a, Z_{m}\right) \Rightarrow p>4^{k}}}\sum_{\substack{1 \leq a \leq Z_{n} \\ p \mid\left(b, Z_{n}\right) \Rightarrow p>4^{k}}}\mathds{1} \left(\left|\frac{a}{Z_{m}}-\frac{b}{Z_{n}}\right| \leq 2[\max ]\right).$$
Following Pollington and Vaughan's method, we bound the contribution from distinct fractions $a/Z_m \neq b/Z_n$, noting that we only sifted primes below $4^k$. Let $u_p=\nu_{p}(Z_m)$ and  $v_p=\nu_{p}(Z_n)$. Therefore, 
$$Z_{m}=\prod_{p} p^{u_p} \quad \text { and } \quad Z_{n}=\prod_{p} p^{v_p}.
$$
Let
$$
f=\prod_{\substack{p \\ u_p=v_p}} p^{u_p}, \quad g=\prod_{\substack{p \\ u_p \neq v_p}} p^{\min (u_p, v_p)}, \quad \text { and } \quad h=\prod_{\substack{p \\ u_p \neq v_p}} p^{\max (u_p, v_p)} .
$$

Note that
$$\frac{h}{g}=\prod_{\substack{p \\ u_p \neq v_p}} p^{\max (u_p, v_p)-\min (u_p, v_p)}=\frac{\prod_{p} p^{\max (u_p, v_p)}}{\prod_{p} p^{\min (u_p, v_p)}}=\frac{\operatorname{lcm}(Z_{m}, Z_{n})}{\gcd(Z_{m}, Z_{n})}=\frac{Z_{m} Z_{n}}{\gcd(Z_{m}, Z_{n})^{2}}.$$
We now use the representation
$$D(Z_{m}, Z_{n})=[\max ] f h$$
to obtain
$$[\min ] g f D(Z_{m}, Z_{n})=\psi(m) \psi(n).$$

Therefore, as mentioned in \cite{ABM}, an adaptation of the Pollington-Vaughan method yields

\begin{align}\label{sieve expression}
& \text { [min] } \sum_{\substack{1 \leq a \leq Z_{m} \\ p \mid\left(a, Z_{m}\right) \Rightarrow p>4^{k}}} \sum_{\substack{1 \leq b \leq Z_{n} \\ p \mid\left(b, Z_{n}\right) \Rightarrow p>4^{k}}}  \mathds{1}
\left(0<\left|\frac{a}{Z_{m}}-\frac{b}{Z_{n}}\right| \leq 2[\max ]\right)\\
\ll &[\min ] g \prod_{\substack{p \mid g\\ p \leq 4^{k}}}\left(1-\frac{1}{p}\right) f \prod_{\substack{p \mid f\\ p \leq 4^{k}}}\left(1-\frac{1}{p}\right)^{2} \sum_{\substack{0<j \leq 2 D(Z_{m}, Z_{n})\\ p \mid(j, h) \Rightarrow p>4^{k}}} \prod_{\substack{p \mid(j, f)\\ p \leq 4^{k}}}\left(1+\frac{1}{p}\right) \\
 \ll & \lambda\left(S_{m}^{*}\right) \lambda(S_{n}^{*}) P(Z_{m}, Z_{n}),
\end{align}
where the sum over $j$ is estimated by Brun's sieve, mirroring the technique employed by Pollington and Vaughan as in \cite[p.~195--196]{pollingtonvaughan1990}.

In the Duffin-Schaeffer framework of Pollington and Vaughan, the complete co-primality condition ensures that there is no contribution to the overlap $S_{m}^{*} \cap S_{n}^{*}$ from pairs $(a, b)$ with $a / Z_{m}=b / Z_{n}$. In our setting, these overlaps contribute
\begin{align}\label{Equality on a/z_n=b/Z_m} 
\ll[\mathrm{min}] \sum_{\substack{0 \leq a \leq Z_{m} \\ p \mid\left(a, Z_{m}\right) \Rightarrow p>4^{k}}} \sum_{\substack{0 \leq b \leq Z_{n} \\ p \mid\left(b, Z_{n}\right) \Rightarrow p>4^{k}}}  \mathds{1}\left(\frac{a}{Z_{m}}=\frac{b}{Z_{n}}\right) . 
\end{align}
We have $a / Z_{m}=b / Z_{n}$ if and only if
$$
a=j \frac{Z_{m}}{\gcd(Z_{m}, Z_{n})} \quad \text { and } \quad b=j \frac{Z_{n}}{\gcd(Z_{m}, Z_{n})}
$$
for some $j$ in $0 \leq j \leq \gcd(Z_{m}, Z_{n})$. We claim that for all $Z_{m}$ and $Z_{n}$ for which there exists $p \leq 4^{k}$ such that the $p$-adic valuation of $Z_{m}$ is different from that of $Z_{n}$, there is no admissible value of $j$, and the double sum above is empty. 
In other words, a non-vanishing contribution requires that $\nu_{p}(Z_m) = \nu_{p}(Z_n)$ for all $p \leq 4^{k}$. However, if $\nu_{p}(Z_m) = \nu_{p}(Z_n)$ for all $p > 4^{k}$ as well, then $Z_m = Z_n$, which is a contradiction. Therefore, a non-vanishing contribution implies that there exists a prime $p > 4^k$ such that $\nu_{p}(Z_m) \neq \nu_{p}(Z_n)$.

We now prove the claim mentioned above by contrapositive. Suppose that there exists an admissible value of $j$, which means that there exist $a$ and $b$ satisfying $0\leq a\leq Z_m$, $p\mid (a,Z_m) \implies p >4^{k}$, and  $0\leq b\leq Z_n$, $p\mid (b,Z_n) \implies p >4^{k}$ such that $\frac{a}{Z_m}=\frac{b}{Z_n}$, that is, $aZ_n=bZ_m$. This shows that $\nu_{p}(Z_m)=\nu_{p}(Z_n)$ for all $p \leq 4^{k}$. If not, there exists $q\leq 4^k$ such that $\nu_{q}(Z_m)\neq\nu_{q}(Z_n)$. Without loss of generality, assume $\nu_{q}(Z_n) >\nu_{q}(Z_m) \geq 0$, that is, $\nu_{q}(Z_n) -\nu_{q}(Z_m) \geq 1$. Then $q$ divides $a q^{\nu_{q}(Z_n) -\nu_{q}(Z_m)} \displaystyle\prod_{ p\neq q} p^{\nu_{p} (Z_n)}=b \displaystyle\prod_{p\neq q} p^{\nu_{p} (Z_m)}$, that is, $q$ divides $b$. In addition, $\nu_{q}(Z_n) \geq 1$ implies $q|Z_n$, so $q|(b,Z_n)$  with $q\leq 4^k$, which is a contradiction. This contradiction shows that $\nu_{p}(Z_m)=\nu_{p}(Z_n)$ for all $p \leq 4^{k}$.

Since a non-vanishing contribution implies the existence of a prime $p > 4^k$ such that $\nu_{p}(Z_m) \neq \nu_{p}(Z_n)$, we have $\gcd(Z_{m}, Z_{n}) \leq \min (Z_{m}, Z_{n}) / 4^{k}$. Thus, we can estimate \eqref{Equality on a/z_n=b/Z_m} by
$$\ll \min \left(\frac{\psi(m)}{Z_{m}}, \frac{\psi(n)}{Z_{n}}\right) \frac{\min (Z_{m}, Z_{n})}{4^{k}} \ll \frac{\sqrt{\psi(m) \psi(n)}}{\sqrt{Z_{m} Z_{n}}} \frac{\sqrt{Z_{m} Z_{n}}}{4^{k}} \ll \frac{\sqrt{\psi(m) \psi(n)}}{4^{k}} .$$
This together with \eqref{sieve expression} proves the lemma.

\subsubsection{Proof of Theorem~\ref{upper bound independent of a_N}, upper bound independent of the size of $a_{N}$.}\label{section upper bound independent of aN}
Assume that $k$ is a fixed, even, and ``large'' number. We decompose all numbers $Z_{n}$ as follows.
$$
Z_{n}=Z_{n}^{\text {smooth }} \cdot Z_{n}^{\text {rough }}, \quad 2^{k / 2}<n \leq 2^{k},
$$
where $Z_{n}^{\text {smooth }}$ has only prime factors of size at most $4^{k}$, and $Z_{n}^{\text {rough }}$ has only prime factors of size larger than $4^{k}$, that is, $Z_{n}^{\text {smooth }} = \displaystyle\prod_{p \leq 4^k}p^{\nu_{p}(Z_n)}$ and $Z_{n}^{\text {rough }} = \displaystyle\prod_{p > 4^k}p^{\nu_{p}(Z_n)}$.  Write
$$
\left\{b_{1}, \ldots, b_{H}\right\}=\left\{Z_{n}^{\text {rough }}, 2^{k / 2}<n \leq 2^{k}\right\}
$$

for some appropriate $H$, where we assume that $b_{1}, \ldots, b_{H}$ are sorted in increasing order. Clearly, $H \leq 2^{k}-2^{k / 2}$, but $H$ might actually be smaller since we could have $Z_{m}^{\text {rough }}=Z_{n}^{\text {rough }}$ for some $m \neq n$. For every $n \in (2^{k / 2}, 2^{k}]$, we now define
$$ Y_{n} \coloneqq Z_{n}^{\text {smooth }} p_{h}^{(k)}, $$
where $h$ is the uniquely defined index for which $Z_{n}^{\text {rough }}=b_{h}$, and where $p_{1}^{(k)}$ denotes the smallest prime exceeding $4^{k}, p_{2}^{(k)}$ denotes the second-smallest prime exceeding $4^{k}$, and so on. The point in the construction of the numbers $\left(Y_{n}\right)_{2^{k / 2}<n \leq 2^{k}}$ is that on the one hand, in Lemma~\ref{lemma 3}, we can replace $P(Z_{m}, Z_{n})$ and $D(Z_{m}, Z_{n})$ by $P\left(Y_{m}, Y_{n}\right)$ and $D\left(Y_{m}, Y_{n}\right)$ in the relevant situations, since the small prime factors of $Z_{m}$ and $Z_{n}$ are the same as those of $Y_{m}$ and $Y_{n}$, respectively. On the other hand, we have
$$\prod_{p \mid Y_{n}}\left(1-\frac{1}{p}\right) \leq \prod_{\substack{p \mid Z_{n}\\ p \leq 4^{k}}}\left(1-\frac{1}{p}\right) \leq\left(1-\frac{1}{4^{k}}\right)^{-1} \prod_{p \mid Y_{n}}\left(1-\frac{1}{p}\right),$$
where we used that by construction the ``small'' prime factors of $Z_{n}$ and $Y_{n}$ coincide, and $Y_{n}$ has an additional prime factor that exceeds $4^{k}$. Consequently,
\begin{equation}\label{bounding measure of S_n}
\frac{2 \psi(n) \varphi\left(Y_{n}\right)}{Y_{n}} \leq \lambda(S_{n}^{*}) \leq \frac{2 \psi(n) \varphi\left(Y_{n}\right)}{Y_{n}}\left(1-\frac{1}{4^{k}}\right)^{-1} . 
\end{equation}
Thus, we can control the size of the Euler totient function of $Y_{n}$ (while we cannot control it for $Z_{n}$, which might have many large prime factors). This will allow us to apply Lemma~\ref{lemma 6}. By Lemma~\ref{lemma 4}, we have
\begin{equation}\label{chung erdos inequality}
\lambda\Big(\bigcup_{2^{k / 2}<n \leq 2^{k}} S_{n}^{*}\Big) \geq \frac{\Big(\displaystyle\sum_{2^{k / 2}<n \leq 2^{k}} \lambda(S_{n}^{*})\Big)^{2}}{\displaystyle\sum_{2^{k / 2}<m, n \leq 2^{k}} \lambda(S_{m}^{*} \cap S_{n}^{*})} . 
\end{equation}

By Lemma~\ref{lemma 2}, we have
\begin{equation}\label{lower bounding measure of S_n}
\sum_{2^{k / 2}<n \leq 2^{k}} \lambda(S_{n}^{*}) \geq \sum_{2^{k / 2}<n \leq 2^{k}} \frac{1}{n} \frac{1}{e^{\gamma} \log 4^{k}}(1+o(1)) \geq 0.14
\end{equation}
for sufficiently large $k$. On the other hand, we can assume without loss of generality that
\begin{equation}\label{upper bounding measure of S_n}
\sum_{2^{k / 2}<n \leq 2^{k}} \lambda(S_{n}^{*}) \leq 0.99 
\end{equation}

(if the sum of measures is even larger, we can just delete some of the sets $S_{n}^{*}$ ). Thus, we can control the size of the numerator on the right-hand side of \eqref{chung erdos inequality}. To estimate the denominator of the right-hand side of \eqref{chung erdos inequality}, by Lemma~\ref{lemma 3}, we have
\begin{equation}\label{intersection of Sm and Sn}
\lambda(S_{m}^{*} \cap S_{n}^{*}) \ll \frac{\sqrt{\psi(m) \psi(n)}}{4^{k}}+P(Z_{m}, Z_{n}) \lambda\left(S_{m}^{*}\right) \lambda(S_{n}^{*}) . 
\end{equation}
Trivially,
\begin{equation}\label{bound for sqrt psi(m) psi(n)}
\sum_{2^{k / 2}<m, n \leq 2^{k}} \frac{\sqrt{\psi(m) \psi(n)}}{4^{k}} \ll 1 . \
\end{equation}
Note that whenever $Z_{m}^{\text {rough }} \neq Z_{n}^{\text {rough }}$, we have $\gcd(Z_{m}, Z_{n}) \leq 4^{-k} \min (Z_{m}, Z_{n})$, and so
$$
D(Z_{m}, Z_{n})=\frac{\max \left(Z_{m} \psi(n), Z_{n} \psi(m)\right)}{\gcd(Z_{m}, Z_{n})} \geq \frac{4^{k} \max (Z_{m}, Z_{n})}{2^{k} \min (Z_{m}, Z_{n})} \geq 2^{k}.
$$
Therefore, in this case
$$
P(Z_{m}, Z_{n}) \ll \prod_{2^{k} \leq p \leq 4^{k}}\left(1+\frac{1}{p}\right) \ll 1 .
$$

On the other hand, if $Z_{m}^{\text {rough }}=Z_{n}^{\text {rough }}$, then it is easily seen that $D\left(Y_{m}, Y_{n}\right)=D(Z_{m}, Z_{n})$. In the next displayed formula, all sums are taken over $m, n$ in the range $2^{k / 2}<m, n \leq 2^{k}$. Using \eqref{bounding measure of S_n}, we can estimate
\begin{align}\label{sum P(zm,zn)}
& \sum_{m, n} P(Z_{m}, Z_{n}) \lambda\left(S_{m}^{*}\right) \lambda(S_{n}^{*}) \\
 \ll &  \sum_{\substack{m, n\\
Z_{m}^{\text {rough }} \neq Z_{n}^{\text {rough }}}} \lambda\left(S_{m}^{*}\right) \lambda(S_{n}^{*})+\sum_{\substack{m, n \\
Z_{m}^{\text {rough }}=Z_{n}^{\text {rough }}}} \lambda\left(S_{m}^{*}\right) \lambda(S_{n}^{*})\prod_{\substack{p \mid \frac{Z_{m} Z_{n}}{(Z_{m}, Z_{n})^{2}}\\ D(Z_{m}, Z_{n})<p \leq 4^{k}}}\left(1+\frac{1}{p}\right)\\
 \ll &  \sum_{m, n} \lambda\left(S_{m}^{*}\right) \lambda(S_{n}^{*})+\sum_{m, n} \frac{\psi(m) \varphi\left(Y_{m}\right)}{Y_{m}} \frac{\psi(n) \varphi\left(Y_{n}\right)}{Y_{n}} \prod_{\substack{p \mid \frac{Y_{m} Y_{n}}{\left(Y_{m}, Y_{n}\right)^{2}}\\ D\left(Y_{m}, Y_{n}\right)<p \leq 4^{k}}}\left(1+\frac{1}{p}\right) \\
 \ll &  \sum_{m, n} \lambda\left(S_{m}^{*}\right) \lambda(S_{n}^{*})+\sum_{t=0}^{k-1} \sum_{\substack{2^{k / 2}<m, n \leq 2^{k} \\
D\left(Y_{m}, Y_{n}\right) \in[4^{t}, 4^{t+1})}} \frac{\psi(m) \varphi\left(Y_{m}\right)}{Y_{m}} \frac{\psi(n) \varphi\left(Y_{n}\right)}{Y_{n}} \prod_{\substack{p \mid \frac{Y_{m} Y_{n}}{\left(Y_{m}, Y_{n}\right)^{2}}\\ p > 4^{t+1}}}\left(1+\frac{1}{p}\right).
\end{align}

We divide the second sum in the last line of \eqref{sum P(zm,zn)} depending on whether $\left(Y_{m}, Y_{n}\right)$ is in $\mathcal{E}_{4^{t+1}}$ or not. Bounding the Euler product trivially, we obtain
\begin{align}
& \sum_{t=0}^{k-1} \sum_{\substack{2^{k / 2}<m, n \leq 2^{k}, \\D\left(Y_{m}, Y_{n}\right) \in [4^{t},4^{t+1}),\\\left(Y_{m}, Y_{n}\right) \notin \mathcal{E}_{{4}^{t+1}}    }} \frac{\psi(m) \varphi\left(Y_{m}\right)}{Y_{m}} \frac{\psi(n) \varphi\left(Y_{n}\right)}{Y_{n}} \prod_{\substack{p \mid \frac{Y_{m} Y_{n}}{\left(Y_{m}, Y_{n}\right)^{2}}\\ p > 4^{t+1}}}\left(1+\frac{1}{p}\right)\\
\ll & \sum_{t=0}^{k-1} \sum_{\substack{2^{k / 2}<m, n \leq 2^{k}, \\D\left(Y_{m}, Y_{n}\right) \in[4^{t},4^{t+1})}} \frac{\psi(m) \varphi\left(Y_{m}\right)}{Y_{m}} \frac{\psi(n) \varphi\left(Y_{n}\right)}{Y_{n}} \\
\ll & \sum_{m, n} \lambda\left(S_{m}^{*}\right) \lambda(S_{n}^{*}) .
\end{align}

In the case where $\left(Y_{m}, Y_{n}\right) \in \mathcal{E}_{4^{t+1}}$, we apply Lemma~\ref{lemma 6} with
\begin{align}
        \eta(q)=\begin{cases}
   \psi(n) &\quad \text {if } q =Y_{n} \text{ for some } n  \in (2^{k / 2}, 2^{k}],\\
   0  &\quad\text {otherwise,} 
   \end{cases}
\end{align}
and with $X$ and $Y$ defined as the minimum and maximum, respectively, of the set $ \{Y_n: 2^{k/2} < n \leq 2^{k}\}$. Then by \eqref{lower bounding measure of S_n}, and \eqref{upper bounding measure of S_n}, we have
\begin{align}
\sum_{X \leq q \leq Y} \frac{\eta(q) \varphi(q)}{q}=\sum_{2^{k / 2}<n \leq 2^{k}} \frac{\psi(n) \varphi\left(Y_{n}\right)}{Y_{n}} \in[1 / 16,1 / 2] ,
\end{align}
for sufficiently large $k$.

In the same way as Koukoulopoulos and Maynard deduce \cite[Theorem 1]{KoukouMaynard} from Lemma~\ref{lemma 6}, we obtain
\begin{align}
& \sum_{t=0}^{k-1} \sum_{\substack{2^{k / 2}<m, n \leq \leq 2^{k}, \\D\left(Y_{m}, Y_{n}\right) \leq 4^{t+1},\\
(Y_m, Y_n) \in \mathcal{E}_{4} t+1}} \frac{\psi(m) \varphi\left(Y_{m}\right)}{Y_{m}} \frac{\psi(n) \varphi\left(Y_{n}\right)}{Y_{n}} \prod_{\substack{p \mid \frac{Y_{m} Y_{n}}{\left(Y_{m}, Y_{n}\right)^{2}}\\ p > 4^{t+1}}}\left(1+\frac{1}{p}\right) 
\ll  \sum_{t=0}^{k-1} \frac{t}{4^{t}} \ll  1 .
\end{align}
Combining \eqref{intersection of Sm and Sn}, \eqref{bound for sqrt psi(m) psi(n)}, and \eqref{sum P(zm,zn)}, we obtain
$$
\sum_{2^{k / 2} \leq m, n \leq 2^{k}} \lambda(S_{m}^{*} \cap S_{n}^{*}) \ll 1 .
$$
Thus, by \eqref{chung erdos inequality} and \eqref{lower bounding measure of S_n}, for all sufficiently large $k$, 
$$
\lambda\left(\bigcup_{2^{k / 2}<n \leq 2^{k}} S_{n}^{*}\right) \gg 1
$$
where the implied constant is independent of $k$. Since $k$ can be chosen arbitrarily large, this implies
$$
\lambda\left(\bigcap_{\ell=1}^{\infty}\left(\bigcup_{n=\ell}^{\infty} S_{n}^{*}\right)\right)>0,
$$
and since $S_{n}^{*} \subset S_{n}$, we clearly also have
$$
\lambda\left(\bigcap_{\ell=1}^{\infty}\left(\bigcup_{n=\ell}^{\infty} S_{n}\right)\right)>0 .
$$
Thus, the measure of the limsup set is positive, which, by Cassels' zero-one law (Lemma~\ref{lemma 5}), implies that it is indeed $1$. In other words, almost all $\alpha \in[0,1]$ are contained in infinitely many sets $S_{n}$. Thus, for almost all $\alpha$, there are infinitely many $n$ such that
$$
\left\|\alpha Z_{n} \right\| \leq \frac{1}{4 n} .
$$
By the construction of $Z_n$, for every $n$, there is a unique $N$ such that $H_{N-1}< n \leq H_{N}$, and Lemma~\ref{H_N bound for general sequence} shows that $n > H_{N-1} \geq \frac{1}{4}H_{N}$. Therefore, for all $n \in \mathbb{N}$ with $n \in (H_{N-1},H_{N}]$, we have
$$  \left\|  \alpha Z_{n}\right\|\leq \frac{1}{4n} \leq \frac{1}{H_{N}},$$
which implies that
\begin{align}
   \lfloor \delta^{\alpha}_{\min }\rfloor(N)=&\displaystyle \min_{1\leq m\neq n\leq  N}\| \alpha\lfloor |a_{m}-a_{n}| \rfloor\| 
   = \displaystyle \min_{t\leq  H_{N}}\|\alpha Z_{t}\|
   \leq \|\alpha Z_{n}\| \leq \frac{1}{H_{N} }.
\end{align}
Thus,
\begin{align}
\lfloor \delta^{\alpha}_{\min }\rfloor(N) \leq \frac{1}{H_{N}}
\end{align}
for infinitely many $N$ and for almost all $\alpha$, as required.

\subsection{Proof of Theorem~\ref{upper bound in terms of H_N for sufficiently large N}}\label{Subsection:upper bound in terms of H_N for sufficiently large N}
To prove Theorem~\ref{upper bound in terms of H_N for sufficiently large N}, we use the methodology developed by Rudnick~\cite{Rudnick}. Let $A_N$ be defined as in Section~\ref{Notation}. Denote the set of floored positive differences by $\mathcal{Z}_N$, that is, $\mathcal{Z}_N = \lfloor(A_N - A_N)^+ \rfloor=\{Z_1, \dots, Z_{H_N}\}$. Therefore, $|\mathcal{Z}_N|=H_N$ and the hypothesis ensure that $\mathcal{Z}_N \subseteq \mathbb{N}$.

        Let $f \in C_{c}^\infty(\R)$ be a non-negative, smooth, even function supported in $[-1/2, 1/2]$ with $\int_{-\infty}^\infty f(x) dx = 1$. Define the function
\begin{align}
F_M(x) = \sum_{j \in \Z} f(M(x + j)),    
\end{align}
which is $1$-periodic and is localised on the scale $1/M$.

We then define the counting function
\begin{align}\label{D(N,M)(alpha) definition}
D(N,M)(\alpha) = \sum_{z \in \mathcal{Z}_N} F_M(\alpha z).
\end{align}
Since
\begin{align}
\int_{0}^{1}    F_M(\alpha z)\,d\alpha= \sum_{j \in \Z} \int_{0}^{1} f(M(\alpha z + j))\,d\alpha=\frac{1}{z} \sum_{j \in \Z} \int_{j}^{j+z}f(My)\,dy =\int_{-\infty}^{\infty}f(My)\,dy=\frac{1}{M},
\end{align}
the expected value of $D(N,M)$ is
\begin{align}
    \mathbb{E}(D(N,M))= \int_{0}^{1}  D(N,M)(\alpha) \,d\alpha= \sum_{z \in \mathcal{Z}_N}\int_{0}^{1}    F_M(\alpha z)\,d\alpha=\frac{|\mathcal{Z}_N|}{M}=\frac{H_N}{M}.
\end{align}
The Fourier expansion of $F_M$ is given by
\begin{align}
F_M(x) = \sum_{k \in \Z} \frac{1}{M} \widehat{f}\left(\frac{k}{M}\right) e(kx),    
\end{align}
where $\widehat{f}(y) = \int_{-\infty}^\infty f(x) e^{-2\pi i xy}\, dx$ and $e(x) = e^{2\pi i x}$. Putting this into the definition of $D(N,M)$ yields
\begin{align}
    D(N,M)(\alpha) = \sum_{z \in \mathcal{Z}_N} F_M(\alpha z)=\sum_{k \in \Z} \frac{1}{M} \widehat{f}\left(\frac{k}{M}\right) \sum_{z \in \mathcal{Z}_N}e(k\alpha z).
\end{align}
Since $ \frac{\widehat{f}(0)}{M} |\mathcal{Z}_N| = \frac{H_N}{M}$, The expected value comes from the term corresponding to $k = 0$. Therefore, the variance, denoted by $\text{Var }D(N,M)$, is the second moment of the sum over non-zero frequencies:
\begin{align}
\text{Var }(D(N,M)) =& \int_0^1 |D(N,M)(\alpha) - \mathbb{E}[D(N,M)]|^2 d\alpha\\
=& \int_0^1 \Big| \sum_{ 0\neq k \in \Z} \frac{1}{M} \widehat{f}\left(\frac{k}{M}\right) \sum_{z \in \mathcal{Z}_N} e(k\alpha z) \Big|^2 d\alpha \\
=& \sum_{k_1,k_2 \neq 0} \frac{1}{M^2} \widehat{f}\left(\frac{k_1}{M}\right) \widehat{f}\left(\frac{k_2}{M}\right) \sum_{v_1,v_2 \in \mathcal{Z}_N} \int_0^1 e(\alpha(k_1 v_1 - k_2 v_2)) d\alpha.
\end{align}
The integral equals $1$ if $k_1v_1 - k_2v_2 = 0$ and $0$ otherwise, so
$$\int_0^1 e(\alpha(k_1 v_1 - k_2 v_2)) d\alpha=\mathds{1}_{\{0\}}(k_1v_1-k_2v_2).$$

As $(Z_n)_{n \geq 1}$ consists of distinct integers, we apply the GCD sum estimate from \cite{Bondarenko-Seip} to obtain
\begin{align}
    \sum_{m,n=1}^{H_N} \frac{\gcd(Z_m, Z_n)}{\sqrt{Z_m Z_n}} \ll H_N\exp \left(   A\sqrt{\frac{\log H_N \log_{3}H_N}{\log_{2}H_N}}\right)
\end{align}
for some absolute constant $A$ less than $7$. Combined with \cite[Lemma 4]{Rudnick}, this yields
\begin{align}\label{eq:variance bound}
\text{Var }(D(N,M)) 
=&\sum_{v_1,v_2 \in \mathcal{Z}_N} \sum_{k_1,k_2 \neq 0} \frac{1}{M^2} \widehat{f}\left(\frac{k_1}{M}\right) \widehat{f}\left(\frac{k_2}{M}\right) \mathds{1}_{\{0\}}(k_1v_1-k_2v_2)\\
\ll & \frac{1}{M} \sum_{v_1,v_2 \in \mathcal{Z}_N}  \frac{\gcd(v_1, v_2)}{\sqrt{v_1 v_2}}\\
\ll & \frac{1}{M} \sum_{1\leq k,l \leq H_N}  \frac{\gcd(Z_k, Z_l)}{\sqrt{Z_k Z_l}}\\
\ll &  \frac{H_N}{M} \exp \left(   A\sqrt{\frac{\log H_N \log_{3} H_N}{\log_{2} H_N}}\right)\\
\ll&   \frac{H_N}{M} \exp \left(   A\sqrt{\frac{\log N \log_{3}N}{\log_{2}N}}\right)
\ll   \frac{H_N}{M} \exp \left(  \epsilon \log N\right)=\frac{H_N N^\epsilon}{M}
\end{align}
for any $\epsilon >0$.

\textbf{Proof of Theorem~\ref{upper bound in terms of H_N for sufficiently large N} (upper bound for all except finitely many $N$).}

The proof is carried out in three steps. First, we establish a relationship between $H_{N_{k+1}}$ and $H_{N_k}$ along some subsequence $N_k$. In the next step, we show that the statistic $D(N,M)$ concentrates around its mean for an appropriate choice of $M$. Then, we use this concentration to deduce the existence of small gaps.

\begin{lemma}\label{relation between H_N_k}
Let $\eta \in (0,2]$ be a fixed parameter and $N_k = \lfloor k^{\eta/2} \rfloor$. Then there exists a constant $C > 0$ such that for all $k \geq 1$,
\begin{align}
H_{N_{k+1}} \leq C_{\eta} H_{N_k},
\end{align}
where $C_{\eta}=5 + 2\eta$. In particular, one can take $C_{\eta}=9$.
\end{lemma}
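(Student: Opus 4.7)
The plan is to combine the two parts of Lemma~\ref{H_N bound for general sequence}---the recursion $H_{N+1} \le H_N + N$ and the lower bound $H_N \ge \lceil (N-1)/2 \rceil$---with a concavity estimate for the step size $d_k := N_{k+1} - N_k$.

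First I would bound $d_k$. Since $\eta \le 2$, the map $x \mapsto x^{\eta/2}$ is concave on $[1,\infty)$ with derivative at most $\eta/2$, so by the mean value theorem $(k+1)^{\eta/2} - k^{\eta/2} \le \eta/2$. Combining this with the elementary inequality $\lfloor a \rfloor - \lfloor b \rfloor \le a - b + 1$ gives the key estimate $d_k \le \eta/2 + 1$, which is the only place the restriction $\eta \le 2$ is used.

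Next I would iterate the recursion $d_k$ times, starting from $N_k$, to obtain
\[
H_{N_{k+1}} \le H_{N_k} + \sum_{j=0}^{d_k-1}(N_k + j) = H_{N_k} + d_k N_k + \binom{d_k}{2}.
\]
Substituting the lower-bound rearrangement $N_k \le 2 H_{N_k} + 1$ and the step-size bound $d_k \le \eta/2 + 1$, and collecting terms, yields
\[
H_{N_{k+1}} \le (1 + 2d_k) H_{N_k} + d_k + \binom{d_k}{2} \le (\eta + 3) H_{N_k} + \beta(\eta),
\]
where $\beta(\eta)$ is an explicit polynomial in $\eta$, uniformly bounded on $(0,2]$. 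For $H_{N_k} \ge 1$ I can absorb the additive constant into the multiplicative one, and a short algebraic check shows that the resulting constant is dominated by $5 + 2\eta$ on $(0,2]$, which is maximised at the value $9$ when $\eta = 2$.

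The only mild obstacle is the edge case $H_{N_k} = 0$ (which forces $N_k \le 1$), where the multiplicative inequality is vacuous. In that regime the right-hand side of the iterated bound is an absolute constant, so the conclusion follows either by restricting to indices $k$ with $H_{N_k} \ge 1$ or by verifying the finitely many small cases directly and enlarging $C_\eta$ if needed; the choice $C_\eta = 9$ is comfortable in either route.
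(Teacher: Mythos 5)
Your argument is correct and follows essentially the same route as the paper: both proofs iterate the recurrence $H_{N+1}\le H_N+N$ from Lemma~\ref{H_N bound for general sequence} across the window $[N_k,N_{k+1}]$, both control the step size $N_{k+1}-N_k$ via the mean value theorem applied to $x\mapsto x^{\eta/2}$ (using $\eta\le 2$ for concavity), and both close by substituting the lower bound $N\le 2H_N+1$. Two small differences: you keep the exact iterated sum $d_kN_k+\binom{d_k}{2}$ where the paper uses the cruder count of at most $2N_k$ new floored differences per added element, and you explicitly flag the degenerate case $H_{N_k}=0$ (which the paper's division by $H_{N_k}$ silently assumes away); both refinements are harmless and, if anything, slightly tighter than the paper's version.
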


\begin{proof}
With $A_N$ defined in Section~\ref{Notation}, we have $A_{N_{k+1}} = A_{N_k} \cup B$, where $B$ contains $N_{k+1} - N_k$ new elements.

Each new element $x \in B$ creates at most $2N_k$ new positive differences:
\begin{itemize}
    \item $\lfloor x - y\rfloor $ for $y \in A_{N_k}$ (positive if $x > y$).
    \item $\lfloor y - x\rfloor $ for $y \in A_{N_k}$ (positive if $y > x$).
\end{itemize}
Thus,
\begin{align}\label{H_N_k recurrence}
H_{N_{k+1}} \leq H_{N_k} +  2N_k (N_{k+1} - N_k) .     
\end{align}
Since $N_k = \lfloor k^{\eta/2} \rfloor$, we have
\begin{align}
N_{k+1} - N_k \leq (k+1)^{\eta/2} - (k^{\eta/2} - 1) \leq 1 + [(k+1)^{\eta/2} - k^{\eta/2}].    
\end{align}
We now apply the mean value theorem to $g(x) = x^{\eta/2}$ on $[k, k+1]$ to obtain
\begin{align}
    (k+1)^{\eta/2} - k^{\eta/2} = \frac{\eta}{2} c^{\eta/2 - 1} \quad \text{for some } c \in (k, k+1).
\end{align}
Since $c > k$ and $\eta \leq 2$ (so $\eta/2 - 1 \leq 0$), we have $c^{\eta/2 - 1} \leq k^{\eta/2 - 1}$. Therefore,
\begin{align}
    (k+1)^{\eta/2} - k^{\eta/2} \leq \frac{\eta}{2} k^{\eta/2 - 1},
\end{align}
and thus
\begin{align}\label{N_k recuurence}
    N_{k+1} - N_k \leq 1 + \frac{\eta}{2} k^{\eta/2 - 1}. 
\end{align}
From \eqref{H_N_k recurrence} and the lower bound $H_{N_k} \geq N_k/2$, we get
\begin{align}
\frac{H_{N_{k+1}}}{H_{N_k}} \leq 1 + \frac{2N_k(N_{k+1} - N_k)}{H_{N_k}} \leq 1 + \frac{2N_k(N_{k+1} - N_k)}{N_k/2} = 1 + 4(N_{k+1} - N_k) \leq 5 + 2\eta \, k^{\eta/2 - 1}.    
\end{align}
Since $\eta \leq 2$, we have $k^{\eta/2 - 1} \leq 1$ for all $k \geq 1$. Therefore,
\begin{align}
\frac{H_{N_{k+1}}}{H_{N_k}} \leq 5 + 2\eta.    
\end{align}
\end{proof}

\begin{lemma} \label{lemma:concentration}
Let $\eta \in (0,2]$, and define $N_k = \lfloor k^{2/\eta} \rfloor$ and $M_k = H_{N_k}/N_k^\eta$, where $N_k$ is defined as in Lemma~\ref{relation between H_N_k}. Then, for almost every $\alpha \in [0,1]$,
\begin{align}
D(N_k, M_k)(\alpha) \sim \frac{H_{N_k}}{M_k}.
\end{align}
\end{lemma}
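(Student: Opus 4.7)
The plan is to establish concentration by combining the variance bound from \eqref{eq:variance bound} with Chebyshev's inequality and the first Borel--Cantelli lemma along the subsequence $N_k = \lfloor k^{2/\eta}\rfloor$. The key observation is that with $M_k = H_{N_k}/N_k^{\eta}$, the mean $\mathbb{E}(D(N_k,M_k)) = H_{N_k}/M_k = N_k^{\eta}$ grows polynomially in $k$, while the variance can be controlled to grow strictly slower, yielding summability of the Chebyshev tail estimates.

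Concretely, I will first record that $\mathbb{E}(D(N_k,M_k)) = N_k^{\eta}$, and then invoke \eqref{eq:variance bound} with a parameter $\epsilon' > 0$ to be chosen, obtaining
\begin{align}
\mathrm{Var}(D(N_k,M_k)) \ll \frac{H_{N_k} N_k^{\epsilon'}}{M_k} = N_k^{\epsilon' + \eta}.
\end{align}
For any fixed $\delta > 0$, Chebyshev's inequality then yields
\begin{align}
\lambda\bigl(\{\alpha \in [0,1] : |D(N_k,M_k)(\alpha) - N_k^{\eta}| > \delta N_k^{\eta}\}\bigr) \ll \frac{N_k^{\epsilon'+\eta}}{\delta^2 N_k^{2\eta}} = \frac{N_k^{\epsilon'-\eta}}{\delta^2}.
\end{align}
Since $N_k \asymp k^{2/\eta}$, this measure is $\ll \delta^{-2} k^{-2+2\epsilon'/\eta}$.

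I would then choose $\epsilon' < \eta/2$, so that the exponent $-2 + 2\epsilon'/\eta < -1$ and the resulting series is summable in $k$. By the first Borel--Cantelli lemma, for each fixed $\delta > 0$ there is a null set $E_\delta$ outside of which the inequality $|D(N_k,M_k)(\alpha) - N_k^{\eta}| \leq \delta N_k^{\eta}$ holds for all sufficiently large $k$. Taking the countable union $\bigcup_{j\geq 1} E_{1/j}$, which remains a null set, one obtains that for almost every $\alpha \in [0,1]$,
\begin{align}
\lim_{k \to \infty} \frac{D(N_k,M_k)(\alpha)}{H_{N_k}/M_k} = 1,
\end{align}
which is the claimed asymptotic.

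The only real subtlety is the dependence of the admissible $\epsilon'$ on $\eta$: the variance bound in \eqref{eq:variance bound} allows any $\epsilon' > 0$, but one must select $\epsilon' < \eta/2$ to beat the polynomial growth of the subsequence $N_k$, so the constants in the Borel--Cantelli step depend on $\eta$. This is the main technical point, but it is routine once the exponents are tracked; no further estimate beyond \eqref{eq:variance bound} and Lemma~\ref{relation between H_N_k} is required.
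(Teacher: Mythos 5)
Your proposal follows essentially the same route as the paper: apply the variance bound~\eqref{eq:variance bound} together with Chebyshev's inequality to bound the tail probability along the subsequence $N_k=\lfloor k^{2/\eta}\rfloor$, choose the free parameter $\epsilon'<\eta/2$ so that the resulting series is summable, and conclude via the first Borel--Cantelli lemma (the paper likewise picks $0<\epsilon<\eta/2$ and sums $\sum_k N_k^{-(\eta-\epsilon)}<\infty$). The only cosmetic difference is that you spell out the countable union over $\delta=1/j$ at the end, which the paper leaves implicit.
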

\begin{proof}
Let $\delta > 0$ be arbitrary. By Chebyshev's inequality and the variance bound~\eqref{eq:variance bound}, we have
\begin{align}
\mathbb{P}\left( \left| D(N_k, M_k)(\alpha) - \frac{H_{N_k}}{M_k} \right| > \delta \cdot \frac{H_{N_k}}{M_k} \right) 
&\leq \frac{\text{Var } D(N_k, M_k)}{\delta^2 \left( \frac{H_{N_k}}{M_k} \right)^2} 
\ll \frac{\frac{H_{N_k}}{M_k} N_k^\epsilon}{\delta^2 \frac{H_{N_k}^2}{M_k^2}} 
= \frac{M_k N_k^\epsilon}{\delta^2 H_{N_k}}
\end{align}
for any $\epsilon >0$. Substituting $M_k = H_{N_k}/N_k^\eta$ gives
\begin{align}
\mathbb{P}\left( \left| \frac{D(N_k, M_k)}{H_{N_k}/M_k} - 1 \right| > \delta \right) \ll \frac{1}{\delta^2 N_k^{\eta - \epsilon}}.    
\end{align}
Choose $0 < \epsilon < \frac{\eta}{2}$, so that $\eta - \epsilon > \frac{\eta}{2} > 0$. Since  $\lfloor x \rfloor \geq \frac{x}{2}$ for $x>2$, we have $N_k \geq \frac{k^{2/\eta}}{2}$ for large $k$.
With the subsequence $N_k = \lfloor k^{2/\eta} \rfloor$, we then have
\begin{align}
\sum_{k=1}^\infty \frac{1}{N_k^{\eta - \epsilon}} < \infty.    
\end{align}
By the Borel–Cantelli lemma, for almost all $\alpha$, there exists $k_0(\alpha)$ such that
\begin{align}
\left| \frac{D(N_k, M_k)(\alpha)}{H_{N_k}/M_k} - 1 \right| \leq \delta \quad \text{for all } k \geq k_0(\alpha),    
\end{align}
which establishes the corollary.
\end{proof}
Lemma~\ref{lemma:concentration} implies that for almost all $\alpha$, there exists $k_0(\alpha)$ such that for all $k \geq k_0(\alpha)$,
\begin{align}
D(N_k, M_k)(\alpha) \sim \frac{H_{N_k}}{M_k} = N_k^{\eta}.    
\end{align}
In particular, since $N_k^{\eta} \to \infty$ as $k \to \infty$, we have $D(N_k, M_k)(\alpha) > 1$ for all sufficiently large $k$.

We have shown that for almost all $\alpha$, there exists $k_0(\alpha)$ such that $D(N_k, M_k)(\alpha) > 1$ for all $k \geq k_0(\alpha)$. It then follows from the definition of $D(N, M)(\alpha)$ (see~\eqref{D(N,M)(alpha) definition}) that for each $k \geq k_0(\alpha)$, there exists some $z \in \mathcal{Z}_{N_k}$ such that $F_{M_k}(\alpha z)>0$, that is, there exists $j_0 \in \Z$ such that $f(M_k(\alpha z+j_0))>0$.

Since $f$ is supported in $[-1/2, 1/2]$, we have $|\alpha z + j_0| \leq \frac{1}{2M_k}$ for some integer $j_0$, which implies $\|\alpha z\| \leq \frac{1}{M_k}$ for some $z \in \mathcal{Z}_{N_k}$ and all $k \geq k_0(\alpha)$. Therefore, for almost every $\alpha$, there exists $k_0(\alpha)$ such that
\begin{align}
     \lfloor \delta_{\min}^{\alpha} \rfloor(N_k) = \displaystyle \min_{\widetilde {z}\in \mathcal{Z}_{N_k} }\|\alpha \widetilde {z}\| \leq \|\alpha z\|  \leq \frac{1}{M_k} = \frac{N_k^\epsilon}{H_{N_k}}
\end{align}
for all $k \geq k_{0}(\alpha)$ and any $\epsilon >0$.

Let $N \geq \lfloor k_{0}(\alpha)^{2/\eta }\rfloor$. Then $N \in \big[ \lfloor (k_{0}(\alpha)+m)^{2/\eta}\rfloor, \lfloor (k_{0}(\alpha)+m+1)^{2/\eta}\rfloor  \big)$ for some integer $m \geq 0$, that is, $N \in [N_{k_{0}(\alpha)+m}, N_{k_{0}(\alpha)+m+1})$. Since $H_N$ increases in $N$, we have 
$H_{N_{k_{0}(\alpha)+m}} \leq H_N \leq H_{N_{k_{0}(\alpha)+m+1}}$. Applying Lemma~\ref{relation between H_N_k}, we obtain for all $N \geq \lfloor k_{0}(\alpha)^{2/\eta }\rfloor$,
\begin{align}
    \lfloor \delta_{\min}^{\alpha} \rfloor(N)  \leq  \lfloor\delta_{\min}^{\alpha} \rfloor(N_{k_{0}(\alpha)+m}) 
    \leq  \frac{N_{k_{0}(\alpha)+m}^\epsilon}{H_{N_{k_{0}(\alpha)+m}}}
    \leq  \frac{9N^\epsilon}{H_{N_{k_{0}(\alpha)+m+1}}}
    \leq \frac{9N^\epsilon}{H_{N}}
\end{align}
for any $\epsilon >0$.

Let $b >0$. Then, for all $N\geq \max (\lfloor k_{0}(\alpha)^{2/\eta }\rfloor,9^{2/b}),$
\begin{align}
     \lfloor \delta_{\min}^{\alpha} \rfloor(N)  
    \leq \frac{9N^{b/2}}{H_{N}}\leq \frac{N^{b}}{H_{N}}
\end{align}
for any $b >0$.

\section{Proof of Theorem~\ref{THM 3}}\label{Proof of theorem three}
We now provide a proof for Theorem~\ref{THM 3}. The following statement, widely known as Catlin's conjecture \cite{CATLIN1976}, was recently resolved as a consequence of Koukoulopoulos and Maynard's proof of the Duffin–Schaeffer conjecture \cite[Theorem 2]{KoukouMaynard}.

\begin{lemma}\label{Catlin Conjecture}
    Let $(\psi(k))_{k \geq 1}$ be a sequence of non-negative reals. Let $\mathcal{B}$ denote the set of those $\alpha \in[0,1]$ for which the inequality
$$
\left|\alpha-\frac{a}{k}\right| \leq \frac{\psi(k)}{k}
$$
has infinitely many solutions $(a, k)$ with $0 \leq a \leq k$. Then $\lambda(\mathcal{B})=0$ or $\lambda(\mathcal{B})=1$, according to whether the series
\begin{align}
    \sum_{k=1}^{\infty} \varphi(k) \sup _{b \geq 1}\left\{\frac{\psi(b k)}{b k}\right\}
\end{align}
is convergent or divergent, respectively.
\end{lemma}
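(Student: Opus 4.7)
The plan is to reduce Catlin's statement to the Duffin--Schaeffer conjecture (Theorem~\ref{Duffin-Schaeffer Conjecture}) via the divisor-enveloped approximation function
\begin{align}
\psi^{\#}(k) := \sup_{b \geq 1} \frac{\psi(bk)}{b}, \qquad \text{so that} \qquad \frac{\psi^{\#}(k)}{k} = \sup_{b \geq 1} \frac{\psi(bk)}{bk}.
\end{align}
Let $\mathcal{D}$ be the set of $\alpha \in [0,1]$ for which $|\alpha - a/k| \leq \psi^{\#}(k)/k$ has infinitely many \emph{coprime} solutions $(a,k)$. By Theorem~\ref{Duffin-Schaeffer Conjecture} (for divergence) and the first Borel--Cantelli lemma (for convergence), $\lambda(\mathcal{D}) \in \{0,1\}$ according as $\sum_k \varphi(k)\psi^{\#}(k)/k$ converges or diverges, which is exactly Catlin's series. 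It therefore suffices to prove $\lambda(\mathcal{B}) = \lambda(\mathcal{D})$, which I will establish by showing both inclusions hold modulo a Lebesgue null set.

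For $\mathcal{D} \subseteq \mathcal{B}$ modulo null, discard the countable (hence null) set of rationals together with the countable union over coprime pairs $(a,k)$ of the two-point boundary sets $\{\alpha : |\alpha - a/k| = \psi^{\#}(k)/k\}$. For any surviving $\alpha \in \mathcal{D}$ and any coprime $(a,k)$ in its infinite family, one has $|\alpha - a/k| < \sup_b \psi(bk)/(bk)$, so by definition of supremum some $b \geq 1$ satisfies $\psi(bk)/(bk) \geq |\alpha - a/k|$. The (typically non-coprime) pair $(ab, bk)$ then witnesses the Catlin inequality, and distinct coprime fractions $a_1/k_1 \neq a_2/k_2$ yield distinct unreduced pairs, so $\alpha$ has infinitely many Catlin witnesses, i.e.\ $\alpha \in \mathcal{B}$.

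For the reverse inclusion, fix $\alpha \in \mathcal{B}$ and group its infinitely many pairs $(a,k)$ by coprime reduction $a = d a'$, $k = d k'$ with $\gcd(a',k')=1$, noting that $|\alpha - a'/k'| = |\alpha-a/k| \leq \psi(dk')/(dk')$. If infinitely many distinct coprime $(a', k')$ arise, then, since each satisfies $|\alpha-a'/k'|\le\psi^\#(k')/k'$, we have $\alpha \in \mathcal{D}$ immediately. Otherwise only finitely many coprime reductions occur, so some fixed $(a', k')$ must carry infinitely many dilations $d$, which forces
\begin{align}
|\alpha - a'/k'| \leq \limsup_{d \to \infty} \frac{\psi(dk')}{dk'} =: r(k').
\end{align}
The crucial quantitative step is that convergence of Catlin's series forces $r(k') = 0$ for every $k'$: if instead $r(k') \geq c > 0$, then $\psi(dk')/(dk') \geq c/2$ for infinitely many $d$, and each such $d$ contributes a term $\varphi(dk')\,\psi^{\#}(dk')/(dk') \geq c/2$ to Catlin's series, producing divergence. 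Consequently, when the series converges, this ``Case B'' collapses to $\alpha = a'/k'$, a null set, giving $\mathcal{B} \subseteq \mathcal{D}$ modulo null and hence $\lambda(\mathcal{B}) = \lambda(\mathcal{D}) = 0$; when the series diverges, the already-proved inclusion $\mathcal{D} \subseteq \mathcal{B}$ together with $\lambda(\mathcal{D}) = 1$ yields $\lambda(\mathcal{B}) = 1$.

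The main obstacle is exactly the Case B analysis above: a priori $\alpha$ could accumulate around a single rational $a'/k'$ through infinitely many dilations $d$, belonging to $\mathcal{B}$ without any coprime witness for $\mathcal{D}$, and the union over $(a',k')$ of balls of radius $r(k')$ around $a'/k'$ need not be a null set from the Borel--Cantelli bound alone. The quantitative link that turns any positive $r(k')$ into a divergent tail of Catlin's series is what removes this obstruction and closes the equivalence $\lambda(\mathcal{B}) = \lambda(\mathcal{D})$.
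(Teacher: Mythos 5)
Your argument is correct in substance, but it takes a genuinely different route from the paper: the paper does not prove Lemma~\ref{Catlin Conjecture} at all, it simply cites it as Catlin's conjecture, resolved in \cite[Theorem 2]{KoukouMaynard}. What you have written out is essentially the standard deduction of Catlin's statement from the coprime Duffin--Schaeffer theorem (the paper's Theorem~\ref{Duffin-Schaeffer Conjecture}), which is also how Koukoulopoulos and Maynard obtain their Theorem~2 from their Theorem~1: pass to the envelope $\psi^{\#}(k)=\sup_{b\geq 1}\psi(bk)/b$, observe that the Catlin series is exactly the Duffin--Schaeffer series for $\psi^{\#}$, and show the two limsup sets agree up to a null set by reducing witnesses to lowest terms. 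Your two-case analysis of the reverse inclusion (infinitely many distinct coprime reductions versus a single reduction carrying infinitely many dilations), together with the observation that a positive $\limsup_{d}\psi(dk')/(dk')$ already forces divergence of the Catlin series --- so that Case~B collapses to a rational point precisely in the convergence case, which is the only case where you need it --- is the correct way to close the argument, and you rightly use only one inclusion in each half of the dichotomy. What your version buys is a proof that is self-contained given Theorem~\ref{Duffin-Schaeffer Conjecture}, and it makes transparent why the $\sup_{b}$ over multiples appears in Catlin's series; what the paper's citation buys is brevity.

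One small loose end, easily repaired: $\psi^{\#}(k)$ may equal $+\infty$ if $\psi(bk)/(bk)$ is unbounded over multiples of $k$, in which case Theorem~\ref{Duffin-Schaeffer Conjecture} does not literally apply to $\psi^{\#}$. But in that situation there are infinitely many $m$ with $\psi(m)\geq m/2$, so the intervals around $a/m$ for $0\leq a\leq m$ cover $[0,1]$, giving $\mathcal{B}=[0,1]$ while the series trivially diverges; alternatively, cap $\psi^{\#}(k)$ at $k$, which changes neither $\mathcal{D}$ nor the convergence behaviour of the series. This does not affect the validity of your approach.
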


\begin{lemma}\label{Satz 21}
Let $(\lambda_n)_{n \geq 1}$ be an arbitrary sequence of real numbers. Suppose that as a function of the index, \(\lambda\) satisfies the following condition: There exist two positive constants \(\epsilon\) and \(c\) such that whenever the index increases from \(n\) by more than \(\frac{n}{(\log n)^{1+\epsilon}}\), the value of \(\lambda\) increases by at least \(c\).  Then, the sequence $(\alpha \lambda_n)_{n \geq 1}$ is uniformly distributed for almost all $\alpha \in [0,1)$.
\end{lemma}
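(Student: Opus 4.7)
The plan is to verify Weyl's equidistribution criterion for almost every $\alpha$ through an $L^{2}$-estimate of the normalised Weyl sums, and then pass from mean-square to almost-sure convergence via a Gal--Koksma-type subsequence-and-filling argument. By Weyl's criterion it is enough to prove, for every nonzero integer $h$, that
\begin{align}
S_{N}(h,\alpha):=\frac{1}{N}\sum_{n=1}^{N} e(h\alpha\lambda_{n})\longrightarrow 0
\end{align}
for almost every $\alpha\in[0,1)$, where $e(x)=e^{2\pi ix}$; taking a countable intersection over $h\neq 0$ then yields uniform distribution on a set of full measure.

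First I will compute the second moment
\begin{align}
\int_{0}^{1}|S_{N}(h,\alpha)|^{2}\,d\alpha=\frac{1}{N}+\frac{2}{N^{2}}\,\operatorname{Re}\sum_{1\leq n<m\leq N}\int_{0}^{1} e\bigl(h\alpha(\lambda_{m}-\lambda_{n})\bigr)\,d\alpha,
\end{align}
bounding each off-diagonal integral by $\min\bigl(1,(\pi|h||\lambda_{m}-\lambda_{n}|)^{-1}\bigr)$. To exploit the hypothesis, I will iterate it: starting from index $n$ and forming $n_{0}=n$, $n_{i+1}=n_{i}+\lceil n_{i}/(\log n_{i})^{1+\epsilon}\rceil+1$, each application of the hypothesis gives $\lambda_{n_{i+1}}-\lambda_{n_{i}}\geq c$, and then a single application starting at $n_{k-1}$ together with the telescoping yields $\lambda_{m}-\lambda_{n}\geq kc$ for every $m\geq n_{k}$. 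In the range $n<m\leq 2n$ this produces $\lambda_{m}-\lambda_{n}\gg c(m-n)(\log n)^{1+\epsilon}/n$, with analogous dyadic-range bounds for $m>2n$; the $O(n/(\log n)^{1+\epsilon})$ pairs with $m-n$ below the threshold are absorbed into an $O(N^{2}/(\log N)^{1+\epsilon})$ trivial contribution. Summing the resulting estimates over $n$ yields
\begin{align}
\int_{0}^{1}|S_{N}(h,\alpha)|^{2}\,d\alpha \ll \frac{1}{|h|\,c\,(\log N)^{\epsilon}}.
\end{align}

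The last step will be to promote this mean-square decay to almost-everywhere convergence. I will choose a subsequence $(N_{k})$ along which $\sum_{k}(\log N_{k})^{-\epsilon}$ converges (for example $N_{k}=\lfloor\exp(k^{2/\epsilon})\rfloor$), apply Chebyshev's inequality and the first Borel--Cantelli lemma to deduce $S_{N_{k}}(h,\alpha)\to 0$ almost surely for each $h\neq 0$, and then interpolate to arbitrary $N$ using that $NS_{N}(h,\cdot)$ changes by at most one per index, together with the same second-moment bound applied to the tail sums $\sum_{N_{k}<n\leq M}e(h\alpha\lambda_{n})$. The hard part will be precisely this filling step: because the decay rate $(\log N)^{-\epsilon}$ is only marginally summable, every subsequence making $\sum_{k}(\log N_{k})^{-\epsilon}$ finite must have $N_{k+1}/N_{k}$ unbounded, so the naive estimate $|S_{N}-S_{N_{k}}|\leq (N_{k+1}-N_{k})/N_{k}$ is too weak and must be replaced by a Rademacher--Menshov type maximal inequality adapted to the approximately orthogonal exponentials $e(h\alpha\lambda_{n})$, or equivalently by maximising the $L^{2}$ estimate dyadically over $M\in[N_{k},N_{k+1}]$ and invoking Borel--Cantelli on the maxima.
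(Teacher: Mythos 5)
The paper does not prove this lemma: it cites it directly as Satz~21 of Weyl's 1916 memoir \cite{Weyl1916}, so there is no internal argument to compare against. Your plan, though, is essentially the classical one (and very likely Weyl's own): verify Weyl's criterion for each fixed nonzero $h$ by estimating $\int_0^1|S_N(h,\alpha)|^2\,d\alpha$, pass to an a.s.\ subsequence by Chebyshev and the first Borel--Cantelli lemma, and fill in the gaps. The iteration $n_{i+1}=n_i+\lceil n_i/(\log n_i)^{1+\epsilon}\rceil+1$ with telescoping is exactly the right way to extract quantitative growth $\lambda_m-\lambda_n\geq kc$ from the hypothesis, and your handling of the endpoint (one final application from $n_{k-1}$) is correct.

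The place where the proposal is weaker than it should be is the second-moment bound, and this is what forces you into the uncomfortable Rademacher--Menshov detour at the end. You claim
\begin{align}
\int_0^1|S_N(h,\alpha)|^2\,d\alpha\ll \frac{1}{|h|\,c\,(\log N)^{\epsilon}},
\end{align}
which for $\epsilon\leq 1$ is only marginally summable, so (as you correctly observe) any subsequence making $\sum_k(\log N_k)^{-\epsilon}$ finite must have $N_{k+1}/N_k\to\infty$, and the naive filling fails. But the estimate is lossy by a factor of $\log N/\log\log N$. For $n<m\leq 2n$ your own lower bound gives $\lambda_m-\lambda_n\gg c\,d\,(\log n)^{1+\epsilon}/n$ only for $d=m-n$ \emph{above} the threshold $n/(\log n)^{1+\epsilon}$; summing $1/d$ over that restricted range produces $\log\bigl((\log n)^{1+\epsilon}\bigr)\asymp\log\log n$, not $\log n$. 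The $O(n/(\log n)^{1+\epsilon})$ sub-threshold pairs contribute only $O(n/(\log n)^{1+\epsilon})$ via the trivial bound, and the pairs with $m>2n$ contribute $O(N/(c(\log n)^{1+\epsilon}))$ because there one already has $\lambda_m-\lambda_n\gg c(\log n)^{1+\epsilon}$. Summing over $n\leq N$ then gives
\begin{align}
\int_0^1|S_N(h,\alpha)|^2\,d\alpha\ll \frac{\log_2 N}{\min(1,|h|c)\,(\log N)^{1+\epsilon}}.
\end{align}
With this sharper bound the filling step is elementary for every $\epsilon>0$: take $N_k=\lfloor e^{k^\beta}\rfloor$ with $\beta\in(1/(1+\epsilon),1)$, so that $\sum_k(\log N_k)^{-(1+\epsilon)}\log_2 N_k<\infty$ while $N_{k+1}/N_k\to 1$, and then $|S_N-S_{N_k}|\leq 2(1-N_k/N_{k+1})\to 0$ uniformly for $N\in[N_k,N_{k+1})$. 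No maximal inequality is needed. As written, your proposal defers the filling step to an unexecuted Rademacher--Menshov argument, which is the genuinely hard part under your weaker $L^2$ bound; tightening the second-moment computation removes that difficulty entirely and yields a complete, elementary proof.
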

\begin{proof}
    See \cite[Satz 21]{Weyl1916}.
\end{proof}
\begin{lemma}\label{Weyl equidistribution}
    Let $(\lambda_n)_{n \geq 1}$ be a sequence of distinct integers. Then, the sequence $(\alpha \lambda_n)_{n \geq 1}$ is uniformly distributed mod $1$ for almost all real numbers $\alpha $.
\end{lemma}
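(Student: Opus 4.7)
The plan is to apply Weyl's equidistribution criterion, which reduces the claim to showing that for every fixed nonzero integer $h$, the exponential sum
\[
S_N(\alpha) = \sum_{n=1}^{N} e(h\alpha\lambda_n), \qquad e(x) = e^{2\pi i x},
\]
satisfies $S_N(\alpha)/N \to 0$ as $N \to \infty$ for almost every $\alpha \in [0,1)$. Taking a countable union over $h \in \mathbb{Z}\setminus\{0\}$ will then yield the result.

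Fix $h \neq 0$. The first step is the $L^2$ estimate
\[
\int_0^1 |S_N(\alpha)|^2 \, d\alpha = \sum_{1 \leq m,n \leq N} \int_0^1 e\bigl(h\alpha(\lambda_n - \lambda_m)\bigr)\, d\alpha = N,
\]
where we used that, since the $\lambda_n$ are distinct integers and $h \neq 0$, the integer $h(\lambda_n - \lambda_m)$ vanishes precisely when $m=n$. Chebyshev's inequality then gives, for every $\varepsilon > 0$,
\[
\lambda\bigl(\{\alpha \in [0,1): |S_N(\alpha)|/N > \varepsilon\}\bigr) \leq \frac{1}{\varepsilon^2 N}.
\]

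The second step handles the lack of summability of $1/N$ by passing to a subsequence. Set $N_k = k^2$; then $\sum_k 1/N_k = \sum_k 1/k^2 < \infty$, so the first Borel--Cantelli lemma shows that for almost every $\alpha$ we have $S_{N_k}(\alpha)/N_k \to 0$. To interpolate, note that for $N_k \leq N < N_{k+1}$ the trivial bound $|S_N(\alpha) - S_{N_k}(\alpha)| \leq N_{k+1} - N_k = 2k+1$ gives
\[
\left|\frac{S_N(\alpha)}{N}\right| \leq \frac{|S_{N_k}(\alpha)|}{N_k} + \frac{2k+1}{N_k},
\]
and both terms tend to zero: the first almost surely, and the second deterministically since $(2k+1)/k^2 \to 0$. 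Thus $S_N(\alpha)/N \to 0$ for almost every $\alpha$, and intersecting the corresponding full-measure sets over all $h \in \mathbb{Z}\setminus\{0\}$ completes the proof via Weyl's criterion.

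There is essentially no serious obstacle here; the argument is the classical $L^2$-plus-subsequence proof. The only mildly delicate point is the interpolation between the good subsequence $(N_k)$ and arbitrary $N$, which, in the absence of any hypothesis on the growth rate of $(\lambda_n)$, relies on the fact that the gap $N_{k+1} - N_k = 2k+1$ is negligible compared with $N_k = k^2$. Note that Lemma~\ref{Satz 21} is not directly applicable because the sequence $(\lambda_n)$ is only assumed to consist of distinct integers, with no control on how rapidly or regularly it grows; the $L^2$ method is what allows us to dispense with any such regularity hypothesis.
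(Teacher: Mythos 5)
Your argument is correct and is the classical $L^2$/Borel–Cantelli proof of Weyl's theorem on the metric equidistribution of $(\alpha\lambda_n)$ for distinct integers $\lambda_n$; the paper does not prove the lemma itself but refers to \cite[Theorem 4.1]{kuipers1974uniform}, whose proof is exactly this argument (second moment over $[0,1]$, passage to the subsequence $N_k=k^2$, and interpolation via the trivial bound on partial sums). Your closing remark about the inapplicability of Lemma~\ref{Satz 21} is also consistent with the paper's own remark, which uses Satz~21 only to equidistribute a subsequence of $(Z_n)$ rather than the full sequence.
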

\begin{proof}
    See \cite[Theorem 4.1]{kuipers1974uniform}.
\end{proof}
\begin{lemma}\label{Divergence lemma}
    Let $(a_n)_{n \geq 1}$ be a sequence of non-negative real numbers and $A$ be an infinite subset of $\mathbb{N}$ such that for each $k \in A$, $\displaystyle\sum_{\substack{d|k\\d\geq \log{k}}} a_d \geq \epsilon$ for some fixed $\epsilon >0$. Then $\displaystyle\sum_{k=1}^{\infty}a_{k}=\infty$.
\end{lemma}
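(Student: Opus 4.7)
The plan is to prove the contrapositive: assuming $\sum_{k=1}^\infty a_k < \infty$, I will derive a contradiction with the divisor-sum hypothesis by exploiting the fact that the divisors being summed are forced to be large. The key observation is that the condition $d \geq \log k$ means that only ``tail'' terms of the sequence $(a_n)_{n \geq 1}$ contribute to the inner sum, and the tail of a convergent series vanishes.

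Concretely, suppose for contradiction that $S := \sum_{n=1}^\infty a_n < \infty$. Since the terms are non-negative, the tail sums $T(M) := \sum_{n \geq M} a_n$ tend to $0$ as $M \to \infty$. Choose $M = M(\epsilon)$ large enough that $T(M) < \epsilon$. Since $A$ is an infinite subset of $\mathbb{N}$, it contains an element $k_0$ with $k_0 \geq e^M$, and hence $\log k_0 \geq M$ (recalling the convention $\log x = \max(1,\log x)$ from Section~\ref{Notation}, this remains consistent for all sufficiently large $k_0$).

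For this $k_0$, every divisor $d$ appearing in the restricted sum satisfies $d \geq \log k_0 \geq M$. Since all $a_d$ are non-negative, dropping the divisibility constraint only enlarges the sum, so
\begin{align}
\epsilon \;\leq\; \sum_{\substack{d \mid k_0 \\ d \geq \log k_0}} a_d \;\leq\; \sum_{d \geq \log k_0} a_d \;\leq\; \sum_{d \geq M} a_d \;=\; T(M) \;<\; \epsilon,
\end{align}
which is the desired contradiction. Hence $\sum_{k=1}^\infty a_k = \infty$.

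No step here presents a real obstacle; the proof is essentially a one-line pigeonhole-style argument. The only point that requires minor attention is ensuring that $k_0 \in A$ can indeed be chosen with $\log k_0 \geq M$, which is immediate from $A$ being infinite and unbounded in $\mathbb{N}$, and that the convention $\log x = \max(1,\log x)$ used throughout the paper does not spoil the implication $d \geq \log k_0 \Rightarrow d \geq M$ for large $k_0$.
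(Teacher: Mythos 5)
Your proof is correct, but it takes a genuinely different route from the paper's. The paper argues directly: it inductively extracts a subsequence $k_1 < k_2 < \cdots$ from $A$ with $k_n \geq e^{k_{n-1}}$, observes that the relevant divisors of $k_n$ (those $\geq \log k_n \geq k_{n-1}$) are strictly larger than all relevant divisors of $k_1, \ldots, k_{n-1}$ (which are at most $k_{n-1}$), and hence the blocks of indices are pairwise disjoint; each block contributes $\geq \epsilon$, so the partial sums exceed $n\epsilon$ for every $n$. You instead argue by contrapositive: if $\sum a_n < \infty$, the tail $\sum_{n \geq M} a_n$ is eventually $< \epsilon$, yet the hypothesis forces the divisor sum for any $k_0 \in A$ with $\log k_0 \geq M$ to live entirely inside that tail, giving an immediate contradiction from a single large element of $A$. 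Your version is shorter and sidesteps the bookkeeping of enumerating divisors and verifying disjointness (a point at which the paper's proof is slightly informal, since $d \geq k_1$ need not be strict when $d = k_1$ divides $k_2$); it also makes Corollary~\ref{Divergence lemma corollary} an instant byproduct, since one only needs a single sufficiently large $k_0 \in A$. What the paper's constructive version buys is an explicit quantitative lower bound on the partial sums, which is not needed for the lemma as stated.
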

\begin{proof}
    For each positive integer $N$, let $T(N) \coloneqq \#  \{d: d \mid N ,\,d \geq \log N\}$, and we enumerate the divisors as $$ \{d: d \mid N ,\,d \geq \log N\}=\{d_1^N, d_2^N, \dots, d_{T(N)}^N\},$$ where $d_{1}^{N}<d_{2}^{N}<\ldots <d_{T(N)}^{N}$.
    Since $A$ is an infinite set, there exists $k_1 \in A$ such that $\displaystyle\sum_{\substack{d\mid k_1 \\d\geq \log{k_1}}} a_d \geq \epsilon$, that is,
    \begin{align}
        a_{d_1^{k_1}}+a_{d_2^{k_1}}+\ldots+a_{d_{T(k_1)}^{k_1}}\geq \epsilon.
    \end{align}
    Now, choose $k_2 \in A$ such that $k_2 \geq e^{k_1}$ (this is possible since $A$ is unbounded). By hypothesis, we also have $\displaystyle\sum_{\substack{d\mid k_2 \\d\geq \log{k_2}}} a_d \geq \epsilon$, that is, 
    \begin{align}
        a_{d_1^{k_2}}+a_{d_2^{k_2}}+\ldots+a_{d_{T(k_2)}^{k_2}}\geq \epsilon.
    \end{align}
   Now, for each $d\mid k_2$ satisfying $d\geq \log k_2$, we have $d\geq k_1$, that is, $d > d_{i}^{k_1}$ for each $i=1,\ldots,T(k_1)$. Thus, $d_{j}^{k_2}> d_{i}^{k_1} $ for all $i \in \{ 1,\ldots,T(k_1)\}$ and $j \in \{ 1,\ldots,T(k_2)\}$.

   By induction, construct a sequence $(k_n)_{n \geq 1}$ in $A$ such that $k_n \geq e^{k_{n-1}}$ and $\displaystyle\sum_{\substack{d \mid k_n \\d\geq \log{k_n}}} a_d \geq \epsilon$, that is,
   \begin{align}
       a_{d_1^{k_n}}+a_{d_2^{k_n}}+\ldots+a_{d_{T(k_n)}^{k_n}}\geq \epsilon.
   \end{align}
By construction, we have $d_{i}^{k_{m}} < d_{j}^{k_n}$ for all $m<n$, $i \in \{ 1,\ldots,T(k_{m})\}$, and $j \in \{ 1,\ldots,T(k_{n})\}$. Also, for fixed $N$, $d_r^N < d_s^N$ for $1 \leq r < s \leq T(N)$. Therefore, 
\begin{align}
    & \displaystyle\sum_{k=1}^{d_{T(k_n)}^{k_n}}a_{k}\geq  (a_{d_1^{k_1}}+\ldots+a_{d_{T(k_1)}^{k_1}})+(a_{d_1^{k_2}}+\ldots+a_{d_{T(k_2)}^{k_2}})+\ldots+(a_{d_1^{k_n}}+\ldots+a_{d_{T(k_n)}^{k_n}})
    \geq  n\epsilon
\end{align}
for each $n \in \mathbb{N}$. Hence, $\displaystyle\sum_{k=1}^{\infty}a_k=\infty$.
\end{proof}
\begin{cor}\label{Divergence lemma corollary}
    Let $(a_n)_{n \geq 1}$ be a sequence of non-negative real numbers and $A$ be an infinite subset of $\mathbb{N}$ such that for all sufficiently large $k \in A$, $\displaystyle\sum_{\substack{d|k\\d\geq \log{k}}} a_d \geq \epsilon$ for some fixed $\epsilon >0$. Then $\displaystyle\sum_{k=1}^{\infty}a_{k}=\infty$.
\end{cor}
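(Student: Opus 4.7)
The plan is to deduce Corollary~\ref{Divergence lemma corollary} from Lemma~\ref{Divergence lemma} by passing to a cofinite subset of $A$ on which the full hypothesis of the lemma holds for \emph{every} element.

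More precisely, by assumption there exists some threshold $K_{0} \in \mathbb{N}$ such that
\begin{align}
\sum_{\substack{d \mid k \\ d \geq \log k}} a_{d} \geq \epsilon \qquad \text{for every } k \in A \text{ with } k \geq K_{0}.
\end{align}
Set $A' = \{k \in A : k \geq K_{0}\}$. Since $A$ is infinite and we are only removing finitely many elements (namely those less than $K_{0}$), the set $A'$ is again an infinite subset of $\mathbb{N}$. By construction, the sequence $(a_{n})_{n \geq 1}$ together with the infinite set $A'$ satisfies the hypothesis of Lemma~\ref{Divergence lemma} verbatim: for \emph{each} $k \in A'$, we have $\sum_{d \mid k,\, d \geq \log k} a_{d} \geq \epsilon$.

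Applying Lemma~\ref{Divergence lemma} to $(a_{n})_{n \geq 1}$ and $A'$ then yields $\sum_{k=1}^{\infty} a_{k} = \infty$, as required. There is no real obstacle here; the content of the corollary is simply that the divergence of a series is a tail property, so weakening ``for each $k \in A$'' to ``for all sufficiently large $k \in A$'' costs nothing once one restricts to the cofinite tail of $A$.
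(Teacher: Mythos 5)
Your proof is correct, and it is in fact a cleaner route than the one taken in the paper. The paper proves the corollary by re-entering the proof of Lemma~\ref{Divergence lemma} and modifying the construction there: it picks the starting point $k_{1}$ of the sequence $(k_n)_{n\geq 1}$ to be large enough that the hypothesis holds, and then argues that the recursive choice $k_n \geq e^{k_{n-1}}$ keeps all subsequent terms large, so ``the remainder of the proof proceeds identically.'' You instead treat Lemma~\ref{Divergence lemma} as a black box and apply it to the restricted infinite set $A' = \{k \in A : k \geq K_0\}$, for which the hypothesis of the lemma holds verbatim. Both arguments rest on the same observation, namely that only a cofinite tail of $A$ matters; but your version avoids reopening the lemma's proof and is therefore the more modular derivation. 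There is no gap.
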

\begin{proof}
The result follows from Lemma~\ref{Divergence lemma} with a minor adjustment to the construction. Since the hypothesis applies to all sufficiently large $k \in A$, we choose $k_1$ large enough to satisfy the condition. For $n \geq 2$, we select $k_n \in A$ such that $k_n \geq e^{k_{n-1}}$. This ensures that all $k_n$ are sufficiently large, as $k_n > k_{n-1}$ for all $n$. The remainder of the proof proceeds identically.
\end{proof}
\begin{lemma}\label{Euler phi bound}
Let $k \in \mathbb{N}$ and let $\varphi$ be Euler's totient function.
\begin{enumerate}
    \item $\displaystyle\sum_{d \mid k} \varphi(d) = k$.
    \item For all sufficiently large $k$, that is, for $k \gg 1$, 
$ \displaystyle\sum_{\substack{d \mid k \\ d < \log k}} \varphi(d) < \frac{k}{2}$.
\end{enumerate}
\end{lemma}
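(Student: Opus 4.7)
The proof is elementary and splits cleanly along the two parts.

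For part (1), the plan is to invoke the classical divisor-sum identity. I would partition the set $\{1, 2, \ldots, k\}$ according to the value $\gcd(n, k)$. For each divisor $d \mid k$, the integers $n$ with $1 \leq n \leq k$ and $\gcd(n, k) = d$ are precisely those of the form $n = d m$ with $1 \leq m \leq k/d$ and $\gcd(m, k/d) = 1$, and there are exactly $\varphi(k/d)$ such integers. Summing over all divisors gives
\begin{align}
k = \sum_{d \mid k} \varphi(k/d) = \sum_{d \mid k} \varphi(d),
\end{align}
where the last equality uses the bijection $d \mapsto k/d$ on the set of divisors of $k$.

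For part (2), I would use only the crude upper bound $\varphi(d) \leq d$ together with the observation that any divisor $d$ of $k$ satisfying $d < \log k$ lies in $\{1, 2, \ldots, \lfloor \log k \rfloor\}$. Thus
\begin{align}
\sum_{\substack{d \mid k \\ d < \log k}} \varphi(d) \leq \sum_{\substack{d \mid k \\ d < \log k}} d \leq \sum_{d=1}^{\lfloor \log k \rfloor} d \leq \frac{(\log k)^2}{2}.
\end{align}
Since $(\log k)^2 = o(k)$ as $k \to \infty$, there exists $k_0$ such that $(\log k)^2 < k$ for all $k \geq k_0$, giving the desired inequality $\sum_{d \mid k,\, d < \log k} \varphi(d) < k/2$ for all sufficiently large $k$.

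Neither part presents a genuine obstacle: (1) is a textbook identity, and (2) is a comparison of $(\log k)^2$ against $k$ using only the trivial bound $\varphi(d) \leq d$. The only mild subtlety is ensuring that the divisor condition is not needed in the second bound — I discard it and simply sum $d$ over all integers up to $\lfloor \log k \rfloor$, which is why the trivial bound suffices.
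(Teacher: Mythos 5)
Your proposal is correct and follows essentially the same route as the paper: part (1) is the textbook partition-by-$\gcd$ identity (which the paper simply cites as standard), and part (2) uses $\varphi(d)\leq d$ and bounds the sum over divisors below $\log k$ by roughly $(\log k)^2$, which is $o(k)$. The only cosmetic difference is that you sum $d$ over all integers up to $\lfloor\log k\rfloor$ while the paper bounds each term by $\log k$ and the number of terms by $\log k$; note that your stated constant $\frac{(\log k)^2}{2}$ should more safely be $(\log k)^2$, since $\sum_{d\leq\lfloor\log k\rfloor}d\leq\frac{\log k(\log k+1)}{2}$, but this is immaterial to the conclusion.
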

\begin{proof}
\leavevmode 
We only prove $(2)$, as $(1)$ is a standard fact in number theory.
Since $\varphi(d) \leq d$ for all $d \in \mathbb{N}$, we have
\begin{align}
\sum_{\substack{d \mid k \\ d < \log k}} \varphi(d) 
\leq \sum_{\substack{d \mid k \\ d < \log k}} d 
\leq \sum_{\substack{d \mid k \\ d < \log k}} \log k
\leq (\log k)^2< \frac{k}{2}.
\end{align}
The last inequality holds for all sufficiently large $k$, which concludes the proof.
\end{proof}
\begin{lemma}\label{lim sup positive}
    Let $(a_{n})_{n \geq 1}$ be a sequence of positive real numbers such that $\displaystyle\inf_{1\leq m \neq n\leq N}|a_{m}-a_{n}|\geq 1$, and $(\eta(N))_{N \geq 1}$ be a sequence of non-negative reals such that \( \displaystyle\limsup_{k \to \infty} \eta(k) > 0 \). Then, the inequality \( \| \alpha\lfloor |a_m - a_n| \rfloor  \| \leq \eta(N) \) has infinitely many solutions for almost all \( \alpha \in [0,1] \).
\end{lemma}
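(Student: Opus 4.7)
My plan is to reduce the problem to the fact that $\lfloor\delta_{\min}^{\alpha}\rfloor(N)\to 0$ as $N\to\infty$ for almost every $\alpha$, which is an immediate consequence of Theorem~\ref{upper bound in terms of H_N for sufficiently large N}. Since $\limsup_{k\to\infty}\eta(k)>0$, I would first extract a constant $c>0$ and an infinite set $S\subseteq\mathbb{N}$ with $\eta(N)>c$ for every $N\in S$. If $c\geq 1/2$, the conclusion is automatic because $\|x\|\leq 1/2$ for every real $x$, so any pair $m\neq n$ with $m,n\leq N$ and $N\in S$ (with $N\geq 2$) satisfies $\|\alpha\lfloor|a_m-a_n|\rfloor\|\leq 1/2<\eta(N)$, and the infinitude of $S$ produces infinitely many solutions. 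I therefore assume $c<1/2$ from now on.

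Next I would apply Theorem~\ref{upper bound in terms of H_N for sufficiently large N} with, say, $\epsilon=1/2$: for almost every $\alpha\in[0,1]$,
\[
\lfloor\delta_{\min}^{\alpha}\rfloor(N)\leq \frac{N^{1/2}}{H_N} \qquad \text{for all sufficiently large } N.
\]
The lower bound $H_{N}\geq \lceil(N-1)/2\rceil$ obtained by iterating the recursion in Lemma~\ref{H_N bound for general sequence} then gives $\lfloor\delta_{\min}^{\alpha}\rfloor(N)\leq 4/N^{1/2}\to 0$. Consequently, for almost every $\alpha$ there exists $N_0(\alpha)$ such that $\lfloor\delta_{\min}^{\alpha}\rfloor(N)<c$ whenever $N\geq N_0(\alpha)$.

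For each $N\in S$ with $N\geq N_0(\alpha)$, the definition of $\lfloor\delta_{\min}^{\alpha}\rfloor(N)$ yields some pair $m_N\neq n_N$ in $\{1,\dots,N\}$ satisfying
\[
\|\alpha\lfloor|a_{m_N}-a_{n_N}|\rfloor\|\leq \lfloor\delta_{\min}^{\alpha}\rfloor(N)<c<\eta(N).
\]
Since $S$ is infinite, this produces infinitely many triples $(m_N,n_N,N)$, completing the proof. I do not anticipate any real obstacle: the argument is essentially the observation that the typical minimal gap decays to zero for almost every $\alpha$, which eventually dominates any threshold sequence whose $\limsup$ is positive; once one has Theorem~\ref{upper bound in terms of H_N for sufficiently large N} and the $H_N\gg N$ bound, the conclusion is only a few lines.
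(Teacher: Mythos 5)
Your argument is correct, but it takes a genuinely different route from the paper's. You reduce the lemma to the observation that $\lfloor\delta_{\min}^{\alpha}\rfloor(N)\to 0$ almost surely, which you obtain by combining Theorem~\ref{upper bound in terms of H_N for sufficiently large N} with the linear lower bound $H_N\gg N$ from Lemma~\ref{H_N bound for general sequence}. This is logically sound (Theorem~\ref{upper bound in terms of H_N for sufficiently large N} is established in Section~\ref{Subsection:upper bound in terms of H_N for sufficiently large N}, independently of this lemma, so there is no circularity), and the case split at $c\geq 1/2$ is harmless though not actually needed, since the decay bound $\lfloor\delta_{\min}^{\alpha}\rfloor(N)\ll N^{-1/2}$ eventually drops below any fixed $c>0$ anyway. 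The paper instead avoids Theorem~\ref{upper bound in terms of H_N for sufficiently large N} entirely and appeals to the much lighter tool of Weyl's equidistribution theorem (Lemma~\ref{Weyl equidistribution}): since the enumeration $(Z_n)_{n\geq 1}$ of $\bigcup_N\lfloor(A_N-A_N)^+\rfloor$ consists of distinct positive integers, $(\alpha Z_n)$ is uniformly distributed mod $1$ for almost all $\alpha$, so there are infinitely many $k$ with $\|\alpha Z_k\|\leq u^*/2<\eta(N)$ along the infinite set where $\eta$ exceeds $u^*/2$, and one then matches indices $k$ to admissible $N$ using $H_N\gg N$. The trade-off is clear: your proof is shorter and exploits a black-box theorem already in the paper, but that theorem itself rests on GCD-sum estimates, the variance computation, and Rudnick's $L^2$ machinery; the paper's proof is more elementary and self-contained, invoking only a classical equidistribution result and elementary counting. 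Both are valid; the paper's choice keeps the logical dependency graph flatter and makes the lemma genuinely easier than the theorems around it.
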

\begin{proof}
    Let $\displaystyle\limsup_{N \to \infty} \eta(N) = u^* > 0$. Then, there exist infinitely many $N$ such that $\eta(N) > \frac{u^*}{2}$.

Let $A = \displaystyle\bigcup_{N \geq 1} \lfloor(A_N - A_N)^+ \rfloor $. The spacing condition and Lemma~\ref{H_N bound for general sequence} ensure that
\begin{align}
\# A \geq \# \lfloor (A_{N+1} - A_{N+1})^{+} \rfloor = H_{N+1} \geq \frac{N}{2}
\end{align}
for each \( N \in \mathbb{N} \). This shows that \( A \) is an infinite set of positive integers.

    Therefore, $A$ can be enumerated as $\{Z_1, Z_2, \dots\}$, ensuring that for each $N$,
    \begin{align}
            \{Z_n : 1 \leq n \leq H_N\} = \lfloor(A_N - A_N)^+ \rfloor.
    \end{align}
    Since $(Z_n)_{n \geq 1}$ is a sequence of distinct integers, Weyl's equidistribution theorem (see Lemma~\ref{Weyl equidistribution}) implies that for almost every $\alpha $, the sequence $(\alpha Z_n)_{n \geq 1}$ is uniformly distributed mod $1$. Hence, for any $\delta > 0$,
    \begin{align}
    \lim_{N \to \infty} \frac{1}{N} \#\left\{k \leq N : \{\alpha Z_k\} \in [0, \delta]\right\} = \delta.
    \end{align}
    In particular, taking $\delta = \frac{u^*}{2}$, for almost every $\alpha$, there exist infinitely many $k$ such that $\{\alpha Z_k\} \leq \frac{u^*}{2} $, that is, $\|\alpha Z_k\| \leq \frac{u^*}{2} $.

  Since $\eta(N) > \frac{u^*}{2}$ for infinitely many $N$, there exist sequences of distinct integers $(k(t))_{t \geq 1}$ and $(N(s))_{s \geq 1}$ such that
    \begin{align}
    \|\alpha Z_{k(t)}\| \leq \frac{u^*}{2}<\eta(N(s))
    \end{align}
    for almost every $\alpha$.  Since $(N(s))_{s\geq 1}$ is unbounded, there is a sequence of distinct positive integers $(r(t))_{t \geq 1}$ such that $ N(r(t))> 2k(t)+1$.
    By Lemma~\ref{H_N bound for general sequence}, $ H_{N(r(t))} \geq \frac{N(r(t))-1}{2}>k(t)$. Therefore, for each $t\in \mathbb{N}$,
    \begin{align}
        \delta^{\alpha}_{\min}(N(r(t))) = \min_{1 \leq m \ne n \leq N(r(t))} \|\alpha \lfloor |a_m - a_n| \rfloor\| = \min_{l \leq H_{N(r(t))}} \|\alpha Z_l\| \leq \|\alpha Z_{k(t)}\| \leq \frac{u^*}{2}<\eta(N(r(t))).
    \end{align}
    Since $(r(t))_{t \geq 1}$ and $(N(s))_{s\geq 1}$ are sequences of distinct integers, we have $(N(r(t))_{t \geq 1}$ is a sequence of distinct integers. Therefore, for almost every $\alpha$, there are infinitely many $N$ such that
    \begin{align}
    \delta^{\alpha}_{\min}(N) \leq \eta(N).
    \end{align}
\end{proof}
\begin{remark}
    Since a sequence of distinct integers has an increasing subsequence, by passing to one such subsequence $(Z_{l(n)})_{n \geq 1}$ of $(Z_n)_{n \geq 1}$ and applying Lemma~\ref{Satz 21}, we find that $(\alpha Z_{l(n)})_{n \geq 1}$ is uniformly distributed for almost all $\alpha \in [0,1)$. Hence, Lemma~\ref{Satz 21} can be used in place of Lemma~\ref{Weyl equidistribution} in the proof of Lemma~\ref{lim sup positive}, as noted in \cite{ABM}.
    \end{remark}    
\subsection*{Proof of Theorem~\ref{THM 3}}  We break the proof in two cases, depending on whether $\displaystyle\limsup _{k \rightarrow \infty} \eta(k)>0$ or equals $0$, where $(\eta(k))_{k \geq 1}$ is the given sequence of non-negative reals. 
    
\textbf{Case 1:} Let \( \displaystyle\limsup_{k \to \infty} \eta(k) =u^{*}> 0 \). We will show that the series \eqref{series conv div} diverges and that \( \lambda(\mathcal{A}) = 1 \). By Lemma~\ref{lim sup positive}, inequality \( \| \alpha\lfloor |a_m - a_n| \rfloor  \| \leq \eta(N) \) has infinitely many solutions for almost all \( \alpha \in [0,1] \). Consequently, \( \lambda(\mathcal{A}) = 1 \).

    We now show that the series \eqref{series conv div} is indeed divergent in this case. Let $A=\displaystyle\bigcup_{N \geq 1}\lfloor\left(A_{N}-A_{N}\right)^{+}\rfloor $ and fix a number $k \in A$. Then we have $\mathcal{N}(k)<\infty$, where $\mathcal{N}(k)$ is defined in Section~\ref{Notation}. An application of Lemma~\ref{Euler phi bound} yields
\begin{align}\label{series ad}
\sum_{\substack{d \mid k\\ d \geq \log k}} \varphi(d) \sup _{b \geq 1}\left\{\frac{\sup _{\ell \geq \mathcal{N}(b d)} \eta(\ell)}{b d}\right\} & \geq \sum_{\substack{d \mid k\\ d \geq \log k}} \varphi(d) \frac{\sup _{\ell \geq \mathcal{N}(k)} \eta(\ell)}{k} \\
& =\frac{\sup _{\ell \geq \mathcal{N}(k)} \eta(\ell)}{k}\left(\sum_{d \mid k} \varphi(d)-\sum_{\substack{d \mid k\\ d<\log k}} \varphi(d)\right) \\
& \geq \frac{\sup _{\ell \geq \mathcal{N}(k)} \eta(\ell)}{2} \geq \frac{u^{*}}{2}>0,
\end{align}
for sufficiently large $k \in A$, where $\mathcal{N}(k)<\infty$ ensures that the supremum is finite (condition $\mathcal{N}(k)<\infty$ ensures that we are not taking a supremum over an empty set, in which case it would be $0$, by convention). Since $A$ is an infinite set, we use \eqref{series ad} and Corollary~\ref{Divergence lemma corollary} to deduce that \eqref{series conv div} is indeed divergent in this case.

\textbf{Case 2:} Let \(\displaystyle \limsup_{k \to \infty} \eta(k) = 0 \). Since $(\eta(k))_{k \geq 1}$ is a sequence of non-negative reals, we have $\eta(N) \rightarrow 0$. We set
$$\psi(k)=\begin{cases}
    \displaystyle\sup _{\ell \geq \mathcal{N}(k)} \eta(\ell)  &\quad\text{if }k \in A, \\        0 &\quad\text{otherwise.} \\ 
\end{cases}$$
For every $k \in A$, we set
\begin{align}
S_{k}=[0,1] \cap\left(\bigcup_{0 \leq a \leq k}\left(\frac{a}{k}-\frac{\psi(k)}{k}, \frac{a}{k}+\frac{\psi(k)}{k}\right)\right),
\end{align}
and set $S_{k}=\emptyset$ for $k \notin A$. Let $$\mathcal{A}=\{ \alpha \in [0,1]: \lfloor \delta_{\min}^{\alpha}\rfloor (N) \leq \eta(N) \text{ for infinitely many }N\}$$
be defined as in the statement of the theorem, and $$\mathcal{B}=\{ \alpha \in [0,1]:  \alpha \in S_k\text{ for infinitely many }k\}.$$

To complete the proof of Theorem~\ref{THM 3}, we will show that $\mathcal{A}=\mathcal{B}$.
By Lemma~\ref{Catlin Conjecture}, we have $\lambda(\mathcal{B})=0$ or $\lambda(\mathcal{B})=1$, according to whether the series
\begin{align}\label{two series}
   \sum_{k=1}^{\infty} \varphi(k) \sup _{b \geq 1}\left\{\frac{\psi(b k)}{b k}\right\}=\sum_{k=1}^{\infty} \varphi(k) \sup _{b \geq 1}\left\{\frac{\sup _{\ell \geq \mathcal{N}(b k)} \eta(\ell)}{b k}\right\} 
\end{align}
converges or diverges, respectively. Note that $\mathcal{N}(k)=\infty$ for $k \notin A$, and the supremum over the empty set is zero by convention. Since $\eta(\ell) \geq 0$, both expressions 
\begin{align}
    \displaystyle\sup _{b \geq 1}\left\{\frac{\psi(b k)}{b k}\right\} \quad \text{and} \quad \displaystyle\sup _{b \geq 1}\left\{\frac{\sup _{\ell \geq \mathcal{N}(b k)} \eta(\ell)}{b k}\right\} 
\end{align}
coincide with $\displaystyle\sup_{\substack{b \geq 1\\bk \in A}} \left( \frac{\sup_{\ell \geq \mathcal{N}(bk)}\eta(\ell)}{bk}   \right) $ for all $k \in \mathbb{N}$. Therefore, the two series in \eqref{two series} are equal.

To show $\mathcal{A} = \mathcal{B}$, first assume that $\alpha \in \mathcal{A}$. Then, there are infinitely many values of $N, m, n$ with $1 \leq m \neq n \leq N$ such that
$    \| \alpha \lfloor |a_{m}-a_{n}| \rfloor \| \leq \eta(N)$.
Set $k=\lfloor |a_{m}-a_{n}| \rfloor$. Since $m, n \leq N$, we have $\mathcal{N}(k) \leq N$. Thus, we have $\psi(k) = \sup_{\ell \geq \mathcal{N}(k)} \eta(\ell) \geq \eta(N)$.
Consequently, there are infinitely many values of $k, N $ such that $ \|k \alpha\| \leq \eta(N)$, that is,  $\|k \alpha\| \leq \psi(k)$, and thus $\alpha \in S_{k}$. Since $\eta(N) \rightarrow 0$, a particular difference $k=\lfloor |a_{m}-a_{n}|\rfloor$ can generate only finitely many values $N$ (together with $m, n \leq N)$ such that $   \| \lfloor |a_{m}-a_{n}| \rfloor \alpha\| \leq \eta(N)$. Since each $k$ corresponds to finitely many $N$, and there are infinitely many values of $k, N$ such that $ \|k \alpha\| \leq \eta(N)$, the choices for $k$ satisfying $ \|k \alpha\| \leq \eta(N)$ must be infinite. In other words, there are infinitely many distinct values of $k$ such that $\alpha \in S_k$, and therefore $\mathcal{A} \subseteq \mathcal{B}$.

We now assume that $\alpha \notin \mathcal{A}$. Then, there are only finitely many $N$ such that $\lfloor \delta_{\min}^{\alpha}\rfloor (N) \leq \eta(N)$, that is, there are only finitely many $N$ and $m, n$ with $1 \leq m \neq n \leq N$ such that $    \| \alpha \lfloor |a_{m}-a_{n}| \rfloor \|\leq \eta(N)$. This implies that there are only finitely many $k, N  \in \mathbb{N}$ such that $k \in \lfloor ( A_{N}-A_{N})^{+}\rfloor $ and $\| k\alpha \| \leq \eta(N)$. Since $\eta(N) \rightarrow 0$, there are only finitely many $k$, and for each such $k$, there are only finitely many $\ell \geq \mathcal{N}(k)$ satisfying $\|k \alpha \| \leq \eta (l)$. Therefore, there are only finitely many $k$ for which $\|k \alpha\| \leq \sup _{l \geq \mathcal{N}(k)}\eta (l)$, that is, $\| k \alpha \| \leq \psi(k)$. Thus, there are only finitely many $k$ such that $\alpha \in S_k$, which shows that $\alpha \notin \mathcal{B}$.

Thus, we have shown that $\mathcal{A}=\mathcal{B}$. The convergence or divergence of \eqref{series conv div} provides a zero-one law for $\lambda(\mathcal{B})$, and hence also for $\lambda(\mathcal{A})$, as stated in Theorem~\ref{THM 3}.

\subsection*{Acknowledgements}
The author is grateful to Christoph Aistleitner and Sudhir Pujahari for thoughtful inputs and correspondence. The author is supported by the Department of Atomic Energy (DAE project number RIN-4001).

\bibliographystyle{plain}
\bibliography{minimal}

@article {Rudnick,
    AUTHOR = {Rudnick, Ze\'ev},
     TITLE = {A metric theory of minimal gaps},
   JOURNAL = {Mathematika},
  FJOURNAL = {Mathematika. A Journal of Pure and Applied Mathematics},
    VOLUME = {64},
      YEAR = {2018},
    NUMBER = {3},
     PAGES = {628--636},
      ISSN = {0025-5793,2041-7942},
   MRCLASS = {11K06},
  MRNUMBER = {3810030},
MRREVIEWER = {Florian\ Pausinger},
       DOI = {10.1112/S0025579318000165},
       URL = {https://doi.org/10.1112/S0025579318000165},
}

@article {ABM,
    AUTHOR = {Aistleitner, Christoph and El-Baz, Daniel and Munsch, Marc},
     TITLE = {Difference sets and the metric theory of small gaps},
   JOURNAL = {Int. Math. Res. Not. IMRN},
  FJOURNAL = {International Mathematics Research Notices. IMRN},
    VOLUME = {2023},
      YEAR = {2023},
    NUMBER = {5},
     PAGES = {3848--3884},
      ISSN = {1073-7928,1687-0247},
   MRCLASS = {11J71 (11B05 11J83 11K06 11K38 11M06)},
  MRNUMBER = {4565657},
MRREVIEWER = {Aled\ Walker},
       DOI = {10.1093/imrn/rnab354},
       URL = {https://doi.org/10.1093/imrn/rnab354},
}

@article {Regavim,
    AUTHOR = {Regavim, Shvo},
     TITLE = {Minimal gaps and additive energy in real-valued sequences},
   JOURNAL = {Q. J. Math.},
  FJOURNAL = {The Quarterly Journal of Mathematics},
    VOLUME = {74},
      YEAR = {2023},
    NUMBER = {3},
     PAGES = {825--866},
      ISSN = {0033-5606,1464-3847},
   MRCLASS = {11J54 (11B05 11J71)},
  MRNUMBER = {4642240},
MRREVIEWER = {Florian\ Pausinger},
       DOI = {10.1093/qmath/haac043},
       URL = {https://doi.org/10.1093/qmath/haac043},
}

@book {Hardy,
    AUTHOR = {Hardy, G. H. and Wright, E. M.},
     TITLE = {An introduction to the theory of numbers},
   EDITION = {Fifth},
 PUBLISHER = {The Clarendon Press, Oxford University Press, New York},
      YEAR = {1979},
     PAGES = {xvi+426},
      ISBN = {0-19-853170-2},
   MRCLASS = {10-01},
  MRNUMBER = {568909},
MRREVIEWER = {T.\ M.\ Apostol},
}

@article {pollingtonvaughan1990,
    AUTHOR = {Pollington, A. D. and Vaughan, R. C.},
     TITLE = {The {$k$}-dimensional {D}uffin and {S}chaeffer conjecture},
   JOURNAL = {Mathematika},
  FJOURNAL = {Mathematika. A Journal of Pure and Applied Mathematics},
    VOLUME = {37},
      YEAR = {1990},
    NUMBER = {2},
     PAGES = {190--200},
      ISSN = {0025-5793},
   MRCLASS = {11K60 (11J13)},
  MRNUMBER = {1099767},
MRREVIEWER = {P.\ Bundschuh},
       DOI = {10.1112/S0025579300012900},
       URL = {https://doi.org/10.1112/S0025579300012900},
}

@book {chandra2012borel,
    AUTHOR = {Chandra, Tapas Kumar},
     TITLE = {The {B}orel-{C}antelli lemma},
    SERIES = {SpringerBriefs in Statistics},
 PUBLISHER = {Springer, Heidelberg},
      YEAR = {2012},
     PAGES = {xii+106},
      ISBN = {978-81-322-0676-7},
   MRCLASS = {60F15 (60E15 60F20)},
  MRNUMBER = {2953518},
MRREVIEWER = {Przemys\l aw\ Matu\l a},
       DOI = {10.1007/978-81-322-0677-4},
       URL = {https://doi.org/10.1007/978-81-322-0677-4},
}

@article {Cassels_1950,
    AUTHOR = {Cassels, J. W. S.},
     TITLE = {Some metrical theorems in {D}iophantine approximation. {I}},
   JOURNAL = {Proc. Cambridge Philos. Soc.},
  FJOURNAL = {Proceedings of the Cambridge Philosophical Society},
    VOLUME = {46},
      YEAR = {1950},
     PAGES = {209--218},
      ISSN = {0008-1981},
   MRCLASS = {10.0X},
  MRNUMBER = {36787},
MRREVIEWER = {P.\ Erd\H os},
       DOI = {10.1017/s0305004100025676},
       URL = {https://doi.org/10.1017/s0305004100025676},
}

@article {KoukouMaynard,
    AUTHOR = {Koukoulopoulos, Dimitris and Maynard, James},
     TITLE = {On the {D}uffin-{S}chaeffer conjecture},
   JOURNAL = {Ann. of Math. (2)},
  FJOURNAL = {Annals of Mathematics. Second Series},
    VOLUME = {192},
      YEAR = {2020},
    NUMBER = {1},
     PAGES = {251--307},
      ISSN = {0003-486X,1939-8980},
   MRCLASS = {11J83 (05C40)},
  MRNUMBER = {4125453},
MRREVIEWER = {Sam\ Chow},
       DOI = {10.4007/annals.2020.192.1.5},
       URL = {https://doi.org/10.4007/annals.2020.192.1.5},
}

@article {CATLIN1976,
    AUTHOR = {Catlin, Paul A.},
     TITLE = {Two problems in metric {D}iophantine approximation. {II}},
   JOURNAL = {J. Number Theory},
  FJOURNAL = {Journal of Number Theory},
    VOLUME = {8},
      YEAR = {1976},
    NUMBER = {3},
     PAGES = {289--297},
      ISSN = {0022-314X,1096-1658},
   MRCLASS = {10K15},
  MRNUMBER = {417099},
MRREVIEWER = {F.\ Schweiger},
       DOI = {10.1016/0022-314X(76)90007-X},
       URL = {https://doi.org/10.1016/0022-314X(76)90007-X},
}

@article {Weyl1916,
    AUTHOR = {Weyl, Hermann},
     TITLE = {\"Uber die {G}leichverteilung von {Z}ahlen mod. {E}ins},
   JOURNAL = {Math. Ann.},
  FJOURNAL = {Mathematische Annalen},
    VOLUME = {77},
      YEAR = {1916},
    NUMBER = {3},
     PAGES = {313--352},
      ISSN = {0025-5831,1432-1807},
   MRCLASS = {99-04},
  MRNUMBER = {1511862},
       DOI = {10.1007/BF01475864},
       URL = {https://doi.org/10.1007/BF01475864},
}

@book {kuipers1974uniform,
    AUTHOR = {Kuipers, L. and Niederreiter, H.},
     TITLE = {Uniform distribution of sequences},
    SERIES = {Pure and Applied Mathematics},
 PUBLISHER = {Wiley-Interscience [John Wiley \& Sons], New
              York-London-Sydney},
      YEAR = {1974},
     PAGES = {xiv+390},
   MRCLASS = {10K05 (22D99)},
  MRNUMBER = {419394},
MRREVIEWER = {P.\ Gerl},
}

@book {MichaelTichy,
    AUTHOR = {Drmota, Michael and Tichy, Robert F.},
     TITLE = {Sequences, discrepancies and applications},
    SERIES = {Lecture Notes in Mathematics},
    VOLUME = {1651},
 PUBLISHER = {Springer-Verlag, Berlin},
      YEAR = {1997},
     PAGES = {xiv+503},
      ISBN = {3-540-62606-9},
   MRCLASS = {11Kxx (11K06 11K38)},
  MRNUMBER = {1470456},
MRREVIEWER = {Oto\ Strauch},
       DOI = {10.1007/BFb0093404},
       URL = {https://doi.org/10.1007/BFb0093404},
}

@book {Harman1998,
    AUTHOR = {Harman, Glyn},
     TITLE = {Metric number theory},
    SERIES = {London Mathematical Society Monographs. New Series},
    VOLUME = {18},
 PUBLISHER = {The Clarendon Press, Oxford University Press, New York},
      YEAR = {1998},
     PAGES = {xviii+297},
      ISBN = {0-19-850083-1},
   MRCLASS = {11J83 (11K38 11K50 11K55)},
  MRNUMBER = {1672558},
MRREVIEWER = {R.\ C.\ Baker},
}

@article {Bondarenko-Seip,
    AUTHOR = {Bondarenko, Andriy and Seip, Kristian},
     TITLE = {G{CD} sums and complete sets of square-free numbers},
   JOURNAL = {Bull. Lond. Math. Soc.},
  FJOURNAL = {Bulletin of the London Mathematical Society},
    VOLUME = {47},
      YEAR = {2015},
    NUMBER = {1},
     PAGES = {29--41},
      ISSN = {0024-6093,1469-2120},
   MRCLASS = {11C20 (42B30)},
  MRNUMBER = {3312961},
MRREVIEWER = {Mika\ Tapio\ Mattila},
       DOI = {10.1112/blms/bdu094},
       URL = {https://doi.org/10.1112/blms/bdu094},
}

@article {Duffin1941KhintchinesPI,
    AUTHOR = {Duffin, R. J. and Schaeffer, A. C.},
     TITLE = {Khintchine's problem in metric {D}iophantine approximation},
   JOURNAL = {Duke Math. J.},
  FJOURNAL = {Duke Mathematical Journal},
    VOLUME = {8},
      YEAR = {1941},
     PAGES = {243--255},
      ISSN = {0012-7094,1547-7398},
   MRCLASS = {10.0X},
  MRNUMBER = {4859},
MRREVIEWER = {D.\ C.\ Spencer},
       URL = {http://projecteuclid.org/euclid.dmj/1077492641},
}

@article {delabretèche,
    AUTHOR = {de la Bret\`eche, R\'egis and Tenenbaum, G\'erald},
     TITLE = {Sommes de {G}\'al et applications},
   JOURNAL = {Proc. Lond. Math. Soc. (3)},
  FJOURNAL = {Proceedings of the London Mathematical Society. Third Series},
    VOLUME = {119},
      YEAR = {2019},
    NUMBER = {1},
     PAGES = {104--134},
      ISSN = {0024-6115,1460-244X},
   MRCLASS = {11L40 (11A05 11M06 11N25 11N37)},
  MRNUMBER = {3957832},
MRREVIEWER = {S\'andor\ Z.\ Kiss},
       DOI = {10.1112/plms.12224},
       URL = {https://doi.org/10.1112/plms.12224},
}

@article {Rudnick-Sarnak,
    AUTHOR = {Rudnick, Ze\'ev and Sarnak, Peter},
     TITLE = {The pair correlation function of fractional parts of polynomials},
   JOURNAL = {Comm. Math. Phys.},
  FJOURNAL = {Communications in Mathematical Physics},
    VOLUME = {194},
      YEAR = {1998},
    NUMBER = {1},
     PAGES = {61--70},
      ISSN = {0010-3616,1432-0916},
   MRCLASS = {11J54 (11J71)},
  MRNUMBER = {1628282},
MRREVIEWER = {John\ B.\ Friedlander},
       DOI = {10.1007/s002200050348},
       URL = {https://doi.org/10.1007/s002200050348},
}

@article {Aistleitner-Larcher-Lewko,
    AUTHOR = {Aistleitner, Christoph and Larcher, Gerhard and Lewko, Mark},
     TITLE = {Additive energy and the {H}ausdorff dimension of the exceptional set in metric pair correlation problems},
      NOTE = {With an appendix by Jean Bourgain},
   JOURNAL = {Israel J. Math.},
  FJOURNAL = {Israel Journal of Mathematics},
    VOLUME = {222},
      YEAR = {2017},
    NUMBER = {1},
     PAGES = {463--485},
      ISSN = {0021-2172,1565-8511},
   MRCLASS = {11K06 (11K60)},
  MRNUMBER = {3736514},
MRREVIEWER = {Thomas\ Ward},
       DOI = {10.1007/s11856-017-1597-5},
       URL = {https://doi.org/10.1007/s11856-017-1597-5},
}

@article {Bloom-Chow,
    AUTHOR = {Bloom, Thomas F. and Chow, Sam and Gafni, Ayla and Walker, Aled},
     TITLE = {Additive energy and the metric {P}oissonian property},
   JOURNAL = {Mathematika},
  FJOURNAL = {Mathematika. A Journal of Pure and Applied Mathematics},
    VOLUME = {64},
      YEAR = {2018},
    NUMBER = {3},
     PAGES = {679--700},
      ISSN = {0025-5793,2041-7942},
   MRCLASS = {11J71 (05B10 11B30 11J83 60F15)},
  MRNUMBER = {3826482},
MRREVIEWER = {Anton\ Vladimirovich\ Shutov},
       DOI = {10.1112/s0025579318000207},
       URL = {https://doi.org/10.1112/s0025579318000207},
}

@article {ABM2021,
    AUTHOR = {Aistleitner, Christoph and El-Baz, Daniel and Munsch, Marc},
     TITLE = {A pair correlation problem, and counting lattice points with the zeta function},
   JOURNAL = {Geom. Funct. Anal.},
  FJOURNAL = {Geometric and Functional Analysis},
    VOLUME = {31},
      YEAR = {2021},
    NUMBER = {3},
     PAGES = {483--512},
      ISSN = {1016-443X,1420-8970},
   MRCLASS = {11K06 (11B05 11J25 11J71 11J83 11M06)},
  MRNUMBER = {4311578},
MRREVIEWER = {Kwok-Kwong\ Stephen\ Choi},
       DOI = {10.1007/s00039-021-00564-6},
       URL = {https://doi.org/10.1007/s00039-021-00564-6},
}

@article {BDV2006,
    AUTHOR = {Beresnevich, Victor and Dickinson, Detta and Velani, Sanju},
     TITLE = {Measure theoretic laws for lim sup sets},
   JOURNAL = {Mem. Amer. Math. Soc.},
  FJOURNAL = {Memoirs of the American Mathematical Society},
    VOLUME = {179},
      YEAR = {2006},
    NUMBER = {846},
     PAGES = {x+91},
      ISSN = {0065-9266,1947-6221},
   MRCLASS = {11J83 (11J13 11K60 28A78 28A80)},
  MRNUMBER = {2184760},
MRREVIEWER = {Dmitry\ Y.\ Kleinbock},
       DOI = {10.1090/memo/0846},
       URL = {https://doi.org/10.1090/memo/0846},
}

@article {BV2023,
    AUTHOR = {Beresnevich, Victor and Velani, Sanju},
     TITLE = {The divergence {B}orel-{C}antelli lemma revisited},
   JOURNAL = {J. Math. Anal. Appl.},
  FJOURNAL = {Journal of Mathematical Analysis and Applications},
    VOLUME = {519},
      YEAR = {2023},
    NUMBER = {1},
     PAGES = {Paper No. 126750, 21},
      ISSN = {0022-247X,1096-0813},
   MRCLASS = {60A10 (11J83)},
  MRNUMBER = {4497313},
MRREVIEWER = {A.\ I.\ Dale},
       DOI = {10.1016/j.jmaa.2022.126750},
       URL = {https://doi.org/10.1016/j.jmaa.2022.126750},
}

@article {BV2010,
    AUTHOR = {Beresnevich, Victor and Velani, Sanju},
     TITLE = {Classical metric {D}iophantine approximation revisited: the {K}hintchine-{G}roshev theorem},
   JOURNAL = {Int. Math. Res. Not. IMRN},
  FJOURNAL = {International Mathematics Research Notices. IMRN},
    VOLUME = {2010},
      YEAR = {2010},
    NUMBER = {1},
     PAGES = {69--86},
      ISSN = {1073-7928,1687-0247},
   MRCLASS = {11J83 (11J13 11J82)},
  MRNUMBER = {2576284},
MRREVIEWER = {St\'ephane\ Fischler},
       DOI = {10.1093/imrn/rnp119},
       URL = {https://doi.org/10.1093/imrn/rnp119},
}

@article {RudnickSarnakAlexandru2001,
    AUTHOR = {Rudnick, Ze\'ev and Sarnak, Peter and Zaharescu, Alexandru},
     TITLE = {The distribution of spacings between the fractional parts of
              {$n^2\alpha$}},
   JOURNAL = {Invent. Math.},
  FJOURNAL = {Inventiones Mathematicae},
    VOLUME = {145},
      YEAR = {2001},
    NUMBER = {1},
     PAGES = {37--57},
      ISSN = {0020-9910,1432-1297},
   MRCLASS = {11K06 (11K60 81Q50)},
  MRNUMBER = {1839285},
MRREVIEWER = {Jens\ Marklof},
       DOI = {10.1007/s002220100141},
       URL = {https://doi.org/10.1007/s002220100141},
}

@article {RudnickAlexandru1999,
    AUTHOR = {Rudnick, Ze\'ev and Zaharescu, Alexandru},
     TITLE = {A metric result on the pair correlation of fractional parts of
              sequences},
   JOURNAL = {Acta Arith.},
  FJOURNAL = {Acta Arithmetica},
    VOLUME = {89},
      YEAR = {1999},
    NUMBER = {3},
     PAGES = {283--293},
      ISSN = {0065-1036,1730-6264},
   MRCLASS = {11K41 (11K06)},
  MRNUMBER = {1691857},
MRREVIEWER = {Michael\ Drmota},
       DOI = {10.4064/aa-89-3-283-293},
       URL = {https://doi.org/10.4064/aa-89-3-283-293},
}

@article {RudnickAlexandru2002,
    AUTHOR = {Rudnick, Ze\'ev and Zaharescu, Alexandru},
     TITLE = {The distribution of spacings between fractional parts of
              lacunary sequences},
   JOURNAL = {Forum Math.},
  FJOURNAL = {Forum Mathematicum},
    VOLUME = {14},
      YEAR = {2002},
    NUMBER = {5},
     PAGES = {691--712},
      ISSN = {0933-7741,1435-5337},
   MRCLASS = {11K06},
  MRNUMBER = {1924774},
MRREVIEWER = {Jens\ Marklof},
       DOI = {10.1515/form.2002.030},
       URL = {https://doi.org/10.1515/form.2002.030},
}
\end{document}